\pgfplotsset{compat=1.17}
\def\BspN{{(B,\| \cdot \|_B)}}
\title[Measure-operator convolutions and mixed-state Gabor multipliers]{Measure-operator convolutions and\\ applications to mixed-state Gabor multipliers}
\author{Hans G. Feichtinger}
\address{Faculty of Mathematics, University of Vienna, Oskar-Morgenstern-Platz 1, A-1090 Wien, Austria and 
\newline Acoustic Research Institute, OEAW, Vienna, AUSTRIA}
\email{hans.feichtinger@univie.ac.at}
\author{Simon Halvdansson}
\author{Franz Luef}
\address{Department of Mathematical Sciences, Norwegian University of Science and Technology, 7491 Trondheim, Norway.}
\email{franz.luef@ntnu.no}
\email{simon.halvdansson@ntnu.no}
\thanks{\textit{Note: }
In the published version of this manuscript, we assumed in Section 4.3. that our trace class operator $S$ is normalized, $\tr(S)=1$, which imposes quite some restrictions on the statements in this section. In this updated version, we state and proof the results for non-normalized trace class operators. }
\theoremstyle{plain}
\newtheorem{theorem}{Theorem}[section]
\newtheorem*{theorem*}{Theorem}
\newtheorem{lemma}[theorem]{Lemma}
\newtheorem{proposition}[theorem]{Proposition}
\newtheorem{corollary}[theorem]{Corollary}
\theoremstyle{definition}
\newtheorem{definition}[theorem]{Definition}
\newtheorem{example}[theorem]{Example}
\theoremstyle{remark}
\newtheorem*{remark}{Remark}
\newcommand{\C}{\mathbb{C}}
\newcommand{\R}{\mathbb{R}}
\newcommand{\Z}{\mathbb{Z}}
\newcommand{\N}{\mathbb{N}}
\newcommand{\supp}{\operatorname{supp}}
\newcommand{\tr}{\operatorname{tr}}
\newcommand{\D}{\mathcal{D}}
\newcommand{\vast}{\bBigg@{4}}
\newcommand{\Vast}{\bBigg@{5}}
\def\XXint#1#2#3{{\setbox0=\hbox{$#1{#2#3}{\int}$ }
		\vcenter{\hbox{$#2#3$ }}\kern-.6\wd0}}
\begin{document}
	\maketitle
	\begin{abstract}\vspace{-9mm}
	For the Weyl-Heisenberg group, convolutions between functions and operators were defined by Werner as a part of a framework called quantum harmonic analysis. We show how recent results by Feichtinger can be used to extend this definition to include convolutions between measures and operators. Many properties of function-operator convolutions carry over to this setting and allow us to prove novel results on the distribution of eigenvalues of mixed-state Gabor multipliers and derive a version of the Berezin-Lieb inequality for lattices. New results on the continuity of Gabor multipliers with respect to lattice parameters, masks and windows as well as their ability to approximate localization operators are also derived using this framework.
	    \vspace{6mm}
	\end{abstract}
	
	\renewcommand{\thefootnote}{\fnsymbol{footnote}}
    \footnotetext{\emph{Keywords:} quantum harmonic analysis, operator convolutions, homogeneous Banach spaces, Gabor multipliers, Gabor frames}
    \renewcommand{\thefootnote}{\arabic{footnote}}

    \section{Introduction}
    In recent years the framework of quantum harmonic analysis, which was introduced by R.~Werner in \cite{Werner1984}, has been successfully applied to many problems in time-frequency analysis and operator theory \cite{fu20,Luef2019_acc, Luef2018, Luef2021}. The most central operations of the framework are the \emph{function-operator} and \emph{operator-operator} convolutions which generalize classical convolutions and basic objects in time-frequency analysis such as localization operators and Cohen's class distributions. In this paper, we will be focusing on the extension of the function-operator convolution, defined for $f \in L^1(\R^{2d})$ and a trace-class operator $S$ via the Bochner integral
    \begin{align}\label{eq:main_func_op_def}
        f\star S = \int_{\R^{2d}} f(z) \pi(z)S \pi(z)^* \,dz,
    \end{align}
    where $\pi(z)\psi(t) = M_\omega T_x \psi(t)  = e^{2\pi i \omega \cdot t} \psi(t-x)$ is a time-frequency shift by $z = (x,\omega) \in \R^{2d}$. 
    
    For certain applications in time-frequency analysis, one would like to generalize this definition to the case of measures. We will show that it suffices to define this convolution $\star$ for point-measures $\delta_z$: Namely,  $\delta_z \star S = \pi(z)S\pi(z)^*$, in order to extend it to the general case, which is given weakly by
    $$
    \mu \star S = \int_{\R^{2d}} \pi(z) S \pi(z)^*\,d\mu(z)
    $$
    for $S$ in the Schatten $p$-class of operators for $1 \leq p \leq \infty$. Notably, this implies \eqref{eq:main_func_op_def} when $\mu$ is absolutely continuous with respect to Lebesgue measure.
    
    Our definition of measure-operator convolution is based on the framework developed by Feichtinger in \cite{Feichtinger2017, Feichtinger2022}, which allows us to avoid the use of Bochner integrals mentioned in \cite{Berge2023} and thus several theorems from quantum harmonic analysis follow immediately from the construction, notably a version of Young's theorem and the Fourier multiplication theorem.
    
    With the relevant definitions and basic properties in place, we explore applications of measure-operator convolutions to the setting of lattices in Section \ref{sec:applications_to_lattices} where we are able to recover classical results as well as new results on the eigenvalues of mixed-state Gabor multipliers and a version of the Berezin-Lieb inequality. Here we find that the correct setting for doing quantum harmonic analysis is requiring that the window operator $S$ is compatible with the lattice $\Lambda$ in the sense that
    $$
    A\Vert \psi \Vert^2 \leq \sum_{\lambda \in \Lambda} Q_S(\psi)(\lambda) \leq B\Vert \psi \Vert^2 \qquad \text{ for all }\psi \in L^2(\R^d),
    $$
    where $Q_S$ is a Cohen's class distribution. This setting was originally explored by Skrettingland in \cite{Skrettingland2021} but the above-mentioned results are novel as is the approach to interactions with measure-operator convolutions.

    Finally, we treat the case of varying the lattice $\Lambda$ in Section \ref{sec:gabor_seq_conv}. In this setting, we are able to establish two notable results. First, we show that for a lattice $\alpha \Z^d \times \beta\Z^d$, the associated (suitably normalized) Gabor multipliers approximate localization operators in the trace-class norm under some conditions on the mask. Moreover, we are also able to establish a result on how Gabor multipliers continuously depend on the lattice parameters, the mask and the windows provided the mask convergence is in the Wiener amalgam space $W(C_0, \ell^1)(\R^{2d})$.
    
    \subsubsection*{Notation:}
    For $1 \leq p \leq \infty$, $\mathcal{S}^p$ denotes the Schatten $p$-class of operators on $L^2(\R^d)$ with singular values in $\ell^p$ where we use the convention that $\mathcal{S}^\infty = \mathcal{L}(L^2)$, the space of bounded linear operators on $L^2(\R^d)$. A check on an operator will denote conjugation by the parity operator $P(\psi)(t) = \psi(-t)$, i.e., $\check{T} = PTP$, while for measures, $\check{\mu}(E) = \mu(-E)$ where $-E = \{ -x : x \in E \}$. For a discrete set $E$, we will write $|E|$ for its cardinality and for a non-discrete set we will use the same notation $|E|$ for the Lebesgue measure. Given a set $\Omega \subset \R^{2d}$ and $R \in \R$, we will write $R\Omega$ for the dilated set $\{ R\omega : \omega \in \Omega \}$, $\Omega^c$ for the complement $\R^{2d} \setminus \Omega$ and $\chi_\Omega$ for the indicator function of the set $\Omega$. Norms $\Vert \cdot \Vert$ and inner products $\langle \cdot, \cdot \rangle$ without subscripts will always be understood to be the ones for $L^2(\R^d)$. The rank-one operator $\psi \otimes \phi$ is defined as $(\psi \otimes \phi) (f) = \langle f, \phi \rangle \psi$, for $\phi,\psi, f \in L^2(\R^d)$.  
    
    
    \section{Preliminaries}\label{sec:preliminaries}
    In this section we go over the relevant preliminaries on time-frequency analysis and quantum harmonic analysis which should be well-known to readers familiar with other works on quantum harmonic analysis such as \cite{Luef2018, Luef2021, Skrettingland2020, Luef2019_acc}. The last subsection, Section \ref{sec:prelim_convolutions} is solely based on the two papers \cite{Feichtinger2017, Feichtinger2022} by Feichtinger and the results are later used extensively in defining measure-operator convolutions in Section \ref{sec:def_prop_measure_operator_conv}.
    
    \subsection{Time-frequency analysis}
    We briefly introduce some of the main concepts of time-frequency analysis which will be used in Section \ref{sec:applications_to_lattices}. For a more thorough introduction, the reader is referred to \cite{grochenig_book}.
    
    \subsubsection{The short-time Fourier transform}
    Given a \emph{signal} $\psi \in L^2(\R^d)$ and a \emph{window} $\varphi \in L^2(\R^d)$, the \emph{short-time Fourier transform} (STFT) of $\psi$ with respect to $\varphi$ is defined on $\R^{2d}$
    $$
    V_\varphi \psi(z) = \langle \psi, \pi(z) \varphi \rangle = \int_{\R^d} \psi(t) \overline{\varphi(t-x)}e^{-2\pi i \omega \cdot t}\,dt, \quad z = (x,\omega). 
    $$
    The interpretation is that $V_\varphi\psi(z)$ measures the time-frequency content of $\psi$ at $z = (x,\omega)$ where $x$ is the time and  $\omega$ is the frequency.
    
    The classical \emph{Moyal's identity} shows how the STFT respects inner products and the $L^2$ energy of its constituents as
    \begin{align}\label{eq:moyal}
        \big\langle V_{\varphi_1}\psi_1, V_{\varphi_2}\psi_2 \big\rangle_{L^2(\R^{2d})} = \langle \psi_1, \psi_2 \rangle \overline{\langle \varphi_1, \varphi_2 \rangle}.
    \end{align}
    Lastly we mention that the squared modulus of the STFT, $|V_\varphi \psi|^2$, referred to as the \emph{spectrogram}, is a real-valued byproduct of the STFT which is often used in applications and is an example of a \emph{quadratic} time-frequency distribution.
    
    \subsubsection{Cohen's class of quadratic time-frequency distributions}\label{sec:prelim_cohen_tf}
    There is a large collection of different quadratic time-frequency distributions and those which satisfy some basic desirable properties can be characterized as being of the form
    $$
    Q_\Phi(\psi, \phi) = W(\psi, \phi) * \Phi,\qquad W(\psi,\phi)(z) = \int_{\R^d} \psi(t+x/2) \overline{\phi(t-x/2)}e^{-2\pi i \omega \cdot t}\,dt,
    $$
    where $W$ is the (cross) Wigner distribution and $\Phi$ is a tempered distribution. Such bilinear distributions are said to belong to \emph{Cohen's class of quadratic time-frequency distributions}.
    
    \subsubsection{Localization operators}
    A non-trivial consequence of Moyal's identity \eqref{eq:moyal} is that any function $\psi \in L^2(\R^d)$ can be reconstructed from its STFT $V_\varphi \psi$ as a weak integral: 
    \begin{align}\label{eq:reconstruction_classic}
        \psi = \int_{\R^{2d}} V_\varphi \psi(z) \pi(z)\varphi\,dz.
    \end{align}
    Localization operators weight this reconstruction by some function $m : \R^{2d} \to \C$, sometimes taken to be an indicator function, as
    \begin{align}\label{eq:def_loc_op}
    	A_m^\varphi \psi = \int_{\R^{2d}} m(z) V_\varphi \psi(z) \pi(z)\varphi\,dz
    \end{align}  
    and we write $A_{\chi_\Omega}^\varphi = A_\Omega^\varphi$ when $m = \chi_\Omega$. The interpretation is that $A_m^\varphi \psi$ should be approximately concentrated in $\supp(m)$ in phase space. These operators have been extensively studied due to applications in physics, signal processing and the theory of pseudodifferential operators \cite{deGosson2011, Strohmer2006, Grchenig2011} and were first introduced in this form by I. Daubechies in \cite{daubechies1988_loc}.
    \subsubsection{Gabor frames}\label{sec:prelim_frame_theory}
    In applications, instead of considering the continuous function $V_\varphi\psi$ on $\R^{2d}$ we often use the discrete analogue $(V_\varphi \psi(\lambda))_{\lambda \in \Lambda}$ where $\Lambda$ is a lattice on $\R^{2d}$, i.e., $\Lambda = A\Z^{2d}, A \in GL(2d, \R)$. This makes computations feasible but we lose a few of the nice results we have established in the continuous setting. Of course, we want as many properties as possible to carry over from the continuous case to this setting. It turns out that the correct requirement for this is the \emph{frame condition}:
    \begin{align}\label{eq:frame_condition_classical}
        A\Vert \psi \Vert^2 \leq \sum_{\lambda \in \Lambda} |\langle \psi, \pi(\lambda) \varphi \rangle|^2 \leq B \Vert \psi \Vert^2 \qquad \mbox{for all }  \psi \in L^2(\R^d), 
    \end{align}
        where $A$ and $B$ are finite real constants. If \eqref{eq:frame_condition_classical} holds, the collection $\{ \pi(\lambda)\varphi \}_{\lambda \in \Lambda}$ is said to be a \emph{Gabor frame with frame bounds }$A, B$. Important special cases are tight frames, meaning that $A = B$, or equivalently that the frame operator, i.e. the mapping 
    \begin{align}\label{eq:frame_identity_reproducing}
        \psi \mapsto \sum_{\lambda \in \Lambda} V_\varphi \psi(\lambda)\pi(\lambda) \varphi
    \end{align}
    is a scalar multiple of the identity, analogous to the reconstruction formula \eqref{eq:reconstruction_classic} in the continuous case.

    \subsubsection{Gabor multipliers}
    As for  localization operators, \eqref{eq:frame_identity_reproducing} suggests inserting weights before reconstruction in the tight case. More generally, if the frame condition \eqref{eq:frame_condition_classical} is satisfied, there exists a \emph{dual window} $\gamma$ such that we have a reconstruction of $\psi$ using an unconditionally convergent series expansion in $ L^2(\R^{d})$:
    $$
    \psi = \sum_{\lambda \in \Lambda} V_\varphi \psi(\lambda) \pi(\lambda) \gamma\qquad \text{ for all }\psi \in L^2(\R^{d}).
    $$
    {\it Gabor multipliers} can then be defined with the help of a {\it mask} or {\it (upper) symbol}, i.e., a weight function $m: \Lambda \to \C$, as follows
    $$
    G^{\varphi, \gamma}_{m, \Lambda} \psi = \sum_{\lambda \in \Lambda} m(\lambda) V_\varphi \psi(\lambda) \pi(\lambda) \gamma.
    $$
    
    \subsubsection{Wiener amalgam spaces and bounded uniform partitions of unity}\label{sec:wiener_and_bupu}    
    In order to describe  the global behavior of a local property of a function  \emph{Wiener amalgam spaces} \cite{he03, fe83, fe77-3} are a natural choice. For $1 \leq p,q < \infty$ the original construction on $\R$ selects measurable functions such that the following norm
    is finite: 
    \begin{align}\label{eq:wiener_og}
        \Vert F \Vert_{W(L^p, \ell^q)} = \left(\sum_{n \in \Z} \left(\int_n^{n+1} |F(t)|^p\,dt\right)^{q/p} \right)^{1/q}.
    \end{align}
     Here $p$ controls the local behavior of the function while $q$ controls global summability. There are many generalizations of Wiener amalgam spaces. We will make use of the characterization 
     of these space (on $\R^{2d}$)  using \emph{bounded uniform partitions of unity} (BUPU's). Given an index set $I$, a BUPU is a collection  of  (continuous) functions $\Psi = \{ \psi_i \}_{i \in I}$ such that  
    \begin{enumerate}[label=(\roman*)]
        \item $\sum_{i \in I} \psi_i(x)= 1, \quad x \in \R^{2d}$, 
        \item $\sup_{i \in I} \Vert \psi_i \Vert_{L^\infty} < \infty$,
        \item There exists a compact set $U \subset \R^{2d}$ with nonempty interior and points $z_i$ such that $\supp(\psi_i) \subset U + z_i$ for all $i \in I$,
        \item For each compact $K \subset \R^{2d}$,
        $$
         \sup_{i \in I} \big|\big\{ j \in I : K + z_i \cap K + z_j \neq \emptyset \big\}\big| < \infty,
        $$
        this is sometimes called the \emph{bounded overlap property}.
    \end{enumerate} 
    A trivial construction of a BUPU is the collection of indicator functions of translated fundamental domains of a lattice $\Lambda = \alpha \Z^d \times \beta \Z^d$, $\psi_{n,k} = \chi_{[\alpha n, \alpha (n+1)) \times [\beta k, \beta (k+1))}$ or better more intricate constructions with continuous or smooth functions, e.g. B-splines. 

    For our generalization of the Wiener amalgam spaces considered above, we will consider functions on $\R^{2d}$ which are locally in some Banach space $(B,\Vert.\Vert_B)$. The norm can then be written as
    \begin{align*}
        \Vert F \Vert_{W(B, \ell^q)} = \left( \sum_{i \in I} \Vert F \psi_i \Vert_B^q \right)^{1/q}.
    \end{align*}
    A central  property of these  norms  to be used  later is the fact  that different BUPU's $\Psi$ give equivalent norms, \cite{he03}. 
    For $p= \infty$ obvious modifications to \eqref{eq:wiener_og} apply. 
    
    \subsection{Operator theory}
    The main tool from operator theory will be  the \emph{singular value decomposition} of a compact operator $A$, which can be written as
    $$
    A = \sum_n s_n(A) (\psi_n \otimes \phi_n),
    $$
    where $\{ \psi_n \}_n$ and $\{ \phi_n \}_n$ are orthonormal sequences  in $L^2(\R^d)$ and $(s_n(A))_n$ is a sequence of non-negative, decreasing numbers, the \emph{singular values}, converging to zero. When the operator $A$ is self-adjoint, we can take $\phi_n = \psi_n$ and the singular values are the eigenvalues of $A$ (up to a sign). Otherwise they are just the eigenvalues of the positive operator $|A| = \sqrt{A^*A}$.
    
    The Schatten $p$-class of operators for $p < \infty$ is the Banach space of compact operators with singular values in $\ell^p$. In many ways these spaces behave as the $L^p$ spaces with the \emph{trace}, defined as $\tr(A) = \sum_n \langle Ae_n, e_n\rangle$ for some orthonormal basis $\{ e_n \}_n$ of $L^2(\R^d)$, replacing the integral so that $\Vert A \Vert_{\mathcal{S}^p} = \tr(|A|^p)^{1/p}$. These norms are monotonic in $p$ in the sense that if $1 \leq p \leq  p' \leq \infty$,
    \begin{align*}
        \Vert S \Vert_{\mathcal{S}^\infty} \leq \Vert S \Vert_{\mathcal{S}^{p'}} \leq \Vert S \Vert_{\mathcal{S}^p} \leq \Vert S \Vert_{\mathcal{S}^1}.
    \end{align*}
    The cases $p=1,2$ are of special relevance with the $\mathcal{S}^1$ operators being referred to as \emph{trace-class} and $\mathcal{S}^2$ being the \emph{Hilbert-Schmidt} operators. The $\mathcal{S}^p$ - $\mathcal{S}^q$ (sesquilinear) duality brackets for the Schatten classes may be given by
    \begin{equation}
    \langle A, B \rangle_{\mathcal{S}^p, \mathcal{S}^q} = \tr(AB^*).    
    \end{equation}
    For more on this class of operators, see \cite{Simon2005}. 
    By this duality the bounded linear operators form the dual
    space to $\mathcal{S}^1$. 
    
    \subsection{Quantum harmonic analysis}
    
    In this section we recall some classical properties of quantum harmonic analysis without proofs so that they can be compared to the corresponding results for measure-operator convolutions. The reader familiar with quantum harmonic analysis can safely skip over this section. For proofs, see \cite{Werner1984, Luef2018}.
    
    \subsubsection{Operator convolutions}
    A central idea in quantum harmonic analysis is that just as functions can be moved around using translations $T_x$, so can operators in phase space using a representation $\pi$, via the operator translation:
    \begin{equation}  \label{optransl} 
    \alpha_z(S) = \pi(z)S\pi(z)^*.
    \end{equation}
    Together with the idea that traces are the proper substitute for integrals in $\mathcal{S}^p$, this leads to the following definition of function-operator and operator-operator convolutions which form the key ingredients of QHA as introduced by R.~Werner: 
    \begin{align}\label{eq:op_op_conv_def}
    	f \star S = \int_{\R^{2d}} f(z)\alpha_z(S) \,dz,\qquad T \star S(z) = \tr\big(T \alpha_z(\check{S})\big).
    \end{align}
    Note in particular that $f \star S$ is an operator on $L^2(\R^d)$ while $T \star S$ is a function on $\R^{2d}$. We also define convolutions between an operator and a function as $S \star f := f \star S$. Below we collect some properties of these convolutions and later show how they relate to time-frequency analysis in Sections \ref{sec:prelim_cohen} and \ref{sec:mixed_state_loc}.
    \begin{proposition}\label{prop:op_conv_properties}
    	Let $f,g \in L^1(\R^{2d}),\, S \in \mathcal{S}^p,\, T \in \mathcal{S}^q$ for $1 \leq p,q, r \leq \infty$ with $\frac{1}{p} + \frac{1}{q} = 1 + \frac{1}{r}$ and $R$ be a compact operator, then
    	\begin{enumerate}[label=(\roman*)]
    		\item $f \star S$ is positive if $f$ is non-negative and $S$ is positive,
    		\item \label{item:op_op_positive}$T \star S$ is non-negative if $T$ and $S$ are positive,
    		\item $\tr(f \star S) = \tr(S) \int_{\R^{2d}} f(z)\,dz$ for $p=1$,
    		\item $(f \star S)^* = \bar{f} \star S^*$,
    		\item \label{item:ass1} $(f \star S) \star T = f * (S \star T)$,
    		\item \label{item:ass2} $(f * g) \star S = f  \star (g \star S)$,
    		\item \label{item:func_compact_conditon} $f \star R$ is compact,
    		\item $\Vert f \star S \Vert_{\mathcal{S}^p} \leq \Vert f \Vert_{L^1} \Vert S \Vert_{\mathcal{S}^p}$,
                \item \label{item:op_op_conv_interpol} $\Vert T \star S \Vert_{L^r} \leq \Vert S \Vert_{\mathcal{S}^p} \Vert T \Vert_{\mathcal{S}^q}$,
                \item \label{item:op_op_conv_explicit}$\int_{\R^{2d}} T \star S(z)\,dz = \tr(T)\tr(S)$ if $p=q=1$.
    	\end{enumerate}
    \end{proposition}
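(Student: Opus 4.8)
The plan is to derive each item directly from the Bochner-integral definitions in \eqref{eq:op_op_conv_def}, organised by the tool involved. Two facts are used everywhere: since $\pi(z)$ is unitary, the operator translation $\alpha_z$ from \eqref{optransl} preserves positivity, the operator norm and every Schatten norm, and $\tr(\alpha_z(S)) = \tr(S)$; and since $P$ is unitary and self-adjoint with $P\pi(z)P = \pi(-z)$ (a one-line computation), the map $S \mapsto \check S$ preserves positivity and satisfies $\tr(A\check B) = \tr(\check A B)$ by cyclicity. In particular $\check S \geq 0$ and $\alpha_z(\check S) \geq 0$ whenever $S \geq 0$, and for rank-one $S = \phi_1 \otimes \phi_2$ one has $\check S = P\phi_1 \otimes P\phi_2$, hence $\alpha_z(\check S) = \pi(z)P\phi_1 \otimes \pi(z)P\phi_2$.

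The algebraic items (i)--(iv) are obtained by moving scalars, adjoints, and bounded functionals through the integral. Testing against $\psi \in L^2(\R^d)$ gives $\langle (f\star S)\psi,\psi\rangle = \int_{\R^{2d}} f(z)\langle\alpha_z(S)\psi,\psi\rangle\,dz \geq 0$, which is (i), and $(f\star S)^* = \int_{\R^{2d}} \overline{f(z)}\,\alpha_z(S^*)\,dz$, which is (iv). For (ii), $T \geq 0$ and $\alpha_z(\check S)\geq 0$ force $(T\star S)(z) = \tr\big(T^{1/2}\alpha_z(\check S)T^{1/2}\big)\geq 0$ pointwise, the pairing being finite because $\frac1p+\frac1q \geq 1$ embeds $\mathcal{S}^p$ into $\mathcal{S}^{q'}$. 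For (iii), the trace is bounded on $\mathcal{S}^1$, so it commutes with the Bochner integral: $\tr(f\star S) = \int_{\R^{2d}} f(z)\,\tr(\alpha_z(S))\,dz = \tr(S)\int_{\R^{2d}} f(z)\,dz$. The norm bound (viii) simultaneously legitimizes the integral: $\|f\star S\|_{\mathcal{S}^p} \leq \int_{\R^{2d}} |f(z)|\,\|\alpha_z(S)\|_{\mathcal{S}^p}\,dz = \|f\|_{L^1}\|S\|_{\mathcal{S}^p}$ by the standard estimate for Bochner integrals. Item (vii) then follows by density: a compact $R$ is an operator-norm limit of finite-rank $R_n$, each $f\star R_n$ is a trace-class, hence compact, operator (the integral converging in $\mathcal{S}^1$), and by (viii) with $p=\infty$, $f\star R_n \to f\star R$ in operator norm, so $f\star R$ is a norm-limit of compact operators.

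For the associativity identities (v)--(vi), the mechanism is the cocycle relation $\pi(w+u) = c(w,u)\,\pi(w)\pi(u)$: the scalar $c(w,u)$ cancels against its conjugate under every conjugation, so $\alpha_{w+u} = \alpha_w \circ \alpha_u$ and $\tr\big(\alpha_w(S)\alpha_z(\check T)\big) = \tr\big(S\,\alpha_{z-w}(\check T)\big)$. Combining the $P$-identities above with cyclicity shows $(T\star S)(u) = (S\star T)(u) = \tr\big(S\,\alpha_u(\check T)\big)$, so the last display equals $(S\star T)(z-w)$; substituting into the definitions and interchanging integrals (justified by (viii) and absolute integrability) yields $(f\star S)\star T = f*(S\star T)$, and, after the substitution $z = w+u$ together with $\alpha_{w+u} = \alpha_w\circ\alpha_u$, also $(f*g)\star S = f\star(g\star S)$.

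The core of the proposition is the Young-type inequality (ix), from which (x) also follows, and I expect it to be the main obstacle. I would prove it by bilinear complex interpolation over the parameter triangle with vertices $(\tfrac1p,\tfrac1q) \in \{(1,0),(0,1),(1,1)\}$. The two vertices with $r=\infty$ are immediate from Hölder's inequality for Schatten classes: $|(T\star S)(z)| = |\tr(T\alpha_z(\check S))| \leq \|T\|_{\mathcal{S}^q}\|\alpha_z(\check S)\|_{\mathcal{S}^p} = \|T\|_{\mathcal{S}^q}\|S\|_{\mathcal{S}^p}$ when $\frac1p+\frac1q=1$. The remaining vertex $p=q=r=1$ is the real work; I would handle it by reducing to rank-one operators through singular value decompositions and the triangle inequality in $L^1$. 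For $T = \psi_1\otimes\psi_2$ and $S = \phi_1\otimes\phi_2$, using $\alpha_z(\check S) = \pi(z)P\phi_1\otimes\pi(z)P\phi_2$ and $\tr\big((a\otimes b)(c\otimes d)\big) = \langle c,b\rangle\langle a,d\rangle$ one computes
\begin{align*}
(T\star S)(z) = V_{P\phi_2}\psi_1(z)\,\overline{V_{P\phi_1}\psi_2(z)},
\end{align*}
so Cauchy--Schwarz and Moyal's identity \eqref{eq:moyal} give $\|T\star S\|_{L^1} \leq \|V_{P\phi_2}\psi_1\|_{L^2}\|V_{P\phi_1}\psi_2\|_{L^2} = \|\psi_1\|\|\psi_2\|\|\phi_1\|\|\phi_2\| = \|T\|_{\mathcal{S}^1}\|S\|_{\mathcal{S}^1}$; summing the singular value expansions of $T$ and $S$ (absolutely convergent by the same computation) gives the general $\mathcal{S}^1\times\mathcal{S}^1$ bound. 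Finally, for (x) the same rank-one identity gives $\int_{\R^{2d}} (T\star S)(z)\,dz = \langle V_{P\phi_2}\psi_1, V_{P\phi_1}\psi_2\rangle_{L^2(\R^{2d})} = \langle\psi_1,\psi_2\rangle\langle\phi_1,\phi_2\rangle = \tr(T)\tr(S)$ for rank-one $T,S$, and since the singular value series converge absolutely by the $L^1$ bound just established, summing yields $\int_{\R^{2d}} (T\star S)(z)\,dz = \tr(T)\tr(S)$ in general.
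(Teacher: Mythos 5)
The paper states this proposition without proof, deferring to the cited references \cite{Werner1984, Luef2018}, and your argument is correct and is essentially the standard one found there: positivity, adjoints and traces pass through the Bochner integral, associativity reduces to $\alpha_{w+u}=\alpha_w\circ\alpha_u$ and the cyclicity/parity identities, and the Young inequality (ix) is obtained from the two H\"older endpoints plus the $\mathcal{S}^1\times\mathcal{S}^1$ endpoint (rank-one reduction, $(T\star S)(z)=V_{P\phi_2}\psi_1(z)\overline{V_{P\phi_1}\psi_2(z)}$, Moyal) followed by bilinear interpolation. The only point worth flagging is that for $p=\infty$ the integral $\int f(z)\alpha_z(S)\,dz$ must be read weakly (as the paper itself does later for measures), since $z\mapsto\alpha_z(S)$ is not norm-continuous for general bounded $S$; with that reading your estimates in (vii) and (viii) go through unchanged.
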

    We also mention that both convolutions are commutative, function-operator convolutions by definition while for operator-operator convolutions this can be manually verified.
	
    \subsubsection{Fourier transforms}
    There is also an associated Fourier transform for operators, the \emph{Fourier-Wigner transform}, which generalizes the classical convolutions theorem in two different ways. It is defined for $S \in \mathcal{S}^1$ as a function on $\R^{2d}$ given by
    \begin{align}\label{eq:fourier_wigner_def}
        \mathcal{F}_W(S)(z) = e^{-\pi i x \omega} \tr(\pi(-z) S),
    \end{align}
    where $z = (x, \omega)$. The terminology comes from Folland \cite{HAPS} but was first used for operators by Werner in \cite{Werner1984} and has since been used extensively in quantum harmonic analysis \cite{Luef2018, Luef2021}.
    
    Before stating the convolution theorems, we also need to introduce the symplectic Fourier transform which is essentially a rotated Fourier transform on $\R^{2d}$, defined as
    $$
    \mathcal{F}_\sigma(f)(z) = \int_{\R^{2d}} f(z') e^{-2\pi i \sigma(z,z')}\,dz',
    $$
    where $z = (x,\omega),\, z' = (x', \omega')$ and $\sigma(z,z') = \omega x' - \omega' x$ is the standard symplectic form. With this, we have all the notation we need for the convolution theorem in place.
    \begin{proposition}\label{prop:function_convolution_theorem}
    Let $f \in L^1(\R^{2d})$ and $S, T \in \mathcal{S}^1$, then
    \begin{align*}
        \mathcal{F}_W(f \star S) &= \mathcal{F}_\sigma(f) \mathcal{F}_W(S),\\
        \mathcal{F}_\sigma(T \star S) &= \mathcal{F}_W(T) \mathcal{F}_W(S).
    \end{align*}
    \end{proposition}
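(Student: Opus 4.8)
The plan is to reduce both identities to two ingredients: the composition law for the time-frequency shifts, encoded in the covariance relation $\pi(w)\pi(z)\pi(w)^{*} = e^{2\pi i\sigma(w,z)}\pi(z)$ (valid with the paper's sign convention for $\sigma$, and obtained from $\pi(z)\pi(z') = c(z,z')\pi(z+z')$ with its unimodular cocycle $c$ together with $\pi(z)^{*} = \overline{c(z,-z)}\pi(-z)$); and the Moyal-type orthogonality relation $\langle\mathcal{F}_W(A),\mathcal{F}_W(B)\rangle_{L^{2}(\R^{2d})} = \tr(AB^{*})$ for $A,B\in\mathcal{S}^{2}$, which is itself a consequence of the classical Moyal identity \eqref{eq:moyal} via rank-one decomposition. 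From the covariance relation one reads off the transform rule $\mathcal{F}_W(\alpha_w(A))(v) = e^{-2\pi i\sigma(v,w)}\mathcal{F}_W(A)(v)$, while the definitions and the parity relation $P\pi(z) = \pi(-z)P$ give $\mathcal{F}_W(\check A)(v) = \mathcal{F}_W(A)(-v)$ and $\mathcal{F}_W(A^{*})(v) = \overline{\mathcal{F}_W(A)(-v)}$.

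For the first identity I would compute $\mathcal{F}_W(f\star S)$ straight from the definitions. Writing $z = (x,\omega)$, we have $\mathcal{F}_W(f\star S)(z) = e^{-\pi i x\omega}\tr\big(\pi(-z)(f\star S)\big)$; since $f\star S$ converges as a Bochner integral in $\mathcal{S}^{1}$ (Proposition~\ref{prop:op_conv_properties}) and $A\mapsto\tr(\pi(-z)A)$ is a bounded functional on $\mathcal{S}^{1}$, the trace passes inside the integral, producing $e^{-\pi i x\omega}\int_{\R^{2d}}f(z')\tr\big(\pi(-z)\pi(z')S\pi(z')^{*}\big)\,dz'$. Cyclicity of the trace together with the covariance relation turns $\tr(\pi(-z)\pi(z')S\pi(z')^{*})$ into $e^{-2\pi i\sigma(z,z')}\tr(\pi(-z)S)$, so the integral splits as $\big(\int_{\R^{2d}}f(z')e^{-2\pi i\sigma(z,z')}\,dz'\big)\tr(\pi(-z)S) = \mathcal{F}_\sigma(f)(z)\cdot e^{\pi i x\omega}\mathcal{F}_W(S)(z)$, and the two chirps cancel.

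For the second identity I would use the orthogonality relation. Since $(\alpha_w(\check S))^{*} = \alpha_w(\check{S^{*}})$, the definition gives $T\star S(w) = \tr\big(T\alpha_w(\check S)\big) = \langle\mathcal{F}_W(T),\mathcal{F}_W(\alpha_w(\check{S^{*}}))\rangle_{L^{2}}$, legitimate for $T,S\in\mathcal{S}^{1}\subset\mathcal{S}^{2}$. Expanding the second slot by the three transform rules above yields $\mathcal{F}_W(\alpha_w(\check{S^{*}}))(v) = e^{-2\pi i\sigma(v,w)}\overline{\mathcal{F}_W(S)(v)}$, hence $T\star S(w) = \int_{\R^{2d}}\mathcal{F}_W(T)(v)\mathcal{F}_W(S)(v)e^{-2\pi i\sigma(w,v)}\,dv$; the integrand lies in $L^{1}$ since $\mathcal{F}_W(T),\mathcal{F}_W(S)\in L^{2}$ (Plancherel for $\mathcal{F}_W$, again from \eqref{eq:moyal}), so the right-hand side equals $\mathcal{F}_\sigma\big(\mathcal{F}_W(T)\mathcal{F}_W(S)\big)(w)$. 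Applying $\mathcal{F}_\sigma$ to both sides and using $\mathcal{F}_\sigma\circ\mathcal{F}_\sigma = \mathrm{id}$ — valid here because both $T\star S$ and $\mathcal{F}_W(T)\mathcal{F}_W(S)$ are in $L^{1}$ (the former by the Young-type bound in Proposition~\ref{prop:op_conv_properties} with $p=q=1$) — gives $\mathcal{F}_\sigma(T\star S) = \mathcal{F}_W(T)\mathcal{F}_W(S)$. As an alternative that avoids the orthogonality relation, one can check both identities directly on rank-one operators $T = \psi_1\otimes\psi_2$, $S = \phi_1\otimes\phi_2$ — where $\mathcal{F}_W$ becomes, up to the chirp, a cross-ambiguity function and $T\star S$ a pointwise product of two short-time Fourier transforms, reducing the claim to a classical covariance identity for the cross-Wigner distribution — and then extend by bilinearity and density, using the $\mathcal{S}^{1}\to L^{1}$ and $\mathcal{S}^{1}\to C_{0}$ mapping properties of the convolution and of $\mathcal{F}_W$ to pass to the limit.

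The algebra is entirely routine; the one genuinely delicate point is the sign and phase bookkeeping. The cocycle $c$, the chirp $e^{-\pi i x\omega}$ built into $\mathcal{F}_W$, the orientation of $\sigma$, and the conjugations introduced by the parity operator and by passing to adjoints must all be carried along under a single fixed convention, so that every auxiliary character cancels and one lands on exactly $\mathcal{F}_\sigma(f)\mathcal{F}_W(S)$ and $\mathcal{F}_W(T)\mathcal{F}_W(S)$ rather than on these products twisted by a residual modulation. A secondary, purely technical point — relevant only to the density route — is to fix a function space, e.g.\ $W(C_0,\ell^1)(\R^{2d})$ (which contains $T\star S$ for $T,S\in\mathcal{S}^{1}$), on which $\mathcal{F}_\sigma$ and pointwise multiplication are both well behaved, so that limits commute with the transforms.
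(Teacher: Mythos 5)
Your argument is correct. Note that the paper states this proposition without proof (it is recalled from Werner and Luef--Skrettingland with a citation), so the only in-paper material to compare against is the proof of the measure-operator analogue $\mathcal{F}_W(\mu\star S)=\mathcal{F}_\sigma(\mu)\mathcal{F}_W(S)$ in Section 3 --- and your derivation of the first identity is exactly that computation: pass the trace through the integral defining $f\star S$ and use $\tr\big(\pi(-z)\pi(z')S\pi(z')^*\big)=e^{-2\pi i\sigma(z,z')}\tr\big(\pi(-z)S\big)$, after which the chirps cancel. (Your route is in fact slightly cleaner than the paper's, since commuting a bounded functional with a Bochner integral needs no integrability hypothesis on $\mathcal{F}_W(S)$, whereas the paper's measure version first assumes $\mathcal{F}_W(S)\in L^1$ and then appeals to density.) For the second identity, which the paper does not prove anywhere, your route through the quantum Plancherel identity $\langle\mathcal{F}_W(A),\mathcal{F}_W(B)\rangle_{L^2}=\tr(AB^*)$ is the standard one from Werner's original paper, and the two analytic points you flag --- that $\mathcal{F}_W(T)\mathcal{F}_W(S)\in L^1$ as a product of two $L^2$ functions, and that $T\star S\in L^1$ by the Young-type bound with $p=q=1$, so that symplectic Fourier inversion is legitimate --- are precisely what is needed to close the argument. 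The phase bookkeeping also checks out under the paper's conventions $\pi(z)=M_\omega T_x$ and $\sigma(z,z')=\omega x'-\omega' x$, for which $\pi(w)\pi(z)\pi(w)^*=e^{2\pi i\sigma(w,z)}\pi(z)$ holds exactly as you state.
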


	\subsubsection{Cohen's class distributions}\label{sec:prelim_cohen}
	In \cite{Luef2019}, the Cohen's class distributions discussed in Section \ref{sec:prelim_cohen_tf} above were characterized as the bilinear maps $Q : L^2(\R^d) \times L^2(\R^d) \to L^\infty(\R^{2d})$ such that $Q = Q_S$ for some $S \in \mathcal{L}(L^2(\R^d))$ where
	$$
	Q_S(\psi, \phi)(z) = (\psi \otimes \phi) \star \check{S}(z) = \big\langle \check{S}\pi(z)^* \psi, \pi(z)^* \phi \big\rangle.
	$$
	Moreover, when $S$ is positive and trace-class, we can expand $S$ in its singular value decomposition as $S = \sum_n \lambda_n (\varphi_n \otimes \varphi_n)$ and write
	\begin{align*}
    	Q_S(\psi)(z) = \sum_n \lambda_n |\langle \psi, \pi(z)\varphi_n\rangle|^2.
	\end{align*}
	by \cite[Theorem 7.6]{Luef2019}.
	
	We also mention that there is a reconstruction of identity related to Cohen's class distributions which generalizes \eqref{eq:reconstruction_classic}, first shown in \cite[Proposition 3.2 (3)]{Werner1984}. It states that if $S$ is trace-class with $\tr(S) = 1$,
	\begin{align}\label{eq:reconstruction_cohen}
	    \psi = \int_{\R^{2d}} \pi(z)S \pi(z)^* \psi \,dz,
	\end{align}
        which in particular implies that $1 \star S = \tr(S) I_{L^2}$.

	\subsubsection{Mixed-state localization operators}\label{sec:mixed_state_loc}
	Inspired by the identity \eqref{eq:reconstruction_cohen} and the fact that the case where $S$ is a rank-one operator precisely corresponds to classical localization operators $A_m^\varphi = m \star (\varphi \otimes \varphi)$, \emph{mixed-state} localization operators were introduced in \cite{Luef2019}. These correspond to letting $S$ be an arbitrary trace-class operator, i.e., an operator of the form $f \star S$. The terminology is borrowed from physics where each rank-one term in the singular value decomposition of a trace-class operators can be seen as a \emph{pure state} while a general trace-class operator corresponds to a \emph{mixing} of these states.
	
	\subsection{A functional-analytic approach to convolutions}\label{sec:prelim_convolutions}
    In the interest of completeness, we recall the key definitions and theorems from \cite{Feichtinger2017, Feichtinger2022} which we later use to set up measure-operator convolutions and deduce their elementary properties. For the necessary background on group representations, we refer to the book by Folland \cite{Folland}.
    
    We first go through the results in \cite{Feichtinger2022}, starting with some relevant definitions.
    \begin{definition}\label{def:isometric_rep}
	A mapping $\rho : G \to \mathcal{L}(B)$ is called an \emph{isometric representation} of a group $G$ on a Banach space $B$ if the mapping $\rho$ is linear, preserves identities, is a group homomorphism, i.e., satisfies
	\begin{align*}
	    \rho(xy) &= \rho(x) \circ \rho(y) \qquad \textrm{ for all }x,y \in G,
	\end{align*}
	and if each of the operators are isometric on $B$ meaning that
	$$
	\Vert \rho(x) S \Vert_B = \Vert S \Vert_B \qquad \textrm{ for all }x \in G \textrm{ and }S \in B.
	$$
	Moreover, if the mapping $x \mapsto \rho(x)S$ is continuous from $G$ to $B$ in the sense that
	$$
	\lim_{x \to 0} \Vert \rho(x) S - S \Vert_B = 0,
	$$
	we say that the representation $\rho$ is \emph{strongly continuous}.
	\end{definition}
        In \cite{sh71}, pairings $(B, \rho)$ are called {\it abstract homogeneous Banach spaces}. The usual {\it homogeneous Banach spaces} are Banach spaces of locally integrable functions endowed with the regular representation (by translations). For us, $B$ will be the space of trace-class operatos $\mathcal{S}^1$ and $\rho(z) = \alpha_z$ will be the operator translations \eqref{optransl}.
 
    We will use the notation $\bullet_\rho : G \times B \to B$ for the action of the representation and also define the representation for point measures as
    \begin{equation}
        \delta_x \bullet_\rho S := x \bullet_\rho S =  \rho(x) S.    
    \end{equation}
    Recall that a \emph{Banach algebra} is a Banach space $(A, \Vert \cdot \Vert_A)$ together with a continuous operation $* : A \times A \to A$. We can endow a Banach algebra with the following additional structure.
    \begin{definition}
        A Banach space $(B, \Vert \cdot \Vert_B)$ is a \emph{Banach module} over a Banach algebra $(A, *, \Vert \cdot \Vert_A)$ if one has a mapping $(a, b) \mapsto a \bullet b$ from $A \times B$ to $B$ that is bilinear and associative, i.e.,
        \begin{enumerate}[label=(\roman*)]
            \item $\Vert a \bullet b \Vert_B \leq \Vert a \Vert_A \Vert b \Vert_B$,
            \item $a_1 \bullet (a_2 \bullet b) = (a_1 * a_2) \bullet b$.
        \end{enumerate}
        If moreover
        $$
        \overline{\operatorname{span}( A \bullet B)} = B,
        $$
        the Banach module is said to be \emph{essential}.
    \end{definition}
    For us, the Banach algebra $A$ will be the convolutions algebra of bounded measures later on.
    
    We are now ready to state the main theorem we will use which is a reformulation of \cite[Theorem 2]{Feichtinger2022}. In the sequel, $M(G)$ denotes the space of bounded complex signed measures on the locally compact abelian group $G$ with $\Vert \mu \Vert_{M} = |\mu|(G)$, viewed as a Banach algebra with respect to convolution, see \cite{Feichtinger2017}. 
    \begin{theorem}\label{thm:existence_convolution}
        Let $\BspN$ be a Banach space and $\rho$ a strongly continuous and isometric representation of a locally compact abelian group $G$. Then $B$ is a Banach module over the Banach algebra $(M(G), \Vert \cdot \Vert_M)$ with respect to a mapping $\bullet_\rho : (\mu, S) \mapsto \mu \bullet_\rho S$ that extends the action of the discrete measure $\delta_x \bullet_\rho S = \rho(x)S$ and satisfies the estimate
        $$
        \Vert \mu \bullet_\rho S \Vert_B \leq \Vert \mu \Vert_{M(G)} \Vert S \Vert_B,\qquad \mu \in M(G),\, S \in B.
        $$
    \end{theorem}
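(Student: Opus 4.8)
The plan is to prove Theorem \ref{thm:existence_convolution} directly, constructing the module action $\mu \bullet_\rho S$ from the representation $\rho$ and verifying all the module axioms by hand. The central idea is to \emph{define} the action weakly via integration against the strongly continuous orbit map $x \mapsto \rho(x)S$. Concretely, for fixed $S \in B$, the map $x \mapsto \rho(x)S$ is a bounded continuous $B$-valued function on $G$ (bounded because $\rho$ is isometric, continuous because $\rho$ is strongly continuous), so I would like to set
$$
\mu \bullet_\rho S = \int_G \rho(x)S \, d\mu(x).
$$
The first task is to make sense of this integral for a \emph{bounded measure} $\mu \in M(G)$ without invoking Bochner integration (which is the approach the introduction explicitly wants to avoid). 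The clean way is to define the integral weakly: for each functional $\ell \in B^*$, the scalar map $x \mapsto \langle \rho(x)S, \ell \rangle$ is bounded and continuous, hence $\mu$-integrable, and $\ell \mapsto \int_G \langle \rho(x)S, \ell\rangle\, d\mu(x)$ is a bounded linear functional on $B^*$ of norm at most $\|\mu\|_{M(G)}\|S\|_B$. One then argues that this functional is represented by an element of $B$ itself; for a general Banach space this requires a density/approximation argument, which I return to below as the main obstacle.

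Granting the existence of the element $\mu \bullet_\rho S \in B$, the next step is the norm estimate
$$
\Vert \mu \bullet_\rho S \Vert_B \leq \Vert \mu \Vert_{M(G)} \Vert S \Vert_B,
$$
which falls out immediately from the weak definition: taking the supremum over unit $\ell \in B^*$ and using $|\langle \rho(x)S,\ell\rangle| \le \|S\|_B$ under the integral gives the bound. I would then verify the bilinearity of $(\mu, S) \mapsto \mu \bullet_\rho S$, which is routine from linearity of the integral in both arguments, and the extension property $\delta_x \bullet_\rho S = \rho(x)S$, which is simply the evaluation of the weak integral against the point mass $\delta_x$. The associativity axiom $\mu_1 \bullet_\rho (\mu_2 \bullet_\rho S) = (\mu_1 * \mu_2) \bullet_\rho S$ is the substantive algebraic identity: I would prove it first for point masses $\mu_1 = \delta_x$, $\mu_2 = \delta_y$, where it reduces to the homomorphism property $\rho(x)\rho(y) = \rho(xy)$ from Definition \ref{def:isometric_rep} together with $\delta_x * \delta_y = \delta_{xy}$, and then extend to general $\mu_1, \mu_2$ by a Fubini-type argument: testing both sides against $\ell \in B^*$ turns the identity into the associativity of the scalar convolution integral $\int_G \int_G \langle \rho(xy)S, \ell\rangle \, d\mu_1(x)\,d\mu_2(y)$, which is the defining property of convolution on $M(G)$.

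The main obstacle I anticipate is the \emph{well-definedness} of the integral as an element of $B$ rather than of the bidual $B^{**}$; the weak definition a priori only produces an element of $B^{**}$. To land back in $B$ I would exploit strong continuity decisively. First establish the result for absolutely continuous $\mu = g\,dx$ with $g \in L^1(G)$ and, say, $g$ of compact support, where a Riemann-sum approximation of $\int_G g(x)\rho(x)S\,dx$ converges in $B$-norm by uniform continuity of $x \mapsto \rho(x)S$ on compacta — each Riemann sum is a finite combination $\sum_j c_j \rho(x_j)S$ and hence visibly lies in $B$, and $B$ is complete. Since $L^1(G)$ (with convolution) is dense in $M(G)$ only in a weak-* sense, I would not approximate $\mu$ by $L^1$ functions directly; instead I would use an approximate identity $(e_\beta) \subset L^1(G)$ and show $e_\beta \bullet_\rho(\mu \bullet_\rho S) \to \mu\bullet_\rho S$, together with the fact that $e_\beta * \mu \in L^1(G)$, to realize $\mu \bullet_\rho S$ as a $B$-norm limit of the already-constructed $L^1$-actions $(e_\beta * \mu)\bullet_\rho S$, each of which lies in $B$. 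Closedness of $B$ then forces the limit into $B$. This approximate-identity step, and verifying that $\overline{\operatorname{span}(M(G) \bullet_\rho B)} = B$ to confirm the module is essential, is where the strong continuity hypothesis is genuinely needed and is the most delicate part of the argument.
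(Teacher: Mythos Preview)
Your approach is workable but differs from the one the paper invokes. The paper does not prove this theorem itself---it is imported from \cite{Feichtinger2022}---but it records the construction used there (see \eqref{eq:conv_def_bupu}): one discretizes $\mu$ via a BUPU, setting $D_\Psi\mu = \sum_i \mu(\psi_i)\delta_{z_i}$, so that $D_\Psi\mu \bullet_\rho S = \sum_i \mu(\psi_i)\rho(z_i)S$ is manifestly a finite linear combination in $B$, and then shows that these form a Cauchy net as $|\Psi|\to 0$ using the uniform continuity of $x\mapsto\rho(x)S$. This bypasses the bidual entirely and, more importantly for the paper's purposes, yields the explicit discrete approximation scheme that is reused in Theorem~\ref{thm:weak_def_of_measure_operator_convolution} and Lemma~\ref{lemma:measure_function_convolution_limit} to compute the weak action. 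Your route through a weak (Pettis-type) integral followed by an approximate-identity argument is closer in spirit to the Bochner-integral construction the introduction explicitly sets out to avoid; it can be made rigorous, but watch the phrasing of your final step. Writing ``$e_\beta \bullet_\rho(\mu \bullet_\rho S) \to \mu\bullet_\rho S$'' presupposes that $\mu\bullet_\rho S$ already lies in $B$, which is precisely what you are trying to establish. The non-circular formulation is to show directly that the net $\big((e_\beta*\mu)\bullet_\rho S\big)_\beta$ is Cauchy in $B$---for instance via the identity $(e_\beta*\mu)\bullet_\rho S = \int_G \rho(y)(e_\beta\bullet_\rho S)\,d\mu(y)$ combined with the isometry of $\rho$ and the already-established $L^1$-convergence $e_\beta\bullet_\rho S \to S$, giving $\|(e_\beta*\mu)\bullet_\rho S - (e_\gamma*\mu)\bullet_\rho S\|_B \le \|\mu\|_M\,\|e_\beta\bullet_\rho S - e_\gamma\bullet_\rho S\|_B$---and only afterwards identify the limit with your $B^{**}$ element.
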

    We interpret such a mapping $\bullet_\rho$ as a (generalized) convolution between the measure $\mu$ and the operator $S$.
    
    \begin{remark}
    By identifying $L^1(G)$ with a closed subspace  (ideal) of $M(G)$, this also defines a convolution between functions and elements of $B$.
    \end{remark}
    
    We moreover have a uniqueness theorem for this extension which is \cite[Theorem 6]{Feichtinger2022}.
    \begin{theorem}\label{thm:conv_unique_if_essential}
        Let $\BspN$ be a Banach space and $\rho$ a strongly continuous and isometric representation of a locally compact abelian group $G$. Then there is a unique $w^*$-continuous (for bounded and tight families) extension of $\bullet_\rho$ to $(M(G), \Vert \cdot \Vert_M)$, i.e., a bounded, bilinear mapping
        $$
        (\mu, S) \mapsto \mu \bullet_\rho S,\qquad \mu \in M(G),\, S \in B,
        $$
        which turns $(B, \Vert \cdot \Vert_B)$ into an essential $L^1(G)$-module.
    \end{theorem}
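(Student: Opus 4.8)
The existence of a bounded bilinear $\bullet_\rho\colon M(G)\times B\to B$ extending $\delta_x\bullet_\rho S=\rho(x)S$ and obeying $\|\mu\bullet_\rho S\|_B\le\|\mu\|_{M(G)}\|S\|_B$ is already supplied by Theorem~\ref{thm:existence_convolution}, and restricting it to the closed ideal $L^1(G)\subset M(G)$ turns $B$ into an $L^1(G)$-module (as noted in the remark above). So the two things still to prove are that this module is \emph{essential} and that the extension is \emph{unique} within the class of bounded bilinear maps that extend the point-mass action and are $w^*$-continuous along bounded tight nets. The plan is to push both statements down to the action of finitely supported (``discrete'') measures $\sum_j c_j\delta_{x_j}$ — where bilinearity forces $\big(\sum_j c_j\delta_{x_j}\big)\bullet_\rho S=\sum_j c_j\rho(x_j)S$ — and then to propagate from there using the two stability tools at hand: the norm bound $\|\mu\bullet_\rho S\|_B\le\|\mu\|_M\|S\|_B$ and $w^*$-continuity.

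For essentiality I would fix a bounded approximate identity $(e_i)_{i\in I}\subset L^1(G)$ with $e_i\ge0$, $\int_G e_i\,dx=1$, and $\supp(e_i)$ shrinking to the identity of $G$, and first check the integral formula
\[
e_i\bullet_\rho S=\int_G e_i(x)\,\rho(x)S\,dx\qquad(S\in B).
\]
The right-hand side makes sense because $x\mapsto e_i(x)\rho(x)S$ is a norm-continuous, compactly supported $B$-valued function (strong continuity and the isometry property); and the formula itself follows by writing $e_i\,dx$ as the $w^*$-limit of its Riemann sums $\sum_j e_i(x_j)|Q_j|\delta_{x_j}$, a bounded net of measures sitting inside one fixed compact set, so that the corresponding operators $\sum_j e_i(x_j)|Q_j|\rho(x_j)S$ converge $w^*$ to $e_i\bullet_\rho S$ while also converging in $\|\cdot\|_B$ to the displayed integral. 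Granting the formula,
\[
\|e_i\bullet_\rho S-S\|_B=\Big\|\int_G e_i(x)\big(\rho(x)S-S\big)\,dx\Big\|_B\le\int_G e_i(x)\,\|\rho(x)S-S\|_B\,dx,
\]
which tends to $0$ as $\supp(e_i)$ shrinks, by strong continuity of $\rho$. Hence every $S$ lies in $\overline{L^1(G)\bullet_\rho B}$ and the module is essential.

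For uniqueness, let $\bullet$ and $\bullet'$ be two extensions with the listed properties; by bilinearity they agree on all discrete measures. Fix $S\in B$. If $\mu\in M(G)$ has compact support, take a net of ever-finer Borel partitions $\{Q_j^\alpha\}_j$ of a fixed compact neighborhood of $\supp(\mu)$ with sample points $x_j^\alpha\in Q_j^\alpha$, and set $\mu_\alpha=\sum_j\mu(Q_j^\alpha)\delta_{x_j^\alpha}$; then $\|\mu_\alpha\|_M\le\|\mu\|_M$, all $\mu_\alpha$ live in one fixed compact set (so the family is tight), and $\mu_\alpha\to\mu$ in the $w^*$-topology of $M(G)=C_0(G)'$ by uniform continuity of test functions on the support region. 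Since $\mu_\alpha\bullet S=\mu_\alpha\bullet' S$ for all $\alpha$, $w^*$-continuity along bounded tight nets forces $\mu\bullet S=\mu\bullet' S$. For arbitrary $\mu\in M(G)$ I would fix an increasing sequence of compacts $K_1\subset K_2\subset\cdots$ with $|\mu|(G\setminus K_n)\to0$ (inner regularity); then each $\mu|_{K_n}$ is compactly supported and $\|\mu\bullet S-\mu|_{K_n}\bullet S\|_B\le\|\mu-\mu|_{K_n}\|_M\|S\|_B\to0$ (likewise for $\bullet'$), so $\mu\bullet S=\mu\bullet' S$. Thus the extension is unique, and since the construction behind Theorem~\ref{thm:existence_convolution} produces the module action precisely as such a $w^*$-limit over discretizations, it is the extension in question.

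The part I expect to take the most care is the correct reading and bookkeeping of ``$w^*$-continuous for bounded and tight families'': one has to engineer the discretizing nets to be simultaneously norm-bounded, tight, and $w^*$-convergent — which is exactly why the argument splits into a compactly supported step, handled inside a fixed compact set, and a norm-truncation step — and one has to make sure the target topology (the weak-$*$ topology on $B$ when $B$ is a dual space, which covers the case $B=\mathcal S^1$ relevant here) is Hausdorff, so that the norm-limit and the $w^*$-limit of the Riemann sums really are forced to coincide. Everything else is the standard approximate-identity and measure-regularity routine.
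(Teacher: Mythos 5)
This theorem is imported verbatim from \cite[Theorem 6]{Feichtinger2022} and the paper supplies no proof of its own, but your argument is a correct reconstruction along exactly the lines of the machinery the paper does recall: discretization by Riemann sums/BUPUs inside a fixed compact set and $w^*$-limits of bounded tight nets for uniqueness (cf.\ \eqref{eq:conv_def_bupu} and Lemma \ref{lemma:measure_function_convolution_limit}), a shrinking-support approximate identity for essentiality (cf.\ Corollary \ref{corollary:essential}), and inner regularity plus the bound $\Vert \mu \bullet_\rho S \Vert_B \leq \Vert \mu \Vert_M \Vert S \Vert_B$ for the truncation to compactly supported measures. The one thing to tidy is your closing worry: in this framework ``$w^*$-continuous for bounded and tight families'' means that $w^*$-convergent bounded tight nets of measures are sent to \emph{norm}-convergent nets in $(B, \Vert \cdot \Vert_B)$ (see Theorem \ref{theorem:weak_continuity_for_meas_op_abstract_version}), so no weak-$*$ topology on $B$ and no Hausdorffness issue arises --- the two limits you compare in the integral-formula step are both norm limits and coincide automatically.
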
 
    The module action of 
    $L^1(G)$ is obtained by restriction of the action of    $M(G)$. Since $L^1(G)$ does not 
    contain a unit element in the non-discrete case it is interesting to observe that 
   the module $B$ is essential by the following corollary.
    \begin{corollary}[{\cite[Corollary 1]{Feichtinger2022}}]\label{corollary:essential}
    Given the situation in Theorem \ref{thm:existence_convolution} and a bounded approximate unit $(g_\alpha)_{\alpha \in I}$ in $(L^1(G), \Vert \cdot \Vert_{L^1})$, we have that
    $$
    \lim_{\alpha \to \infty} \Vert g_\alpha \bullet_\rho S - S \Vert_B = 0\qquad \textrm{ for all } S \in B.
    $$
    \end{corollary}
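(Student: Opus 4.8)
The plan is to show that
$$
B_0 \;:=\; \Bigl\{\, S \in B \;:\; \lim_{\alpha} \|\, g_\alpha \bullet_\rho S - S \,\|_B = 0 \,\Bigr\}
$$
is all of $B$, by checking that it is a closed subspace of $B$ which contains a dense set. That $B_0$ is a linear subspace is immediate from the bilinearity of $\bullet_\rho$. For closedness, set $C := \sup_{\alpha \in I} \|g_\alpha\|_{L^1(G)}$, which is finite because $(g_\alpha)_{\alpha\in I}$ is a \emph{bounded} approximate unit, and use the norm estimate from Theorem~\ref{thm:existence_convolution}: if $S_n \to S$ in $B$ with each $S_n \in B_0$, then
$$
\|\, g_\alpha \bullet_\rho S - S \,\|_B \;\le\; \|\, g_\alpha \bullet_\rho (S - S_n) \,\|_B + \|\, g_\alpha \bullet_\rho S_n - S_n \,\|_B + \|\, S_n - S \,\|_B \;\le\; (C+1)\,\|\, S - S_n \,\|_B + \|\, g_\alpha \bullet_\rho S_n - S_n \,\|_B ,
$$
so, choosing $n$ large and then passing to the limit in $\alpha$, we obtain $S \in B_0$.

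It remains to exhibit a dense subset of $B$ inside $B_0$. Here I would invoke Theorem~\ref{thm:conv_unique_if_essential}, which guarantees that $B$ is an \emph{essential} $L^1(G)$-module, i.e. $\overline{\operatorname{span}\{\, h \bullet_\rho T : h \in L^1(G),\, T \in B \,\}} = B$. Since $B_0$ is a closed subspace, it suffices to check that every element of the form $S = h \bullet_\rho T$ lies in $B_0$. Using the module associativity axiom with the convolution of $L^1(G) \subset M(G)$, we get $g_\alpha \bullet_\rho (h \bullet_\rho T) = (g_\alpha * h) \bullet_\rho T$, and therefore
$$
\|\, g_\alpha \bullet_\rho S - S \,\|_B = \|\, (g_\alpha * h - h) \bullet_\rho T \,\|_B \;\le\; \|\, g_\alpha * h - h \,\|_{L^1(G)}\, \|T\|_B \;\longrightarrow\; 0 ,
$$
the last convergence being exactly the defining property of a bounded approximate unit in the Banach algebra $L^1(G)$. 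Hence $\operatorname{span}\{\, h \bullet_\rho T \,\} \subseteq B_0$, and closedness of $B_0$ yields $B_0 = B$.

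The one point that needs a little care is that the statement is claimed for an \emph{arbitrary} bounded approximate unit, not merely for a convenient mollifier family. For the latter case — $g_\alpha \ge 0$, $\int_G g_\alpha = 1$, and $\operatorname{supp} g_\alpha$ eventually inside every neighbourhood $U$ of the identity — one could argue directly by writing $g_\alpha \bullet_\rho S - S = \int_G g_\alpha(x)\,(\rho(x)S - S)\,dx$, estimating in $B$-norm, and splitting the integral over $U$ (where $\|\rho(x)S - S\|_B$ is small by strong continuity of $\rho$) and over $G \setminus U$ (whose $g_\alpha$-mass tends to $0$), the latter piece bounded crudely by $2\|S\|_B \int_{G\setminus U} g_\alpha$. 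For a general bounded approximate unit this splitting is not available, and moreover one should avoid realizing $g_\alpha \bullet_\rho S$ as a Bochner integral; the abstract reduction above — closedness of $B_0$ together with density of $\operatorname{span}\{ h \bullet_\rho T\}$ from Theorem~\ref{thm:conv_unique_if_essential} and the module associativity — is the robust route, and the remaining steps are routine.
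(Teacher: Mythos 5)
The paper does not prove this corollary at all --- it is imported verbatim from \cite[Corollary 1]{Feichtinger2022} --- so there is no internal proof to compare against; your argument has to be judged on its own and against the logical architecture the paper sets up. Your reduction (the set $B_0$ of good elements is a closed subspace by the uniform bound $\sup_\alpha\|g_\alpha\|_{L^1}<\infty$ together with $\|\mu\bullet_\rho S\|_B\le\|\mu\|_M\|S\|_B$; elements $h\bullet_\rho T$ lie in $B_0$ by module associativity and the approximate-unit property of $L^1(G)$) is the standard essential-module argument and each step is correct. The one genuine concern is where you get essentiality from. You invoke Theorem~\ref{thm:conv_unique_if_essential}, but the paper's own narration immediately before the corollary reads ``it is interesting to observe that the module $B$ is essential by the following corollary,'' i.e.\ the intended logical order is Corollary $\Rightarrow$ essentiality, and the phrasing of Theorem~\ref{thm:conv_unique_if_essential} (``unique \dots extension \dots which turns $B$ into an essential $L^1(G)$-module'') can be read as a uniqueness statement \emph{within} the class of essential extensions rather than an independent assertion that the canonical extension is essential. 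If that reading is the right one, your proof is circular as written.

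The gap is easy to close, and you already have the ingredients in your last paragraph: establish essentiality first for a Dirac-type net. A net $(u_\gamma)$ with $u_\gamma\ge 0$, $\int_G u_\gamma=1$ and supports shrinking to $\{e\}$ is bounded, tight, and $w^*$-convergent to $\delta_e$, so Theorem~\ref{theorem:weak_continuity_for_meas_op_abstract_version} gives $\|u_\gamma\bullet_\rho S-\delta_e\bullet_\rho S\|_B=\|u_\gamma\bullet_\rho S-S\|_B\to 0$ directly, with no Bochner integral (alternatively, estimate $\|D_\Psi u_\gamma\bullet_\rho S-S\|_B\le\sum_i u_\gamma(\psi_i)\,\|\rho(z_i)S-S\|_B$ and use strong continuity of $\rho$). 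This shows every $S$ lies in $\overline{L^1(G)\bullet_\rho B}$, i.e.\ the module is essential, and then your closed-subspace argument upgrades the conclusion from Dirac nets to an \emph{arbitrary} bounded approximate unit. With that reordering the proof is complete and, as you note, more robust than the direct splitting argument, which is only available for mollifier-type families.
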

    
    For applications, we often want an explicit form of this convolution, which is going to be given in a weak formulation. In the original proof of Theorem \ref{thm:existence_convolution}, the convolution is defined as the limit
    \begin{align}\label{eq:conv_def_bupu}
        \mu \star S = \lim_{|\Psi| \to 0} D_\Psi \mu \bullet_\rho S,
    \end{align}
    where $\Psi$ is a BUPU of $G$, discussed in Section \ref{sec:wiener_and_bupu}, and $D_\Psi : \mu \mapsto \sum_{i\in I} \mu(\psi_i)\delta_{z_i}$ is a \emph{discretization operator}. To compute it, we can use the following lemma which is specific to the Euclidean case.
    \begin{lemma}[{\cite[Lemma 7]{Feichtinger2017}}]\label{lemma:measure_function_convolution_limit}
        Let $f \in C_0(\R^{2d})$, then
        $$
        \lim_{|\Psi| \to 0} D_\Psi \mu (f) = \mu(f) = \int_{\R^{2d}} f(z)\, d\mu(z).
        $$
    \end{lemma}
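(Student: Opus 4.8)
The plan is to use the partition-of-unity identity $\sum_{i\in I}\psi_i\equiv 1$ to split both $D_\Psi\mu(f)$ and $\mu(f)$ into sums of localized integrals, and then to estimate the difference termwise using the uniform continuity of $f$ together with the fact that $\supp\psi_i$ shrinks to $\{z_i\}$ as the mesh $|\Psi|\to 0$. In effect this is a Riemann--Stieltjes sum argument: $D_\Psi\mu(f)=\sum_i\mu(\psi_i)f(z_i)$ is a weighted sample of $f$ whose weights $\mu(\psi_i)$ sum to $\mu(\R^{2d})$.

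First I would write, by linearity and the definition of $D_\Psi$,
\[
D_\Psi\mu(f)=\sum_{i\in I}\mu(\psi_i)\,f(z_i)=\sum_{i\in I}\int_{\R^{2d}}f(z_i)\,\psi_i(z)\,d\mu(z).
\]
On the other hand, since $\sum_{i\in I}\psi_i(z)=1$ for every $z$ and, by the bounded overlap property (iv), this sum is locally finite, while $\sum_{i\in I}|\psi_i(z)|\le C$ holds uniformly in $z$ (with $C$ obtained by combining (ii) with the overlap bound), an application of dominated convergence to the finite measure $|\mu|$ and the bounded function $f$ (and Tonelli for absolute summability of the resulting series) justifies
\[
\mu(f)=\int_{\R^{2d}}f(z)\sum_{i\in I}\psi_i(z)\,d\mu(z)=\sum_{i\in I}\int_{\R^{2d}}f(z)\,\psi_i(z)\,d\mu(z).
\]
Subtracting the two expansions gives
\[
\bigl|D_\Psi\mu(f)-\mu(f)\bigr|\le\sum_{i\in I}\int_{\R^{2d}}\bigl|f(z_i)-f(z)\bigr|\,|\psi_i(z)|\,d|\mu|(z).
\]

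Next I would fix $\varepsilon>0$. As $f\in C_0(\R^{2d})$ it is uniformly continuous, so there is $\delta>0$ with $|f(w)-f(z)|<\varepsilon$ whenever $|w-z|<\delta$. By property (iii), $\supp\psi_i\subset U+z_i$; interpreting $|\Psi|$ as the mesh size $\sup_{i\in I}\operatorname{diam}(\supp\psi_i)$ (at most $\operatorname{diam}(U)$), the condition $|\Psi|<\delta$ forces $|z-z_i|<\delta$ for every $z\in\supp\psi_i$, hence $|f(z_i)-f(z)|<\varepsilon$ on $\supp\psi_i$. Inserting this into the last estimate and interchanging sum and integral once more,
\[
\bigl|D_\Psi\mu(f)-\mu(f)\bigr|\le\varepsilon\int_{\R^{2d}}\sum_{i\in I}|\psi_i(z)|\,d|\mu|(z)\le\varepsilon\,C\,\Vert\mu\Vert_{M}.
\]
Since $\varepsilon>0$ was arbitrary and $\Vert\mu\Vert_{M}<\infty$, this shows $D_\Psi\mu(f)\to\mu(f)$ as $|\Psi|\to 0$, which is the assertion.

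I do not expect a genuine obstacle here; the two points requiring care are the interchanges of summation and integration and the precise meaning of ``$|\Psi|\to 0$''. Both are handled by the structural axioms of a BUPU: the bounded overlap property (iv) gives local finiteness of $\{\psi_i\}$ and the uniform bound $\sum_i|\psi_i|\le C$ that legitimize the interchanges (for non-negative BUPU's one simply has $C=1$), while (iii) is exactly what makes $\operatorname{diam}(\supp\psi_i)\le|\Psi|$ and thereby lets uniform continuity of $f$ close the estimate. Finiteness of $\mu$ is what keeps the final bound finite.
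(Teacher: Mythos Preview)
The paper does not actually prove this lemma; it is quoted verbatim as \cite[Lemma 7]{Feichtinger2017} and used as a black box. Your argument is the standard Riemann--Stieltjes approximation proof and is correct: the two ingredients you identify---the uniform bound $\sum_i|\psi_i|\le C$ from the BUPU axioms (ii) and (iv), and uniform continuity of $f\in C_0(\R^{2d})$---are precisely what is needed, and your justification of the sum--integral interchanges via Tonelli/dominated convergence against the finite measure $|\mu|$ is sound.
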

    Lastly we cover a theorem which we will  use  in Section \ref{sec:gabor_seq_conv}, concerning the  continuity of the action $\bullet_\rho$. Its formulation is based on tight and bounded nets of measures $(\mu_\alpha)_{\alpha \in I} \subset M(G)$ indexed by some set $I$. We remind the reader that nets are a generalization of sequences and all relevant facts can be found in \cite[Section 7]{Feichtinger2022}. When we say that a net is bounded we mean that it is bounded uniformly in the measure norm and tightness means that for any $\varepsilon > 0$, we can find a function $k \in C_c(G)$ such that $\Vert \mu_\alpha - k \cdot \mu_\alpha \Vert_{M} < \varepsilon$ for all $\alpha \in I$.
    \begin{theorem}[{\cite[Theorem 5]{Feichtinger2022}}]\label{theorem:weak_continuity_for_meas_op_abstract_version}
        Let $\rho$ be a strongly continuous, isometric representation of the locally compact group $G$ on the Banach space $(B, \Vert \cdot \Vert_B)$ and $(\mu_\alpha)_{\alpha\in I}$ a bounded and tight net in $(M(G), \Vert \cdot \Vert_{M})$ with $\mu_0 = w^*-\lim_{\alpha \to \infty} \mu_\alpha$. Then one has
        \begin{align*}
            \lim_{\alpha\to\infty} \Vert \mu_\alpha \bullet_\rho S - \mu_0 \bullet_\rho S \Vert_B = 0\qquad \text{for all }S \in B.
        \end{align*}
    \end{theorem}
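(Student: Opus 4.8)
Here is how I would approach proving Theorem~\ref{theorem:weak_continuity_for_meas_op_abstract_version}.

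The plan is a three-$\varepsilon$ reduction that transfers the whole problem to a convergence statement for genuine $L^1(G)$ functions. Set $C := \sup_\alpha \Vert \mu_\alpha\Vert_{M(G)} + \Vert \mu_0\Vert_{M(G)}$, which is finite because the net is bounded and the total variation norm is $w^*$-lower semicontinuous (so $\Vert \mu_0\Vert_{M(G)}\le\liminf_\alpha\Vert\mu_\alpha\Vert_{M(G)}$). Fix $S\in B$ and $\varepsilon>0$. First I would invoke Corollary~\ref{corollary:essential}, with a bounded approximate unit chosen inside $C_c(G)$, to select $g\in C_c(G)$ with $\Vert g\bullet_\rho S-S\Vert_B<\varepsilon/C$. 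The module estimate of Theorem~\ref{thm:existence_convolution} then bounds $\Vert\mu_\alpha\bullet_\rho S-\mu_\alpha\bullet_\rho(g\bullet_\rho S)\Vert_B$ and the corresponding $\mu_0$-term by $\varepsilon$ each, uniformly in $\alpha$. Associativity of the module action rewrites $\mu_\alpha\bullet_\rho(g\bullet_\rho S)=(\mu_\alpha*g)\bullet_\rho S$, and since $L^1(G)\hookrightarrow M(G)$ isometrically, one further use of the module estimate gives
\begin{align*}
\Vert(\mu_\alpha*g)\bullet_\rho S-(\mu_0*g)\bullet_\rho S\Vert_B\le\Vert(\mu_\alpha-\mu_0)*g\Vert_{L^1(G)}\,\Vert S\Vert_B.
\end{align*}
Thus the theorem follows once one shows $\Vert(\mu_\alpha-\mu_0)*g\Vert_{L^1(G)}\to0$ for every $g\in C_c(G)$.

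To prove that, I would localize using tightness. Choose $k\in C_c(G)$ with $\Vert\mu_\alpha-k\mu_\alpha\Vert_{M(G)}<\varepsilon$ for all $\alpha$; since $k\mu_\alpha\to k\mu_0$ in $w^*$ (test against $fk\in C_c(G)$ for $f\in C_0(G)$) and the norm is $w^*$-lower semicontinuous, the same $k$ yields $\Vert\mu_0-k\mu_0\Vert_{M(G)}\le\varepsilon$. By Young's inequality, removing these tails changes the $L^1(G)$-norm by at most $2\varepsilon\Vert g\Vert_{L^1(G)}$, so it suffices to control $\Vert(k\mu_\alpha-k\mu_0)*g\Vert_{L^1(G)}$, where now all measures are supported in the single compact set $\supp(k)$. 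Pointwise one has $\big((k\mu_\alpha-k\mu_0)*g\big)(x)=(\mu_\alpha-\mu_0)\big(k\cdot g(x-\cdot)\big)\to0$, because $k\cdot g(x-\cdot)\in C_c(G)\subset C_0(G)$ and $\mu_\alpha\to\mu_0$ in $w^*$. The family $\{\,k\cdot g(x-\cdot):x\in G\,\}$ is uniformly bounded, equicontinuous, and uniformly supported in $\supp(k)$, hence relatively compact in $C_0(G)$ by Arzel\`{a}--Ascoli; a bounded net converging in the $w^*$-topology converges uniformly on norm-compact subsets of $C_0(G)$, so $\sup_x\big|\big((k\mu_\alpha-k\mu_0)*g\big)(x)\big|\to0$. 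Since all these functions are supported in the fixed compact set $\supp(k)+\supp(g)$, the uniform convergence upgrades to $L^1(G)$-convergence, and chaining the estimates while letting $\varepsilon\to0$ completes the argument.

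The single genuinely delicate point is that we are working with nets, not sequences, so the dominated convergence theorem cannot be used to pass from the pointwise convergence of $(\mu_\alpha-\mu_0)*g$ to convergence in $L^1(G)$. The tightness hypothesis is precisely what repairs this: it confines the relevant test functions $k\cdot g(x-\cdot)$ to a single fixed compact set, and the Arzel\`{a}--Ascoli compactness of that family of test functions then turns plain $w^*$-convergence into uniform --- hence $L^1(G)$ --- convergence. Everything else (boundedness of the net, the isometric embedding $L^1(G)\hookrightarrow M(G)$, associativity of the module, and the approximate-unit approximation of $S$) is already supplied by Theorem~\ref{thm:existence_convolution} and Corollary~\ref{corollary:essential}.
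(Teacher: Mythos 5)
The paper does not actually prove this statement: it is imported verbatim as Theorem~5 of \cite{Feichtinger2022}, so there is no internal proof to compare against, and your argument has to be judged on its own. It holds up. The reduction chain --- replace $S$ by $g\bullet_\rho S$ with $g$ drawn from a bounded approximate unit inside $C_c(G)$ (Corollary~\ref{corollary:essential}), use module associativity and the isometric embedding $L^1(G)\hookrightarrow M(G)$ to reduce to $\Vert(\mu_\alpha-\mu_0)*g\Vert_{L^1}\to 0$, cut tails with the tightness function $k$, and then upgrade $w^*$-convergence to uniform convergence by observing that $\{k\cdot g(x-\cdot):x\in G\}$ is relatively norm-compact in $C_0(G)$ --- is sound, and it correctly identifies where each hypothesis (boundedness, tightness, $w^*$-convergence) enters. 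This is a genuinely independent route from the one suggested by the paper's exposition, which presents $\mu\bullet_\rho S$ through the BUPU discretization \eqref{eq:conv_def_bupu}; your factorization through $L^1(G)$ via the approximate unit is a more classical harmonic-analysis argument and has the virtue of never touching the discretization operators. Two small points deserve an explicit word. First, in the final ``let $\varepsilon\to 0$'' step the residual term is $2\varepsilon\Vert g\Vert_{L^1}\Vert S\Vert_B$ with $g$ depending on $\varepsilon$, so you must invoke the uniform bound $\Vert g\Vert_{L^1}\le M$ coming from the \emph{boundedness} of the approximate unit; you assume this implicitly but should say it where it is used. Second, the notation $g(x-\cdot)$ and $\supp(k)+\supp(g)$ silently assumes $G$ abelian; the statement is phrased for general locally compact $G$, although the paper only ever applies it with $G=\R^{2d}$, so this is cosmetic rather than substantive.
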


    \section{Definition and properties of measure-operator convolutions}\label{sec:def_prop_measure_operator_conv}
    In this section we first show that the results recapitulated in Section \ref{sec:prelim_convolutions} apply to measures on $\R^{2d}$ and the Schatten $p$-class of operators (for $p < \infty$) on $L^2(\R^d)$ and then deduce elementary properties of the resulting operators, mostly using Theorem \ref{thm:weak_def_of_measure_operator_convolution} below giving the weak action of measure-operator convolutions.
    
    \subsection{Defining measure-operator convolutions for trace-class operators}
    Our main task in this section is to show that the assumptions of Theorem \ref{thm:existence_convolution} are satisfied for the representation
    \begin{align}\label{eq:rep_def}
        z \mapsto \rho(z),\qquad\rho(z)S = \pi(z)S \pi(z)^*    
    \end{align}
    of the time-frequency plane $\R^{2d}$ on the space of trace-class operators $\mathcal{S}^1$. In fact, we have 
    \begin{lemma}
    The mapping $\rho$ defined by \eqref{eq:rep_def} is a strongly continuous and isometric representation of $\R^{2d}$.
    \end{lemma}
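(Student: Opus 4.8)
The plan is to verify the four requirements of Definition~\ref{def:isometric_rep} in turn, the strong continuity being the only point that requires real work. Linearity of $S \mapsto \rho(z)S$ is immediate from bilinearity of operator composition, and $\rho(0)S = S$ since $\pi(0) = I_{L^2}$. For the homomorphism property one has to remember that $\pi$ is only a \emph{projective} representation: $\pi(z)\pi(w) = c(z,w)\,\pi(z+w)$ for a scalar $c(z,w)$ with $|c(z,w)| = 1$. Because each $\pi(z)$ is unitary, $\pi(z)^* = \pi(z)^{-1}$, and conjugation annihilates the cocycle:
\begin{align*}
\rho(z+w)S &= \pi(z+w)\,S\,\pi(z+w)^* = \overline{c(z,w)}\,\pi(z)\pi(w)\,S\,\big(\overline{c(z,w)}\,\pi(z)\pi(w)\big)^* \\
&= |c(z,w)|^2\,\pi(z)\pi(w)\,S\,\pi(w)^*\pi(z)^* = \rho(z)\big(\rho(w)S\big).
\end{align*}
Thus $\rho$ is a genuine representation. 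For the isometry property, unitarity of $\pi(z)$ gives $|\rho(z)S| = |\pi(z)S\pi(z)^*| = \pi(z)|S|\pi(z)^*$, and cyclicity of the trace yields $\|\rho(z)S\|_{\mathcal{S}^1} = \tr\big(\pi(z)|S|\pi(z)^*\big) = \tr\big(|S|\,\pi(z)^*\pi(z)\big) = \tr(|S|) = \|S\|_{\mathcal{S}^1}$; in particular $\rho(z)$ maps $\mathcal{S}^1$ into itself.

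For strong continuity, i.e. $\|\rho(z)S - S\|_{\mathcal{S}^1} \to 0$ as $z \to 0$ for every $S \in \mathcal{S}^1$, I would first reduce to rank-one operators. For $S = \psi \otimes \phi$ a direct computation gives $\pi(z)(\psi \otimes \phi)\pi(z)^* = (\pi(z)\psi)\otimes(\pi(z)\phi)$, so inserting the cross term $\psi \otimes (\pi(z)\phi)$ and using $\|u \otimes v\|_{\mathcal{S}^1} = \|u\|\,\|v\|$ together with unitarity of $\pi(z)$,
\begin{align*}
\big\| \rho(z)(\psi\otimes\phi) - \psi\otimes\phi \big\|_{\mathcal{S}^1} \leq \|\pi(z)\psi - \psi\|\,\|\phi\| + \|\psi\|\,\|\pi(z)\phi - \phi\|.
\end{align*}
This goes to $0$ because $z \mapsto \pi(z)$ is strongly continuous on $L^2(\R^d)$: writing $\pi(z) = M_\omega T_x$ one has $\|\pi(z)\psi - \psi\| \leq \|T_x\psi - \psi\| + \|M_\omega\psi - \psi\|$, the first term vanishing by continuity of translation in $L^2$ and the second by dominated convergence. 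By linearity the same conclusion holds for every finite-rank operator.

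Finally, since finite-rank operators are dense in $\mathcal{S}^1$, a standard $3\varepsilon$ argument completes the proof: given $\varepsilon > 0$ choose a finite-rank $F$ with $\|S - F\|_{\mathcal{S}^1} < \varepsilon/3$, and then, using the isometry property to bound the first term,
\begin{align*}
\|\rho(z)S - S\|_{\mathcal{S}^1} \leq \|\rho(z)(S-F)\|_{\mathcal{S}^1} + \|\rho(z)F - F\|_{\mathcal{S}^1} + \|F - S\|_{\mathcal{S}^1} < \frac{2\varepsilon}{3} + \|\rho(z)F - F\|_{\mathcal{S}^1},
\end{align*}
where the last term is $< \varepsilon/3$ once $z$ is close enough to $0$. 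The main obstacle is thus not any single hard estimate but organizing the strong-continuity argument correctly: reducing to the rank-one case, invoking density of finite-rank operators in the trace norm, and noting at the outset that the projectivity of $\pi$ is harmless because it disappears under conjugation.
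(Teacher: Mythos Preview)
Your proof is correct and in fact more self-contained than the paper's. The verification of the representation axioms is essentially the same, except that you are more careful on two points the paper glosses over: you explicitly track the cocycle $c(z,w)$ coming from the projectivity of $\pi$ and check that it cancels under conjugation (the paper simply writes $\pi(z_1)\pi(z_2)S\pi(z_2)^*\pi(z_1)^* = \pi(z_1 z_2)S\pi(z_1 z_2)^*$ without comment), and you verify the isometry $\|\rho(z)S\|_{\mathcal{S}^1} = \|S\|_{\mathcal{S}^1}$ via cyclicity of the trace, which the paper's proof does not spell out.

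The genuine difference is in the strong continuity. The paper disposes of it in one line by citing \cite[Lemma~2.1]{Bekka1990}, whereas you supply a direct argument: reduce to rank-one operators $\psi\otimes\phi$, use $\rho(z)(\psi\otimes\phi)=(\pi(z)\psi)\otimes(\pi(z)\phi)$ together with strong continuity of $z\mapsto\pi(z)$ on $L^2$, then pass to general $S\in\mathcal{S}^1$ by density of finite-rank operators and a $3\varepsilon$ argument exploiting the isometry you just proved. Your route is entirely elementary and avoids an external reference; the paper's route is shorter but black-boxes the estimate. Both are valid, and your version has the pedagogical advantage of making visible exactly where the trace-class norm and the unitarity of $\pi(z)$ are used.
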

    \begin{proof}
        We first verify linearity, identity preservation and associativity of $\rho$ as
        \begin{align*}
            \rho(z)(\alpha S_1 + \beta S_2) &= \pi(z)(\alpha S_1 + \beta S_2)\pi(z)^* = \alpha \rho(z)S_1 + \beta \rho(z)S_2,\\
            \rho(0) S &= \pi(0) S \pi(0)^* = S,\\
            \rho(z_1)\big(\rho(z_2) S\big) &= \pi(z_1) \big(\pi(z_2) S \pi(z_2)^*\big) \pi(z_1)^* = \pi(z_1 z_2) S \pi(z_1 z_2)^* = \rho(z_1 z_2) S.
        \end{align*}
    	The remaining continuity property follows from \cite[Lemma 2.1]{Bekka1990}.
    \end{proof}
    
    We now obtain the following corollary by applying Theorem \ref{thm:existence_convolution} and Theorem \ref{thm:conv_unique_if_essential}.
    \begin{corollary}\label{corollary:uniqueness}
        If we identify the map $\bullet_\rho : \R^{2d} \times \mathcal{S}^1 \to \mathcal{S}^1,\, (z, S) \mapsto \pi(z) S \pi(z)^*$ with $(\delta_z, S) \mapsto \pi(z) S \pi(z)^*$, there exists an extension to $M(\R^{2d}) \times \mathcal{S}^1$ which satisfies
        $$
        \Vert \mu \bullet_\rho S \Vert_{\mathcal{S}^1} \leq \Vert \mu \Vert_M \Vert S \Vert_{\mathcal{S}^1}\qquad \textrm{for all }\mu \in M(\R^{2d})\textrm{ and } S \in \mathcal{S}^1,
        $$
        and makes $\mathcal{S}^1$ a Banach module over $M(\R^{2d})$. Moreover, this extension is unique among extensions which turn $\mathcal{S}^1$ into an essential $L^1(\R^{2d})$-module upon embedding $L^1(\R^{2d})$ in $M(\R^{2d})$.
    \end{corollary}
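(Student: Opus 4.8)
The plan is to apply Theorem \ref{thm:existence_convolution} and Theorem \ref{thm:conv_unique_if_essential} verbatim with the locally compact abelian group $G = \R^{2d}$, the Banach space $\BspN = (\mathcal{S}^1, \Vert \cdot \Vert_{\mathcal{S}^1})$, and the representation $\rho$ of \eqref{eq:rep_def}. Every hypothesis needed has already been verified: $\R^{2d}$ is a locally compact abelian group, $\mathcal{S}^1$ equipped with the trace norm is a Banach space, and the Lemma immediately preceding the corollary shows that $\rho(z)S = \pi(z)S\pi(z)^*$ defines a strongly continuous isometric representation of $\R^{2d}$ on $\mathcal{S}^1$ — the isometry being a consequence of $\pi(z)$ being unitary, so that conjugation by $\pi(z)$ leaves the singular values, hence the trace norm, invariant. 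Thus the corollary is obtained as a direct specialization, and the embedding $z \mapsto \delta_z$ of $\R^{2d}$ into $M(\R^{2d})$ is what identifies $(z,S) \mapsto \pi(z)S\pi(z)^*$ with $(\delta_z, S) \mapsto \pi(z)S\pi(z)^*$.

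First I would invoke Theorem \ref{thm:existence_convolution} for this data: it produces the bilinear mapping $\bullet_\rho : M(\R^{2d}) \times \mathcal{S}^1 \to \mathcal{S}^1$ extending $\delta_z \bullet_\rho S = \rho(z)S = \pi(z)S\pi(z)^*$, makes $\mathcal{S}^1$ a Banach module over $(M(\R^{2d}), \Vert \cdot \Vert_M)$, and delivers the norm bound $\Vert \mu \bullet_\rho S \Vert_{\mathcal{S}^1} \leq \Vert \mu \Vert_M \Vert S \Vert_{\mathcal{S}^1}$, which is exactly the displayed estimate of the corollary. Then I would invoke Theorem \ref{thm:conv_unique_if_essential} for the same data: it gives that this extension is the unique $w^*$-continuous (on bounded, tight families) bilinear map $M(\R^{2d}) \times \mathcal{S}^1 \to \mathcal{S}^1$ turning $\mathcal{S}^1$ into an essential $L^1(\R^{2d})$-module, once $L^1(\R^{2d})$ is embedded in $M(\R^{2d})$ as the closed ideal of Lebesgue-absolutely-continuous measures, as in the Remark following Theorem \ref{thm:existence_convolution}. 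This is precisely the uniqueness clause of the corollary.

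I do not expect a genuine obstacle here: the substance was discharged in the preceding Lemma, and what remains is bookkeeping to confirm that the abstract framework of Section \ref{sec:prelim_convolutions} applies. The only point deserving a word of care is that the uniqueness asserted is uniqueness among extensions making $\mathcal{S}^1$ an \emph{essential} $L^1(\R^{2d})$-module (equivalently, among $w^*$-continuous extensions in the stated sense), not uniqueness among arbitrary bounded bilinear extensions; the wording of the corollary should therefore mirror that of Theorem \ref{thm:conv_unique_if_essential}, which it does.
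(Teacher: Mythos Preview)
Your proposal is correct and matches the paper's own proof exactly: the paper simply states that the corollary follows by applying Theorem \ref{thm:existence_convolution} and Theorem \ref{thm:conv_unique_if_essential}, the hypotheses having been verified in the preceding Lemma. Your additional remarks on the isometry and the precise sense of uniqueness are accurate elaborations but not required.
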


    From here on we write $\star$ for $\bullet_\rho$ and call it the \emph{measure-operator convolution} or \emph{function-operator convolution} if the measure is absolutely continuous with respect to Lebesgue measure. We also make the definition $S \star \mu := \mu \star S$ so that we can convolve with measures from both sides. 
    
    Next we show how we can get an explicit formula for the weak action of a measure-operator convolution. This is useful for computations and shows that our approach appropriately extends the function-operator convolutions of quantum harmonic analysis defined with Bochner integrals which can be found in \cite{Werner1984, Luef2018, Berge2023}.
    \begin{theorem}\label{thm:weak_def_of_measure_operator_convolution}
        Let $S \in \mathcal{S}^1$ and $\mu \in M(\R^{2d})$. Then
        \begin{align*}
            \big\langle (\mu \star S) \psi, \phi \big\rangle = \int_{\R^{2d}} \big\langle \pi(z) S \pi(z)^* \psi, \phi \big\rangle\, d\mu(z) \qquad \text{ for all }\psi, \phi \in L^2(\R^d).
        \end{align*}
    \end{theorem}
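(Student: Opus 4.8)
The plan is to read off the identity from the defining net \eqref{eq:conv_def_bupu} together with Lemma \ref{lemma:measure_function_convolution_limit}, after checking that a certain auxiliary function lies in $C_0(\R^{2d})$. Fix $\psi,\phi\in L^2(\R^d)$ and set
$$
g(z) := \big\langle \pi(z)S\pi(z)^*\psi,\phi\big\rangle, \qquad z\in\R^{2d}.
$$
For a BUPU $\Psi=\{\psi_i\}_{i\in I}$ with nodes $z_i$, the discretized measure $D_\Psi\mu=\sum_{i\in I}\mu(\psi_i)\delta_{z_i}$ converges absolutely in $M(\R^{2d})$ (the bounded overlap property gives $\sum_i|\mu(\psi_i)|\le C\|\mu\|_M$), so by bilinearity and boundedness of the module action from Corollary \ref{corollary:uniqueness} one has $D_\Psi\mu\star S=\sum_{i\in I}\mu(\psi_i)\,\pi(z_i)S\pi(z_i)^*$ with convergence in $\mathcal{S}^1$. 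Since $T\mapsto\langle T\psi,\phi\rangle$ is a bounded linear functional on $\mathcal{S}^1$ (indeed $|\langle T\psi,\phi\rangle|\le\|T\|_{\mathcal{S}^\infty}\|\psi\|\,\|\phi\|\le\|T\|_{\mathcal{S}^1}\|\psi\|\,\|\phi\|$ by the monotonicity of the Schatten norms), this yields $\big\langle (D_\Psi\mu\star S)\psi,\phi\big\rangle=\sum_{i\in I}\mu(\psi_i)\,g(z_i)=D_\Psi\mu(g)$.

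The key step is to verify that $g\in C_0(\R^{2d})$. I would expand $S$ in its singular value decomposition $S=\sum_n s_n\,(\xi_n\otimes\eta_n)$ with $\|\xi_n\|=\|\eta_n\|=1$ and $\sum_n s_n=\|S\|_{\mathcal{S}^1}<\infty$. Then $\pi(z)S\pi(z)^*=\sum_n s_n\,(\pi(z)\xi_n)\otimes(\pi(z)\eta_n)$, so that
$$
g(z)=\sum_n s_n\, V_{\eta_n}\psi(z)\,\overline{V_{\xi_n}\phi(z)}.
$$
Each cross-term $V_{\eta_n}\psi\,\overline{V_{\xi_n}\phi}$ is a product of short-time Fourier transforms of $L^2$ functions with $L^2$ windows, hence lies in $C_0(\R^{2d})$, and $\sum_n s_n\|V_{\eta_n}\psi\|_\infty\|V_{\xi_n}\phi\|_\infty\le\|\psi\|\,\|\phi\|\sum_n s_n<\infty$ by Cauchy--Schwarz, so the series converges uniformly. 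Therefore $g\in C_0(\R^{2d})$. I expect this to be the only genuinely nontrivial point; the remainder is bookkeeping.

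Finally, pass to the limit $|\Psi|\to0$. By the definition \eqref{eq:conv_def_bupu} one has $D_\Psi\mu\star S\to\mu\star S$ in $\mathcal{S}^1$, so by continuity of the functional $T\mapsto\langle T\psi,\phi\rangle$ the left-hand side converges: $\big\langle (D_\Psi\mu\star S)\psi,\phi\big\rangle\to\big\langle (\mu\star S)\psi,\phi\big\rangle$. On the other hand, since $g\in C_0(\R^{2d})$, Lemma \ref{lemma:measure_function_convolution_limit} gives $D_\Psi\mu(g)\to\mu(g)=\int_{\R^{2d}}g(z)\,d\mu(z)$. Equating the two limits of $\big\langle (D_\Psi\mu\star S)\psi,\phi\big\rangle=D_\Psi\mu(g)$ produces
$$
\big\langle (\mu\star S)\psi,\phi\big\rangle=\int_{\R^{2d}}\big\langle\pi(z)S\pi(z)^*\psi,\phi\big\rangle\,d\mu(z),
$$
which is the assertion.
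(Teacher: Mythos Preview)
Your proposal is correct and follows essentially the same route as the paper: both define $g(z)=\langle\pi(z)S\pi(z)^*\psi,\phi\rangle$, identify $\langle(D_\Psi\mu\star S)\psi,\phi\rangle=D_\Psi\mu(g)$, verify $g\in C_0(\R^{2d})$, and then invoke Lemma~\ref{lemma:measure_function_convolution_limit} together with the $\mathcal{S}^1$-convergence from \eqref{eq:conv_def_bupu}. The only difference is in how $g\in C_0(\R^{2d})$ is checked: the paper argues abstractly via compactness of $S$ and the weak null-convergence of $\pi(z)^*\psi$, whereas you expand $S$ in its singular value decomposition and obtain $g$ as a uniformly convergent series of products of STFTs---both arguments ultimately rest on the fact that $V_\varphi\psi\in C_0(\R^{2d})$ for $\psi,\varphi\in L^2(\R^d)$.
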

    \begin{proof}
        Firstly, $\mu \star S$ is defined as the limit of a sequence of $\mathcal{S}^1$ operators which converge in the $\mathcal{S}^1$ norm by Theorem \ref{thm:existence_convolution}. In particular, they converge in the operator norm. Hence, using \eqref{eq:conv_def_bupu}, we have
        \begin{align*}
            \big\langle (\mu \star S) \psi, \phi \big\rangle &= \left\langle \left(\lim_{|\Psi| \to 0} D_\Psi \mu \star S\right)\psi, \phi \right\rangle\\
            &= \lim_{|\Psi| \to 0}\big\langle  (D_\Psi \mu \star S) \psi , \phi \big\rangle\\
            &= \lim_{|\Psi| \to 0}\left\langle  \left(\sum_{i \in I} \mu(\psi_i) \delta_{z_i} \star S\right) \psi , \phi \right\rangle\\
            &= \lim_{|\Psi| \to 0} \sum_{i \in I} \mu(\psi_i) \big\langle \pi(z_i) S \pi(z_i)^* \psi , \phi \big\rangle\\
            &= \lim_{|\Psi| \to 0} D_\Psi \mu \left( \big\langle \pi(\cdot) S \pi(\cdot)^* \psi, \phi\big\rangle\right)\\
            &= \int_{\R^{2d}} \big\langle \pi(z) S \pi(z)^*\psi, \phi \big\rangle\, d\mu(z),
        \end{align*}
        where we used Lemma \ref{lemma:measure_function_convolution_limit} in the last step. To justify this we still need to show that $\langle \pi(\cdot) S \pi(\cdot)^* \psi, \phi\rangle \in C_0(\R^{2d})$. The continuity follows from $\pi$ being a representation and $S$ being a bounded operator. For vanishing at infinity, by an application of Cauchy-Schwarz we deduce that it suffices to show that
        $$
        \Vert \pi(z) S \pi(z)^* \psi \Vert = \Vert S \pi(z)^* \psi \Vert \to 0\quad \textrm{as }z \to \infty.
        $$
        Since $S$ is a compact operator, this follows if we can show that $\pi(z)^* \psi$ goes to zero weakly as $z \to \infty$. This is equivalent to the STFT vanishing at infinity by
        $$
        \langle \pi(z)^*\psi, \phi \rangle = \langle \psi, \pi(z) \phi \rangle = V_\phi \psi(z),
        $$
        which is a well-known fact \cite{grochenig_book}.
    \end{proof}
    
    \subsection{Extending to the Schatten classes}
    Having defined measure-operator convolutions for trace-class operators, we wish to extend the definition to $\mathcal{S}^p$ for $1 < p \leq \infty$. The first step in this direction is defining $\mu \star T$ for $T \in \mathcal{S}^\infty$ using Schatten duality in a way that generalizes the function-operator case (see \cite{Werner1984} and \cite[Proposition 4.2]{Luef2018}) as
    \begin{align*}
        \big\langle \mu \star T, S \big\rangle_{\mathcal{S}^\infty,\, \mathcal{S}^1} = \big\langle \mu, S \star \check{T}^* \big\rangle_{M,\, C_b}.
    \end{align*}
    Here $S$ is taken to be in $\mathcal{S}^1$ and $C_b(\R^{2d})$ is the space of continuous bounded functions. The fact that the quantity in the right hand side is finite follows from that operator-operator convolutions map into $C_b(\R^{2d})$ \cite{Werner1984}. To show that $\mu \star T \in \mathcal{S}^\infty$, we estimate
    \begin{align*}
        \Vert \mu \star T \Vert_{\mathcal{S}^\infty} &= \sup_{\Vert S \Vert_{\mathcal{S}^1} = 1} \big|\big\langle \mu \star T, S \big\rangle_{\mathcal{S}^\infty,\, \mathcal{S}^1 }\big|\\
        &=\sup_{\Vert S \Vert_{\mathcal{S}^1} = 1} \big|\big\langle \mu, S \star \check{T}^* \big\rangle_{M,\, C_b} \big|\\
        &\leq \sup_{\Vert S \Vert_{\mathcal{S}^1} = 1} \Vert \mu \Vert_{M} \Vert S \star \check{T}^* \Vert_{L^\infty} \leq \Vert \mu \Vert_{M} \Vert T \Vert_{\mathcal{S}^\infty},
    \end{align*}
    where we used Proposition \ref{prop:op_conv_properties} \ref{item:op_op_conv_interpol} for this last step. By a version of Riesz-Thorin interpolation for Schatten classes \cite[Theorem 2.10]{Simon2005}, we immediately get the following result on the Schatten boundedness.
    \begin{proposition}\label{prop:interpolated_mapping_meas_op}
        Let $\mu \in M(\R^{2d})$ and $S \in \mathcal{S}^p$ for $1 \leq p \leq \infty$, then
        \begin{align*}
            \Vert \mu \star S \Vert_{\mathcal{S}^p} \leq \Vert \mu \Vert_M \Vert S \Vert_{\mathcal{S}^p}.
        \end{align*}
    \end{proposition}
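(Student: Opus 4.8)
The plan is to \textbf{interpolate between the two endpoint estimates} that have just been recorded: for $p=1$, Corollary~\ref{corollary:uniqueness} gives $\|\mu \star S\|_{\mathcal{S}^1} \leq \|\mu\|_M \|S\|_{\mathcal{S}^1}$, and the displayed computation immediately preceding the statement gives $\|\mu \star T\|_{\mathcal{S}^\infty} \leq \|\mu\|_M \|T\|_{\mathcal{S}^\infty}$ for $T \in \mathcal{S}^\infty$. Fix $\mu \in M(\R^{2d})$ and consider the linear map $T_\mu \colon S \mapsto \mu \star S$. It is defined on all of $\mathcal{S}^\infty = \mathcal{S}^1 + \mathcal{S}^\infty$ by the duality formula, it restricts on $\mathcal{S}^1$ to the module action of Corollary~\ref{corollary:uniqueness}, and by the two endpoint estimates it is bounded $\mathcal{S}^1 \to \mathcal{S}^1$ and $\mathcal{S}^\infty \to \mathcal{S}^\infty$, in each case with operator norm at most $\|\mu\|_M$.

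Before interpolating one must check that the two descriptions of $\mu \star S$ — the one for trace-class $S$ and the one for bounded $S$ — coincide on the overlap $\mathcal{S}^1 \cap \mathcal{S}^\infty = \mathcal{S}^1$, so that $T_\mu$ is a single well-defined operator on the compatible couple $(\mathcal{S}^1, \mathcal{S}^\infty)$. For $S, R \in \mathcal{S}^1$ one writes $\langle \mu \star S, R\rangle_{\mathcal{S}^\infty, \mathcal{S}^1} = \tr\!\big((\mu \star S) R^*\big)$, inserts the weak formula of Theorem~\ref{thm:weak_def_of_measure_operator_convolution}, and uses Fubini — legitimate since $z \mapsto \tr\!\big(\pi(z) S \pi(z)^* R^*\big)$ is bounded and $|\mu|$-integrable — to recognize the result as $\int_{\R^{2d}} R \star \check{S}^*(z)\, d\mu(z) = \langle \mu, R \star \check{S}^*\rangle_{M, C_b}$, which is precisely the relation defining the duality extension. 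Alternatively, both maps agree on point measures $\delta_z$ and are suitably weak-$*$ continuous on bounded tight families, hence coincide by the uniqueness built into Theorem~\ref{thm:existence_convolution}. Either way $T_\mu$ is consistently defined on $\mathcal{S}^\infty$.

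Finally, apply the complex interpolation theorem for Schatten ideals, \cite[Theorem~2.10]{Simon2005}, which gives $[\mathcal{S}^1, \mathcal{S}^\infty]_\theta = \mathcal{S}^p$ with $\tfrac{1}{p} = 1-\theta$. Since $T_\mu$ has norm at most $\|\mu\|_M$ on both endpoints, it has norm at most $\|\mu\|_M^{\,1-\theta}\|\mu\|_M^{\,\theta} = \|\mu\|_M$ on every intermediate space, which is exactly the asserted bound $\|\mu \star S\|_{\mathcal{S}^p} \leq \|\mu\|_M \|S\|_{\mathcal{S}^p}$ for $1 \leq p \leq \infty$. The only step requiring any genuine work is the consistency verification of the previous paragraph; once that is in place the conclusion is a direct invocation of the interpolation theorem, and I expect that consistency check to be the only place where a careful reader would want more detail.
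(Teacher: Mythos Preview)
Your proposal is correct and follows essentially the same approach as the paper: establish the endpoint bounds at $p=1$ (Corollary~\ref{corollary:uniqueness}) and $p=\infty$ (the duality computation preceding the proposition), then invoke Riesz--Thorin interpolation for Schatten classes \cite[Theorem~2.10]{Simon2005}. The paper states this in a single sentence without the consistency verification you supply; your added check that the two definitions of $\mu\star S$ agree on $\mathcal{S}^1$ is a welcome detail the paper leaves implicit.
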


    We can also go further than this and extend the weak formulation from Theorem \ref{thm:weak_def_of_measure_operator_convolution} to $\mathcal{S}^p$ for $1 \leq  p < \infty$. The proof is a standard approximation argument but we write it out for completeness.
    \begin{corollary}\label{cor:meas_op_conv_weak_p}
        Let $S \in \mathcal{S}^p$ for $1 \leq p < \infty$ and $\mu \in M(\R^{2d})$, then
        \begin{align*}
            \big\langle (\mu \star S) \psi, \phi \big\rangle = \int_{\R^{2d}} \big\langle \pi(z)S \pi(z)^* \psi, \phi\big\rangle\, d\mu(z) \qquad \text{ for all }\psi, \phi \in L^2(\R^d).
        \end{align*}
    \end{corollary}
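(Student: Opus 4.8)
The plan is to reduce to the trace-class case handled in Theorem \ref{thm:weak_def_of_measure_operator_convolution} via a density argument, exploiting that $p$ is finite. First I would fix $S \in \mathcal{S}^p$ with $1 \le p < \infty$ and $\psi, \phi \in L^2(\R^d)$, and choose a sequence $(S_n)_n$ of finite-rank operators — which are in particular trace-class — with $\Vert S_n - S \Vert_{\mathcal{S}^p} \to 0$; such a sequence exists precisely because finite-rank operators are dense in $\mathcal{S}^p$ for $p < \infty$.

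For the left-hand side, Proposition \ref{prop:interpolated_mapping_meas_op} gives $\Vert \mu \star S_n - \mu \star S \Vert_{\mathcal{S}^p} \le \Vert \mu \Vert_M \Vert S_n - S \Vert_{\mathcal{S}^p} \to 0$; since $\Vert \cdot \Vert_{\mathcal{S}^\infty} \le \Vert \cdot \Vert_{\mathcal{S}^p}$ this convergence also holds in operator norm, so $\langle (\mu \star S_n)\psi, \phi \rangle \to \langle (\mu \star S)\psi, \phi \rangle$. For the right-hand side, I would first note that $z \mapsto \langle \pi(z) S \pi(z)^* \psi, \phi \rangle$ is continuous (because $\pi$ is strongly continuous and $S$ is bounded) and bounded by $\Vert S \Vert_{\mathcal{S}^\infty} \Vert \psi \Vert \Vert \phi \Vert$, hence $\mu$-integrable as $\mu$ is a finite measure — and likewise for each $S_n$. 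Bounding the difference of integrands uniformly in $z$ by $\Vert S_n - S \Vert_{\mathcal{S}^\infty} \Vert \psi \Vert \Vert \phi \Vert \le \Vert S_n - S \Vert_{\mathcal{S}^p} \Vert \psi \Vert \Vert \phi \Vert$, one obtains
$$
\left| \int_{\R^{2d}} \big\langle \pi(z)(S_n - S)\pi(z)^* \psi, \phi \big\rangle \, d\mu(z) \right| \le \Vert \mu \Vert_M \, \Vert S_n - S \Vert_{\mathcal{S}^p} \, \Vert \psi \Vert \, \Vert \phi \Vert \longrightarrow 0 .
$$
Applying Theorem \ref{thm:weak_def_of_measure_operator_convolution} to each $S_n$ and passing to the limit on both sides then yields the identity.

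I do not expect a genuine obstacle here; the one subtlety worth flagging is that the argument relies essentially on $p < \infty$ — both for the density of finite-rank operators in $\mathcal{S}^p$ and because for $p = \infty$ the convolution $\mu \star T$ was only introduced through Schatten duality rather than by the present weak formula, so the statement simply does not extend to that endpoint.
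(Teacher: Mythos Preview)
Your proposal is correct and follows essentially the same approach as the paper: approximate $S$ by finite-rank (hence trace-class) operators, apply Theorem~\ref{thm:weak_def_of_measure_operator_convolution} to each approximant, and pass to the limit on both sides using $\mathcal{S}^p$-convergence and the fact that $\Vert \cdot \Vert_{\mathcal{S}^\infty} \le \Vert \cdot \Vert_{\mathcal{S}^p}$. The only cosmetic difference is that the paper phrases the convergence of the integral via dominated convergence while you use a direct uniform bound on the integrand, and your remark on the $p = \infty$ endpoint matches the paper's handling of that case.
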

    \begin{proof}  
        We approximate $S \in \mathcal{S}^p$ by a sequence of finite rank operators $S_N$ consisting of the first $N$ terms of the spectral decomposition of $S$ so that $S_N \in \mathcal{S}^1$ and $S_N \to S \in \mathcal{S}^p$ as $N \to \infty$ since $p < \infty$. In particular, $S_N \to S$ in $\mathcal{L}(L^2)$ also in this case.

        By Cauchy-Schwarz, $\langle (\mu \star S) \psi, \phi \rangle$ exists and our goal is to show that it is equal to the integral in the formulation of the corollary. The first step in this direction is noting that $\langle (\mu \star S_N) \psi, \phi \rangle \to \langle (\mu \star S) \psi, \phi \rangle$ as $N \to \infty$ by Cauchy-Schwarz and the fact that $S_N \to S$ in $\mathcal{L}(L^2)$. Next we use this to write
        \begin{align*}
            &\left|\big\langle (\mu \star S)\psi, \phi \big \rangle  - \int_{\R^{2d}} \big\langle \pi(z)S \pi(z)^* \psi, \phi\big\rangle \, d\mu(z)\right|\\
            &\hspace{7mm}= \left| \lim_{N \to \infty} \big\langle (\mu \star S_N)\psi, \phi \big\rangle - \int_{\R^{2d}} \big\langle \pi(z)S \pi(z)^* \psi, \phi \big\rangle \, d\mu(z)  \right|\\
            &\hspace{7mm}\leq \lim_{N \to \infty} \int_{\R^{2d}}\big| \big\langle [\pi(z) S_N \pi(z)^* - \pi(z) S \pi(z)^*] \psi, \phi \big\rangle \big|\, d|\mu|(z)\\
            &\hspace{7mm}= \int_{\R^{2d}}  \lim_{N \to \infty} \big| \big\langle [\pi(z) S_N \pi(z)^* - \pi(z) S \pi(z)^*] \psi, \phi \big\rangle \big|\, d|\mu|(z),
        \end{align*}
        where used Theorem \ref{thm:weak_def_of_measure_operator_convolution} for the inequality and moved the limit inside the integral using the dominated convergence theorem with dominating function $2 \Vert S \Vert_{\mathcal{L}(L^2)} \Vert \psi \Vert_{L^2} \Vert \phi \Vert_{L^2}$ which is integrable by the boundedness of $\mu$. From here our desired conclusion follows as the integrand can be seen to be zero upon employing Cauchy-Schwarz and using elementary properties of $\pi$ as well as the fact that $S_N \to S$ in $\mathcal{L}(L^2)$ as $N \to \infty$.
    \end{proof}
    Lastly for $p=\infty$ we \emph{define} the weak action $\langle (\mu \star S) \psi, \phi \rangle$ to agree with the integral formulations in Theorem \ref{thm:weak_def_of_measure_operator_convolution} and Corollary \ref{cor:meas_op_conv_weak_p} since we cannot make a denseness argument. This is precisely what is done for function-operator convolutions in \cite{Luef2018}.
    
    \subsection{Properties of measure-operator convolutions}
    From the weak formulation of measure-operator convolutions in Corollary \ref{cor:meas_op_conv_weak_p} and the extension to $p = \infty$, we can deduce generalizations of many of the properties from Section \ref{sec:preliminaries}. Specifically we are able to generalize relevant parts of Proposition \ref{prop:op_conv_properties}.
    \begin{proposition}\label{prop:measure_op_conv_properties}
        Let $\mu \in M(\R^{2d})$, $S \in \mathcal{S}^p$, $T \in \mathcal{S}^q$ for $1 \leq p \leq \infty$ where $\frac{1}{p} + \frac{1}{q} = 1$ and $R$ be a compact operator. Then
        \begin{enumerate}[label=(\roman*)]
    		\item \label{item:me_op_pos} $\mu \star S$ is positive if $\mu$ and $S$ are positive,
    		\item \label{item:me_op_adjoint}$(\mu \star S)^* = \bar{\mu} \star S^*$,
    		\item \label{item:me_op_check} $(\mu \star S)\check{\,} = \check{\mu} \star \check{S}$,
    		\item \label{item:me_op_trace}$\tr(\mu \star S) = \mu(\R^{2d}) \tr(S)$ if $p = 1$,
    		\item \label{item:me_op_comp}$\mu \star R$ is compact, 
    		\item \label{item:me_op_ass1}$(\mu \star S) \star T = \mu * (S \star T)$,
                \item \label{item:me_op_ass2}$(\mu * \nu) \star S = \mu  \star (\nu \star S)$.
    	\end{enumerate}
    \end{proposition}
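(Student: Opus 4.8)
The strategy is to derive all seven items from the weak formula for $\mu \star S$, which by Corollary~\ref{cor:meas_op_conv_weak_p} holds for $1 \le p < \infty$ and is the \emph{definition} when $p = \infty$, namely $\langle (\mu \star S)\psi, \phi \rangle = \int_{\R^{2d}} \langle \alpha_z(S)\psi, \phi \rangle\, d\mu(z)$ for all $\psi, \phi \in L^2(\R^d)$, where $\alpha_z(S) = \pi(z) S \pi(z)^*$. Every claimed identity between operators is then tested against vectors and reduced to a statement about scalar integrals against the bounded measure $\mu$, for which I would use only: linearity of the integral; the cocycle relation $\pi(z+w) = c(z,w)\pi(z)\pi(w)$ with $|c(z,w)| = 1$ (so that $\alpha_{z+w} = \alpha_z \circ \alpha_w$, the phases being killed by the unitary conjugation); the relation $\check{\alpha_z(S)} = \alpha_{-z}(\check S)$, which comes from $P\pi(z)P = \pi(-z)$ up to a unimodular constant and $P^*=P=P^{-1}$; and, wherever a double integral or a trace enters, a Fubini/Tonelli interchange justified by a uniform Schatten estimate together with $\Vert \mu \Vert_M < \infty$.

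For the items that are pure scalar-integral manipulations: \ref{item:me_op_pos} follows by taking $\phi = \psi$ and noting $\langle \alpha_z(S)\psi,\psi\rangle = \langle S\pi(z)^*\psi, \pi(z)^*\psi\rangle \ge 0$ when $S \ge 0$, so that the integral of a nonnegative integrand against a positive measure is nonnegative. For \ref{item:me_op_adjoint} I would conjugate the weak formula, using $\overline{\int f\,d\mu} = \int \bar f\, d\bar\mu$ and $\alpha_z(S)^* = \alpha_z(S^*)$. For \ref{item:me_op_check} I would write $(\mu\star S)\check{\,} = P(\mu\star S)P$, push $P$ through the weak formula to obtain $\int \check{\alpha_z(S)}\, d\mu(z) = \int \alpha_{-z}(\check S)\, d\mu(z)$ weakly, and substitute $z \mapsto -z$ using $\int g(z)\, d\check\mu(z) = \int g(-z)\, d\mu(z)$. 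For \ref{item:me_op_ass2} I would insert $\int f\, d(\mu * \nu) = \iint f(z+w)\, d\mu(z)\, d\nu(w)$ into the weak formula, split $\alpha_{z+w}(S) = \alpha_z(\alpha_w(S))$ by the cocycle relation, apply Fubini (the integrand is bounded by $\Vert S \Vert_{\mathcal{S}^\infty}\Vert \psi \Vert \Vert \phi \Vert$), and recognize the inner integral as $\langle \alpha_z(\nu\star S)\psi, \phi\rangle$; for $p = 1$ this is simply the module associativity already contained in Corollary~\ref{corollary:uniqueness}, and the weak formula extends it to all $\mathcal{S}^p$.

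Items \ref{item:me_op_trace} and \ref{item:me_op_comp} require a little more. For \ref{item:me_op_trace}, with $S \in \mathcal{S}^1$ we have $\mu \star S \in \mathcal{S}^1$ by Corollary~\ref{corollary:uniqueness}; I would expand $\tr(\mu\star S) = \sum_n \langle (\mu\star S)e_n, e_n\rangle$ over an orthonormal basis, insert the weak formula, and interchange $\sum_n$ with $\int d\mu$. This is legitimate because $\sum_n |\langle De_n, e_n\rangle| \le \Vert D \Vert_{\mathcal{S}^1}$ for every $D \in \mathcal{S}^1$ (expand $D$ in its singular value decomposition and apply Cauchy--Schwarz), so the double sum/integral is dominated by $\Vert S \Vert_{\mathcal{S}^1}\Vert \mu \Vert_M$; one is left with $\int \tr(\alpha_z(S))\, d\mu(z) = \tr(S)\, \mu(\R^{2d})$. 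For \ref{item:me_op_comp}, I would approximate the compact operator $R$ in operator norm by its finite-rank spectral truncations $R_N$; since each $R_N \in \mathcal{S}^1$, Corollary~\ref{corollary:uniqueness} gives $\mu\star R_N \in \mathcal{S}^1$, hence compact, while Proposition~\ref{prop:interpolated_mapping_meas_op} gives $\Vert \mu \star R - \mu\star R_N \Vert_{\mathcal{S}^\infty} \le \Vert \mu \Vert_M \Vert R - R_N \Vert_{\mathcal{S}^\infty} \to 0$, so $\mu\star R$ is an operator-norm limit of compact operators.

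I expect \ref{item:me_op_ass1} to be the main obstacle, because there $S \star T$ is an \emph{operator--operator} convolution, i.e.\ a function on $\R^{2d}$, so one must verify an identity of functions. Evaluating at a point $w$, $(\mu\star S)\star T(w) = \tr\big((\mu\star S)\, \alpha_w(\check T)\big)$; I would expand this trace over an orthonormal basis, insert the weak formula for $\mu\star S$, and aim for $\int \tr\big(\alpha_z(S)\, \alpha_w(\check T)\big)\, d\mu(z)$, after which the cocycle relation reduces the integrand to $(S\star T)(w - z)$ (using cyclicity of the trace and that $\pi(z)^*\pi(w)$ and $\pi(w)^*\pi(z)$ carry conjugate phases) and the result becomes $\big(\mu * (S\star T)\big)(w)$. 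The delicate point is the interchange of the trace with $\int d\mu$ when $T \notin \mathcal{S}^1$: here I would bound $\sum_n |\langle \alpha_z(S)\alpha_w(\check T) e_n, e_n\rangle|$ by $\Vert \alpha_z(S)\alpha_w(\check T) \Vert_{\mathcal{S}^1} \le \Vert S \Vert_{\mathcal{S}^p}\Vert T \Vert_{\mathcal{S}^q}$, uniformly in $z$ by Schatten--Hölder, so that Fubini/Tonelli against $|\mu|$ applies. The boundary case $p = \infty$ (so $T \in \mathcal{S}^1$, and the weak formula is a definition rather than a theorem) is handled in the same way, and one should also record that the two definitions of $\star$ agree on $\mathcal{S}^1 \subset \mathcal{S}^\infty$ so that the linear manipulations used in \ref{item:me_op_comp} and \ref{item:me_op_ass1} are justified.
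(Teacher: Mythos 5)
Your proposal is correct and follows essentially the same route as the paper: every item is reduced to the weak formula of Corollary \ref{cor:meas_op_conv_weak_p} (taken as the definition for $p=\infty$), with the same Fubini/Schatten--H\"older justifications for \ref{item:me_op_trace} and \ref{item:me_op_ass1} and the same finite-rank approximation for \ref{item:me_op_comp}. The only cosmetic difference is that for \ref{item:me_op_ass2} you run the double-integral computation uniformly in $p$, where the paper invokes the $M(\R^{2d})$-module structure for $p=1$ and a finite-rank approximation for $1<p<\infty$, reserving the explicit calculation for $p=\infty$.
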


    \begin{proof}
        Item \ref{item:me_op_pos}:\,\,Using the weak formulation we verify
        \begin{align*}
            \big\langle (\mu \star S)\psi, \psi \big\rangle = \int_{\R^{2d}} \big\langle S \pi(z)^*\psi, \pi(z)^* \psi \big\rangle\, d\mu(z) \geq 0,
        \end{align*}
        since the integral of a non-negative function with respect to a positive measure is non-negative.\\
        Item \ref{item:me_op_adjoint}:\,\, We verify the relation weakly as
        \begin{align*}
            \big\langle (\mu \star S)^* \psi, \phi \big\rangle &= \big\langle \psi, (\mu \star S)\phi \big\rangle = \overline{\big\langle (\mu \star S) \phi, \psi \big\rangle}\\
            &= \overline{\int_{\R^{2d}} \langle \pi(z) S \pi(z)^* \phi, \psi \rangle\,d\mu(z)}\\
            &= \int_{\R^{2d}} \overline{\langle \pi(z) S \pi(z)^* \phi, \psi \rangle}\, d\bar{\mu}(z)\\
            &= \int_{\R^{2d}} \langle \pi(z) S^* \pi(z)^* \psi, \phi \rangle\,d\bar{\mu}(z)\\
            &= \big\langle (\bar{\mu} \star S^*)\psi, \phi \big\rangle.
        \end{align*}
        Item \ref{item:me_op_check}:\,\,Using the same method we find
        \begin{align*}
            \langle P(\mu \star S) P\psi, \phi \rangle &= \langle (\mu \star S) P\psi, P\phi \rangle\\
            &= \int_{\R^{2d}} \langle \pi(z)S \pi(z)^* P\psi, P\phi \rangle \, d\mu(z)\\
            &= \int_{\R^{2d}} \langle \pi(-z)PSP \pi(-z)^* \psi, \phi \rangle \, d\mu(z)\\
            &= \left\langle (\check{\mu} \star \check{S}) \psi, \phi \right\rangle.
        \end{align*}
        The calculation in the middle step is detailed in \cite[Lemma 3.2 (5)]{Skrettingland_master_thesis}.\\
        Item \ref{item:me_op_trace}:\,\,We compute
        \begin{align*}
            \tr(\mu \star S) &= \sum_n \int_{\R^{2d}} \langle S \pi(z)^* e_n, \pi(z)^* e_n \rangle \,d\mu(z)\\
            &= \int_{\R^{2d}} \tr(S)\, d\mu(z) = \mu(\R^{2d}) \tr(S),
        \end{align*}
        where the use of Fubini is justified by $\mu \in M(\R^{2d})$ and \cite[Proposition 18.9]{conway2000a}.\\
        Item \ref{item:me_op_comp}:\,\,Since $R$ is compact, it is the limit in operator norm of a sequence of finite rank operators $(R_n)_n \subset \mathcal{S}^1$. For each $n$, $\mu \star R_n$ is compact by Corollary \ref{corollary:uniqueness}, hence
        $$
            \Vert \mu \star R - \mu \star R_n \Vert_{\mathcal{L}(L^2)} \leq \Vert \mu \Vert_M \Vert R-R_n \Vert_{\mathcal{L}(L^2)},
        $$
        where we used Proposition \ref{prop:interpolated_mapping_meas_op} with $p = \infty$. We conclude that $\mu \star R$ is the limit in operator norm of a sequence of compact operators and hence it too is compact.\\
        Item \ref{item:me_op_ass1}:\,\,We verify
        \begin{align*}
            ((\mu \star S) \star T)(z) &= \tr\left( (\mu \star S) \pi(z) \check{T} \pi(z)^* \right)\\
            &= \sum_n \langle (\mu \star S) \pi(z) \check{T} \pi(z)^* e_n, e_n\rangle\\
            &= \sum_n \int_{\R^{2d}} \big\langle \pi(y) S \pi(y)^* \pi(z) \check{T} \pi(z)^* e_n, e_n\big\rangle\, d\mu(y)\\
            &= \int_{\R^{2d}} (S \star T)(z-y) \,d\mu(y)\\
            &= (\mu * (S \star T))(z),
        \end{align*}
        where the use of Fubini is justified by Hölder's inequality for the Schatten $p$-classes of operators.
        \\
        Item \ref{item:me_op_ass2}:\,\,For $p=1$, this follows from the module structure of $\mathcal{S}^1$ over $M(\R^{2d})$ established in Theorem \ref{thm:existence_convolution} while for $1 < p < \infty$ it follows by approximating $S$ by finite rank operators as in the proof of Corollary \ref{cor:meas_op_conv_weak_p}. However for $p = \infty$ we need to do the full calculation of looking at the weak action and expanding both sides. A quick calculation shows that both of these quantities are equal to
        \begin{align*}
            \int_{\R^{2d}} \int_{\R^{2d}} \langle \pi(z)\pi(y) S \pi(y)^* \pi(z)^* \psi, \phi \rangle \,d\mu(z)\,d\nu(y),
        \end{align*}
        which yields the desired conclusion.
    \end{proof}
    Next we show how the convolution property of the Fourier-Wigner transform from \eqref{eq:fourier_wigner_def} carries over to the measure-operator convolution setting.
    \begin{proposition}
    Let $\mu \in M(\R^{2d})$ and $S \in \mathcal{S}^1$, then
    $$
    \mathcal{F}_W(\mu \star S) = \mathcal{F}_\sigma(\mu)\mathcal{F}_W(S).
    $$
    \end{proposition}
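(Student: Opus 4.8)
The plan is to fix $z = (x,\omega) \in \R^{2d}$ and compute $\mathcal{F}_W(\mu \star S)(z)$ directly from the definition \eqref{eq:fourier_wigner_def}, reducing matters to the point-mass case by an application of Fubini. Since $S \in \mathcal{S}^1$ and $\mu \in M(\R^{2d})$, Proposition \ref{prop:interpolated_mapping_meas_op} (with $p=1$) gives $\mu \star S \in \mathcal{S}^1$, so $\pi(-z)(\mu \star S)$ is again trace class and $\mathcal{F}_W(\mu \star S)(z) = e^{-\pi i x\omega}\tr\big(\pi(-z)(\mu \star S)\big)$ is well defined.

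First I would expand the trace over an orthonormal basis $\{e_n\}$ of $L^2(\R^d)$ and insert the weak formula of Theorem \ref{thm:weak_def_of_measure_operator_convolution}:
\[
\tr\big(\pi(-z)(\mu \star S)\big) = \sum_n \big\langle (\mu \star S) e_n, \pi(z) e_n \big\rangle = \sum_n \int_{\R^{2d}} \big\langle \pi(z') S \pi(z')^* e_n, \pi(z) e_n \big\rangle \, d\mu(z').
\]
The key step is interchanging $\sum_n$ and $\int d\mu$. For each fixed $z'$ the operator $B_{z'} := \pi(-z)\pi(z')S\pi(z')^*$ is trace class with $\Vert B_{z'}\Vert_{\mathcal{S}^1} = \Vert S\Vert_{\mathcal{S}^1}$ (left multiplication by the unitary $\pi(-z)$ and conjugation by the unitary $\pi(z')$ preserve the trace norm), and the elementary estimate $\sum_n |\langle B_{z'} e_n, e_n\rangle| \le \Vert B_{z'}\Vert_{\mathcal{S}^1}$ produces the dominating function $z' \mapsto \Vert S\Vert_{\mathcal{S}^1}$, which is $|\mu|$-integrable since $\mu$ is finite. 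Hence Fubini--Tonelli applies and
\[
\tr\big(\pi(-z)(\mu \star S)\big) = \int_{\R^{2d}} \tr\big(\pi(-z)\pi(z')S\pi(z')^*\big)\, d\mu(z').
\]

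It then remains to identify the integrand with a multiple of $\mathcal{F}_W(S)(z)$. Using cyclicity of the trace and the commutation relations for time-frequency shifts (with the paper's sign conventions), $e^{-\pi i x\omega}\tr\big(\pi(-z)\pi(z')S\pi(z')^*\big) = \mathcal{F}_W(\alpha_{z'}(S))(z) = e^{-2\pi i\sigma(z,z')}\mathcal{F}_W(S)(z)$; this is precisely the point-mass case of the statement and is consistent with Proposition \ref{prop:function_convolution_theorem}. Multiplying the previous display by $e^{-\pi i x\omega}$ and pulling the $z'$-independent factor $\mathcal{F}_W(S)(z)$ out of the integral yields
\[
\mathcal{F}_W(\mu \star S)(z) = \mathcal{F}_W(S)(z) \int_{\R^{2d}} e^{-2\pi i\sigma(z,z')}\, d\mu(z') = \mathcal{F}_\sigma(\mu)(z)\,\mathcal{F}_W(S)(z),
\]
where $\mathcal{F}_\sigma(\mu)(z) := \int_{\R^{2d}} e^{-2\pi i \sigma(z,z')}\,d\mu(z')$ is the natural (bounded, continuous) extension of the symplectic Fourier transform to $M(\R^{2d})$. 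Since $z$ was arbitrary, this is the claimed identity.

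I expect the interchange of summation and integration in the second paragraph to be the only genuine obstacle: the rest is the definition of $\mathcal{F}_W$, the already-established fact that $\mu \star S$ is trace class, and the standard covariance of the Fourier--Wigner transform. An alternative would be to run the argument through the BUPU discretization $D_\Psi\mu \to \mu$ used to define $\star$, together with boundedness of $\mathcal{F}_W : \mathcal{S}^1 \to C_0(\R^{2d})$ and linearity on finite sums of point masses; but then one must justify $\mathcal{F}_\sigma(D_\Psi\mu) \to \mathcal{F}_\sigma(\mu)$ pointwise, which needs the $w^*$-convergence for tight families (in the spirit of Theorem \ref{theorem:weak_continuity_for_meas_op_abstract_version}) because $z' \mapsto e^{-2\pi i\sigma(z,z')}$ is not in $C_0$ and Lemma \ref{lemma:measure_function_convolution_limit} does not directly apply. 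The direct trace computation above sidesteps this.
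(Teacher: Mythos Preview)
Your proof is correct and follows essentially the same route as the paper's: expand the trace over an orthonormal basis, invoke the weak formulation of Theorem~\ref{thm:weak_def_of_measure_operator_convolution}, interchange sum and integral, and apply the covariance relation $\tr(\pi(-z)\pi(z')S\pi(z')^*) = e^{-2\pi i\sigma(z,z')}\tr(\pi(-z)S)$. Your Fubini justification via the uniform bound $\sum_n |\langle B_{z'} e_n, e_n\rangle| \le \Vert S\Vert_{\mathcal{S}^1}$ is in fact cleaner than the paper's, which first assumes $\mathcal{F}_W(S)$ is integrable to justify the interchange and then closes the general case by a density argument.
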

    \begin{proof}
        We follow the same arguments as Werner's original proof in \cite{Werner1984}, assuming that $\mathcal{F}_W(S)$ is integrable, by first computing
        \begin{align*}
            \mathcal{F}_W(\mu \star S)(z) &= e^{-\pi i x \cdot \omega} \tr\left(\pi(-z) (\mu \star S) \right)\\
            &= e^{-\pi i x \cdot \omega} \sum_n \langle \pi(-z) (\mu \star S)(e_n), e_n \rangle\\
            &= e^{-\pi i x \cdot \omega} \sum_n \int_{\R^{2d}} \langle \pi(-z)\pi(z')S\pi(z')^*e_n, e_n \rangle\, d\mu(z')\\
            &= e^{-\pi i x \cdot \omega} \int_{\R^{2d}} \tr\big( \pi(-z)\pi(z')S\pi(z')^* \big)\, d\mu(z'),
        \end{align*}
        where we used Theorem \ref{thm:weak_def_of_measure_operator_convolution} to move to an integral formulation and justified the change of order of summation and integration by Fubini using the integrability of $\mathcal{F}_W(S)$. From here we use the relation $\tr\big(\pi(-z)\pi(z') S \pi(z')^*\big) = e^{-2\pi i \sigma(z, z')} \tr(\pi(-z) S)$ which can be verified directly as is done in \cite{Luef2018} to deduce
        \begin{align*}
            \mathcal{F}_W(\mu \star S)(z) &= e^{-\pi i x \cdot \omega} \int_{\R^{2d}} e^{-2\pi i \sigma(z, z')} \tr\big( \pi(-z)S \big) \,d\mu(z')\\
            &= \mathcal{F}_\sigma(\mu)(z)\mathcal{F}_W(S)(z).
        \end{align*}
        The case where $\mathcal{F}_W(S)$ is not integrable can now be deduced by density.
    \end{proof}
    By specializing Theorem \ref{theorem:weak_continuity_for_meas_op_abstract_version} to measure-operator convolutions, we obtain the following corollary.
    \begin{corollary}\label{corollary:measure_weak_implies_s1}
        Let $(\mu_\alpha)_{\alpha \in I}$ be a bounded and tight net in $M(\R^{2d})$ with $\mu_0 = w^*-\lim_{\alpha \to \infty} \mu_\alpha$, then
        \begin{align*}
            \lim_{\alpha\to\infty} \Vert \mu_\alpha \star S - \mu_0 \star S\Vert_{\mathcal{S}^1} = 0\qquad \text{for all }S \in \mathcal{S}^1.
        \end{align*}
    \end{corollary}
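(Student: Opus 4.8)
The plan is to obtain this corollary as a direct specialization of the abstract continuity statement, Theorem \ref{theorem:weak_continuity_for_meas_op_abstract_version}, with $G = \R^{2d}$, $(B, \Vert \cdot \Vert_B) = (\mathcal{S}^1, \Vert \cdot \Vert_{\mathcal{S}^1})$, and $\rho(z)S = \pi(z)S\pi(z)^*$. So the bulk of the argument is simply checking that the hypotheses of that theorem are in force.

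First I would recall that the lemma preceding Corollary \ref{corollary:uniqueness} already established that this $\rho$ is a strongly continuous, isometric representation of the locally compact abelian group $\R^{2d}$ on $\mathcal{S}^1$: strong continuity is \cite[Lemma 2.1]{Bekka1990}, and the isometry property is just $\Vert \pi(z) S \pi(z)^* \Vert_{\mathcal{S}^1} = \Vert S \Vert_{\mathcal{S}^1}$, since conjugation by a unitary leaves the singular values unchanged. Thus all structural hypotheses of Theorem \ref{theorem:weak_continuity_for_meas_op_abstract_version} are met, and the module action $\bullet_\rho$ produced by Theorem \ref{thm:existence_convolution} is precisely the convolution $\star$ of Corollary \ref{corollary:uniqueness}.

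Second, I would point out that the $w^*$-topology on $M(\R^{2d})$ appearing in the hypothesis of Theorem \ref{theorem:weak_continuity_for_meas_op_abstract_version} is the one it carries as the dual of $C_0(\R^{2d})$, so that $\mu_0 = w^*\text{-}\lim_{\alpha} \mu_\alpha$ means $\int_{\R^{2d}} f \, d\mu_\alpha \to \int_{\R^{2d}} f \, d\mu_0$ for every $f \in C_0(\R^{2d})$ — exactly the convergence the abstract theorem uses. Boundedness of the net guarantees that $\mu_0$ is again a bounded measure, and together with the assumed tightness these are the two remaining hypotheses. Feeding this into Theorem \ref{theorem:weak_continuity_for_meas_op_abstract_version} yields $\Vert \mu_\alpha \star S - \mu_0 \star S \Vert_{\mathcal{S}^1} \to 0$ for every $S \in \mathcal{S}^1$, which is the claim.

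There is essentially no obstacle here: all the analytic content — the existence of the $M(\R^{2d})$-module action, its $w^*$-continuity on bounded and tight nets, and the verification that conjugation by time-frequency shifts is an admissible representation — has already been carried out in Section \ref{sec:prelim_convolutions} and at the start of Section \ref{sec:def_prop_measure_operator_conv}, so the corollary is a pure specialization. If one preferred a more hands-on route, one could instead combine the uniform bound $\Vert \mu_\alpha \star S \Vert_{\mathcal{S}^1} \le \Vert \mu_\alpha \Vert_{M} \Vert S \Vert_{\mathcal{S}^1}$ from Proposition \ref{prop:interpolated_mapping_meas_op} with an $\varepsilon$-approximation of $S$ by finite-rank operators and of each $\mu_\alpha$ by a common tight truncation $k \cdot \mu_\alpha$, reducing matters to the weak formula of Theorem \ref{thm:weak_def_of_measure_operator_convolution}; but invoking Theorem \ref{theorem:weak_continuity_for_meas_op_abstract_version} directly is cleaner and is the approach I would take.
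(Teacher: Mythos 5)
Your proposal is correct and matches the paper exactly: the paper obtains this corollary by directly specializing Theorem \ref{theorem:weak_continuity_for_meas_op_abstract_version} to the representation $\rho(z)S = \pi(z)S\pi(z)^*$ on $\mathcal{S}^1$, whose admissibility was already verified at the start of Section \ref{sec:def_prop_measure_operator_conv}. Your additional checks of the hypotheses are fine, and the alternative hands-on route you sketch is not needed.
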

    This is a non-trivial extension to the $M(\R^{2d})$ continuity of measure-operator convolutions of Corollary \ref{corollary:uniqueness} which we will need in Section \ref{sec:gabor_seq_conv}.
    
    \section{Applications to lattice convolutions}\label{sec:applications_to_lattices}
    Much of applied harmonic analysis and time-frequency analysis is carried out on lattices. There is therefore inherent value in translating results to this setting. This was the aim of the recent paper \cite{Skrettingland2020} by Skrettingland, which introduced quantum harmonic analysis on lattices. Using our results on measure-operator convolutions, we can deduce several results from \cite{Skrettingland2020} immediately from Corollary \ref{corollary:uniqueness}.
    
    Before diving into the full technical aspects of measure-operator convolutions, we clarify the links to \cite{Skrettingland2020} and \cite{Dorfler2021} which explicitly and implicitly use lattice convolutions. Later we discuss a natural generalization of Gabor frames which sets the stage for a mixed-state analogue of Gabor multipliers that mirror the relation between localization operators and mixed-state localization operators. From this generalized notion of Gabor frames we are also able to deduce a version of the Berezin-Lieb inequality on lattices. Lastly we recall a result from \cite{Feichtinger2022} which will allow us to prove the promised result on approximating localization operators by Gabor multipliers and the continuity of Gabor multipliers in Section \ref{sec:gabor_seq_conv}.

    \subsection{Generalities}
    In \cite{Skrettingland2020}, {\it lattice convolutions} are defined between sequences $c \in \ell^1(\Lambda)$, where $\Lambda$ is a lattice, and operators $S \in \mathcal{S}^p$ as
    \begin{equation} \label{LamTwDef}
    c \star_\Lambda S = \sum_{\lambda \in \Lambda} c(\lambda) \pi(\lambda) S \pi(\lambda)^* = \underbrace{\left( \sum_{\lambda \in \Lambda} c(\lambda) \delta_\lambda \right)}_{=: \mu_c} \star S.
    \end{equation}
    Following standard terminology for the case of ordinary convolution, one could also call this a {\it semi-discrete twisted convolution}, or interpret it as the action of the discrete (here bounded) measure on the operators $S$ via the (extended) version of the convolution. As sequences in $\ell^1(\Lambda)$ can be isometrically embedded in $M(\R^{2d})$ as discrete measures, we immediately get the bound
    \begin{equation} \label{LamTwEst}
        \Vert c \star_\Lambda S \Vert_{\mathcal{S}^p} \leq \Vert c \Vert_{\ell^1(\Lambda)} \Vert S \Vert_{\mathcal{S}^p}
    \end{equation}
    from Proposition \ref{prop:interpolated_mapping_meas_op}. Other properties, such as the commutativity of measure-operator convolutions, allow us to recover results from \cite{Skrettingland2020} without much work.
    
    Gabor multipliers and more generally \emph{mixed-state} Gabor multipliers (following the terminology of \cite{Luef2019}) are defined in \cite{Skrettingland2020} as $\mu_c \star S$ and they appropriately generalize the classical notion of Gabor multipliers on lattices discussed in Section \ref{sec:prelim_frame_theory} in the case where $S$ is rank-one. See also the early work \cite{fe02} on this subject in the context of LCA groups. 
    
    In \cite{Dorfler2021}, quantum harmonic analysis is used to develop tools for investigating systems of functions. A central tool is the \emph{data operator} $S_\mathcal{D}$ associated to the
    (indexed) \emph{data set} $\D = \{ f_n \}_n $ in $L^2(\R^d)$ given by
    $$
    S_\mathcal{D} = \sum_n f_n \otimes f_n.
    $$
    Motivated by the desire to increase the size of the data set for machine learning purposes, the $\Omega$\emph{-augmentation} of $\D$ with respect to a compact set $\Omega \subset \R^d$ is defined as 
    \begin{align}\label{eq:aug_data_set}
        \D_\Omega = \left\{ \frac{\pi(z)}{|\Omega|^{1/2}}f_n : \, f_n \in \D,\, z \in \Omega \right\},    
    \end{align}
    which leads to the augmented data operator
    \begin{align}\label{eq:aug_data_op}
        S_{\mathcal{D}_\Omega} = \frac{1}{|\Omega|}\int_{\Omega} \sum_n \pi(z)f_n \otimes \pi(z)f_n\,dz = \frac{1}{|\Omega|}\chi_\Omega \star S_\D.    
    \end{align}
    Real-world data augmentation in machine learning is of course based on discrete time-frequency shifts in \eqref{eq:aug_data_set} meaning that the set $\Omega$ should be discrete. However, the formulation \eqref{eq:aug_data_op} is inherently continuous since we are integrating over $\Omega$ and can only be realized as a classical function-operator convolution. To solve this problem, we can use measure-operator convolutions to define a version of \eqref{eq:aug_data_op} which works for discrete $\Omega$ as
    $$
    S_{\D_\Omega} = \left(\frac{1}{|\Omega|}\sum_{\omega \in \Omega} \delta_{\omega}\right) \star S_\D.
    $$
    Not all results from \cite{Dorfler2021} carry over directly to this discrete setting but we will treat one result on the eigenvalues of mixed-state Gabor multipliers in particular that \cite{Dorfler2021} relies on, originally proved in \cite{Feichtinger2003} for Gabor multipliers. To do so, we merge the setup in \cite{Dorfler2021} with the one in \cite{Skrettingland2020} and consider the set $\Omega$ restricted to a lattice $\Lambda$.
    
    \subsection{Mixed-state Gabor frames}
    In the same way that the frame condition \eqref{eq:frame_condition_classical} is what is needed to discretize the reconstruction formula \eqref{eq:reconstruction_classic}, we will see that the reconstruction formula \eqref{eq:reconstruction_cohen} only holds in the lattice setting when $(S, \Lambda)$ satisfies a version of the following condition, originally discussed in \cite{Skrettingland2021}.
    \begin{definition}
        A pair $(S, \Lambda)$ where $S \in \mathcal{S}^1$ is positive and $\Lambda$ is a lattice in $\R^{2d}$ is said to be a \emph{mixed-state Gabor frame with frame constants} $A, B$ if 
        \begin{align}\label{eq:weak_tightness}
            A\Vert \psi \Vert^2 \leq \sum_{\lambda \in \Lambda} Q_S(\psi)(\lambda) \leq B\Vert \psi \Vert^2 \qquad \text{ for all }\psi \in L^2(\R^d)
        \end{align}
        for some $A, B > 0$. Moreover, if $A = B$, the frame is said to be \emph{tight} and the common quantity $A= B$ is referred to as the \emph{frame constant}.
    \end{definition}
    Technically, if $S = \sum_n s_n (\varphi_n \otimes \varphi_n)$, the above definition is equivalent to the collection $\big\{ \sqrt{s_n}\pi(\lambda) \varphi_n \big\}_{\lambda, n}$ being an (infinite) \emph{multi-window} Gabor frame, introduced in \cite{Zibulski1997}. This connection is made clear in the following example which shows how tight mixed-state Gabor frames can be constructed as linear combinations of tight Gabor frames.
    \begin{example}
        Let $\Lambda$ be a lattice in $\R^{2d}$ and $(\varphi_n)_n$  in $L^2(\R^d)$ a collection of windows such that $(\varphi_n, \Lambda)$ is a tight frame with frame constant $A$ for each $n$. Then if $(s_n)_n \in \ell^1$ is a convex combination so that
        $$
        S = \sum_n s_n (\varphi_n \otimes \varphi_n)
        $$
        is a positive trace-class operator, the pair $(S, \Lambda)$ is a tight mixed-state Gabor frame with frame constant $A$. Indeed, we verify
        \begin{align*}
            \sum_{\lambda \in \Lambda} Q_S(\psi)(\lambda) &= \sum_{\lambda \in \Lambda} (\psi \otimes \psi) \star \left( \sum_n s_n (\varphi_n \otimes \varphi_n)\,\check{} \right)\\
            &= \sum_n s_n \sum_{\lambda \in \Lambda} |V_{\varphi_n}\psi(\lambda)|^2 = \sum_n s_n A\Vert \psi \Vert^2 = A \Vert \psi \Vert^2,
        \end{align*}
        using the fact that $(\psi \otimes \psi) \star (\varphi \otimes \varphi)\,\check{} = |V_\varphi \psi|^2$, the frame equality and Fubini for the change of order of summation.
    \end{example}

    Our concept of mixed-state Gabor frames can also be seen as a special case of Gabor g-frames, introduced in \cite[Section 5]{Skrettingland2021}. More specifically, $(S, \Lambda)$ is a mixed-state Gabor frame if and only if $\sqrt{S}$ generates a Gabor g-frame with respect to $\Lambda$. As we will not need the full generality of Gabor g-frames in the following, we stick to the notation above when proving the promised reconstruction formula.
    \begin{proposition}\label{prop:approximation_of_identity_lattice}
        Let $(S, \Lambda)$ be a tight mixed-state Gabor frame with frame constant $A$. We then have the reconstruction of the identity 
        \begin{align}\label{eq:reconstruction_formula}
            \sum_{\lambda \in \Lambda} \pi(\lambda) S \pi(\lambda)^*\psi = A\psi\qquad \textrm{ for all }\psi \in L^2(\R^d).
        \end{align}
    \end{proposition}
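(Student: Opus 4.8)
The plan is to show that the operator $\sum_{\lambda \in \Lambda} \pi(\lambda) S \pi(\lambda)^*$ equals $A \cdot I_{L^2}$ by testing it against arbitrary $\psi, \phi \in L^2(\R^d)$ and reducing everything to the mixed-state frame condition \eqref{eq:weak_tightness} for $Q_S$. First I would observe that the series on the left is exactly the lattice convolution $\mu_\Lambda \star S$ where $\mu_\Lambda = \sum_{\lambda \in \Lambda} \delta_\lambda$ is the Dirac comb of $\Lambda$; this is not a bounded measure, so it does not fall under Corollary \ref{corollary:uniqueness} directly, but the partial sums $\sum_{|\lambda| \leq R} \pi(\lambda) S \pi(\lambda)^*$ are positive operators (each term $\pi(\lambda) S \pi(\lambda)^*$ is positive since $S$ is), so convergence in the strong (or weak) operator topology to a bounded operator is what we need to establish, and the frame upper bound $B$ will provide the uniform bound.

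The heart of the argument is the identity, valid for any $\psi \in L^2(\R^d)$,
\begin{align*}
    \Big\langle \Big( \sum_{\lambda \in \Lambda} \pi(\lambda) S \pi(\lambda)^* \Big) \psi, \psi \Big\rangle = \sum_{\lambda \in \Lambda} \big\langle S \pi(\lambda)^* \psi, \pi(\lambda)^* \psi \big\rangle = \sum_{\lambda \in \Lambda} Q_S(\psi)(\lambda),
\end{align*}
where the last equality uses the definition $Q_S(\psi)(z) = (\psi \otimes \psi) \star \check S(z) = \langle \check S \pi(z)^* \psi, \pi(z)^* \psi\rangle$ together with the relation $\langle \check S \pi(\lambda)^* \psi, \pi(\lambda)^* \psi\rangle = \langle S \pi(\lambda) \psi, \pi(\lambda)\psi\rangle$ and the invariance of summing over a lattice under $\lambda \mapsto -\lambda$; alternatively one expands $S$ in its singular value decomposition $S = \sum_n s_n (\varphi_n \otimes \varphi_n)$ and recognizes $\sum_\lambda Q_S(\psi)(\lambda) = \sum_n s_n \sum_\lambda |V_{\varphi_n}\psi(\lambda)|^2$, as in the Example above. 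By the tightness hypothesis, this quadratic form equals $A \Vert \psi \Vert^2 = \langle A\psi, \psi\rangle$ for every $\psi$. Since $\sum_{\lambda \in \Lambda} \pi(\lambda) S \pi(\lambda)^*$ is a positive operator (being a strong limit of positive partial sums, with the limit existing and bounded by $B\cdot I$ thanks to the upper frame bound) and $A \cdot I_{L^2}$ is self-adjoint, the coincidence of their quadratic forms on all of $L^2(\R^d)$ forces the operators themselves to be equal by polarization.

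The main obstacle I expect is the bookkeeping around convergence of the series $\sum_{\lambda \in \Lambda} \pi(\lambda) S \pi(\lambda)^* \psi$: one must argue that it converges in $L^2(\R^d)$ (not merely weakly) so that \eqref{eq:reconstruction_formula} holds as an honest identity of vectors. This follows because the operator $G := \sum_\lambda \pi(\lambda) S \pi(\lambda)^*$ is well-defined and bounded (with $0 \leq G \leq B\cdot I$) — its weak-operator-topology definition coincides with strong convergence of the partial sums since, for a positive operator realized as an increasing limit of positive operators, weak convergence upgrades to strong convergence on a dense set and then everywhere by the uniform bound. Once $G$ is identified as a bounded positive operator with $\langle G\psi,\psi\rangle = A\Vert\psi\Vert^2$ for all $\psi$, the conclusion $G = A\cdot I_{L^2}$ is immediate. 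A secondary point to handle carefully is justifying the interchange of summation and the inner product in the displayed identity, which is legitimate because each summand $Q_S(\psi)(\lambda) \geq 0$ (as $S \geq 0$), so the rearrangement is a statement about a series of non-negative terms.
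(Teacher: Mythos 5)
Your proof is correct and rests on the same idea as the paper's: the tight-frame condition says that the quadratic form of the frame operator $G=\sum_{\lambda\in\Lambda}\pi(\lambda)S\pi(\lambda)^*$ equals $A\Vert\psi\Vert^2$, and polarization (in the paper's version, the fact that the analysis isometry $\Theta\psi=\frac{1}{\sqrt{A}}\big(\sqrt{s_n}\langle\psi,\pi(\lambda)\varphi_n\rangle\big)_{\lambda,n}$ preserves inner products) upgrades this to $G=A\cdot I_{L^2}$. The one genuine addition on your side is the careful treatment of convergence of the series via monotone convergence of positive partial sums bounded by $B\cdot I$ — a point the paper's proof passes over in silence — and your identification $\langle\pi(\lambda)S\pi(\lambda)^*\psi,\psi\rangle=Q_S(\psi)(\lambda)$ is the correct reading of the frame condition, consistent with the paper's explicit formula $Q_S(\psi)(\lambda)=\sum_n s_n|V_{\varphi_n}\psi(\lambda)|^2$.
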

    \begin{proof}
        We expand $S$ in its singular value decomposition as $S = \sum_n s_n (\varphi \otimes \varphi)$ with $s_n \geq 0$ for all $n$ by the positivity of $S$. The tightness of the mixed state Gabor frame can then be written more explicitly as
        \begin{align}\label{eq:explicit_tightness_cohen}
            A\Vert \psi \Vert^2 = \sum_{\lambda \in \Lambda} \sum_n s_n |\langle \psi, \pi(\lambda)\varphi_n \rangle|^2,
        \end{align}
        while \eqref{eq:reconstruction_formula} can be written as
        \begin{align}\label{eq:reconstruction_explicit}
            A\psi = \sum_{\lambda \in \Lambda} \sum_n s_n \langle \psi, \pi(\lambda) \varphi_n \rangle \pi(\lambda) \varphi_n.
        \end{align}
        Consider the linear mapping $\Theta : L^2(\R^d) \to \ell^2(\Lambda \times \N)$ given by
        $$
        \Theta \psi = \frac{1}{\sqrt{A}}\big(\sqrt{s_n} \langle \psi, \pi(\lambda)\varphi_n \rangle \big)_{\lambda,\, n}.
        $$
        By \eqref{eq:explicit_tightness_cohen}, $\Theta$ is an isometry and hence preserves inner products. We expand $\psi$ in an orthonormal basis $(e_m)_m$ and use this as
        \begin{align*}
            A\psi &= A\sum_m \langle \psi, e_m \rangle e_m\\
            &= A\sum_m \langle \Theta\psi, \Theta e_m \rangle e_m\\
            &= \sum_m \sum_{\lambda \in \Lambda} \sum_n s_n \langle \psi, \pi(\lambda)\varphi_n \rangle \langle \pi(\lambda) \varphi_n, e_m \rangle e_m\\
            &= \sum_{\lambda \in \Lambda} \sum_n s_n \langle \psi, \pi(\lambda) \rangle \pi(\lambda) \varphi_n,
        \end{align*}
        which is precisely \eqref{eq:reconstruction_explicit}.
    \end{proof}
    \begin{remark}
        In the language of \cite{Skrettingland2021}, the above proposition states that the pair $(\sqrt{S}, \sqrt{S})$ generate dual Gabor g-frames.
    \end{remark}
    We can also deduce a lattice analogue of Proposition \ref{prop:op_conv_properties} \ref{item:op_op_conv_explicit} using the tight frame condition.
    \begin{proposition}\label{prop:sum_over_op_conv}
        Let $T,S \in \mathcal{S}^1$ and $\Lambda$ a lattice such that $\big(S, \Lambda\big)$ is a tight mixed-state Gabor frame with frame constant $1$. Then
        $$
        \sum_{\lambda \in \Lambda} T \star \check{S}(\lambda) = \tr(T).
        $$
    \end{proposition}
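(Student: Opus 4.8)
The plan is to compute $\sum_{\lambda \in \Lambda} T \star \check{S}(\lambda)$ by expanding the operator-operator convolution using its definition $T \star \check{S}(\lambda) = \tr(T \alpha_\lambda(S)) = \tr(T \pi(\lambda) S \pi(\lambda)^*)$ and then exchanging the trace with the sum over $\Lambda$. Writing the trace in an orthonormal basis, the sum becomes $\sum_{\lambda} \sum_m \langle T \pi(\lambda) S \pi(\lambda)^* e_m, e_m \rangle$, and the goal is to recognize the inner sum over $\lambda$ as the reconstruction operator from Proposition \ref{prop:approximation_of_identity_lattice}. Since $(S, \Lambda)$ is a tight mixed-state Gabor frame with frame constant $1$, that proposition gives $\sum_{\lambda \in \Lambda} \pi(\lambda) S \pi(\lambda)^* \psi = \psi$ for all $\psi$, i.e. the reconstruction operator is the identity. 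Substituting $\psi = e_m$ (or more carefully, moving $T$ out first) collapses the double sum to $\sum_m \langle T e_m, e_m \rangle = \tr(T)$.

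Concretely, first I would write $T \star \check{S}(\lambda) = \tr\big(\pi(\lambda)^* T \pi(\lambda) S\big)$ using cyclicity of the trace, or keep it as $\tr(T \pi(\lambda) S \pi(\lambda)^*)$ — either form works. Then I would justify interchanging $\sum_{\lambda}$ and $\tr$: by Proposition \ref{prop:op_conv_properties}\ref{item:op_op_conv_interpol} the values $T \star \check{S}(\lambda)$ are the samples of an $L^1(\R^{2d})$ function (indeed $\|T \star \check S\|_{L^1} \le \|T\|_{\mathcal{S}^1}\|S\|_{\mathcal{S}^1}$), but absolute summability of the \emph{samples} needs the frame hypothesis: from the tight frame condition with constant $1$ applied to each basis vector $e_m$, one gets $\sum_{\lambda} Q_S(e_m)(\lambda) = \|e_m\|^2 = 1$, and more generally the positive quantities $\sum_\lambda \langle S\pi(\lambda)^* e_m, \pi(\lambda)^* e_m\rangle$ are finite, which controls the diagonal terms; combined with Cauchy--Schwarz in the trace pairing this gives absolute convergence of $\sum_{\lambda}\tr(T\pi(\lambda)S\pi(\lambda)^*)$, legitimizing Fubini/Tonelli. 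With the interchange in hand, $\sum_\lambda T \star \check S(\lambda) = \tr\big(T \sum_\lambda \pi(\lambda) S \pi(\lambda)^*\big) = \tr(T \cdot I) = \tr(T)$, where the middle step uses Proposition \ref{prop:approximation_of_identity_lattice} (the series $\sum_\lambda \pi(\lambda)S\pi(\lambda)^*$ converges strongly to the identity, and $T$ is trace-class, so it may be pulled through).

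The main obstacle I expect is making the interchange of summation and trace fully rigorous: the operator series $\sum_{\lambda} \pi(\lambda) S \pi(\lambda)^*$ converges to $I$ only in the strong operator topology (that is all Proposition \ref{prop:approximation_of_identity_lattice} gives), not in $\mathcal{S}^1$, so one cannot naively write $\tr(T \sum_\lambda (\cdots)) = \sum_\lambda \tr(T(\cdots))$ by continuity of the trace on $\mathcal{S}^1$. The clean fix is to work entirely at the level of the scalar double series $\sum_{\lambda, m} \langle T \pi(\lambda) S \pi(\lambda)^* e_m, e_m\rangle$: establish absolute convergence (using the frame bound to dominate the $\lambda$-sum for each fixed $m$, and $T \in \mathcal{S}^1$ to handle the $m$-sum), then apply Fubini to sum over $\lambda$ first, obtaining $\sum_m \langle T (\sum_\lambda \pi(\lambda)S\pi(\lambda)^* e_m), e_m\rangle = \sum_m \langle T e_m, e_m \rangle = \tr(T)$ by the strong-convergence reconstruction formula. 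Alternatively, one can truncate $\Lambda$ to finite subsets and pass to the limit using the $\ell^1$ bound \eqref{LamTwEst} together with Proposition \ref{prop:interpolated_mapping_meas_op}; I would present whichever is shorter, likely the direct scalar double-series argument.
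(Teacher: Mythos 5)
Your argument is correct in substance, but it is organized differently from the paper's proof. The paper expands $T$ in its singular value decomposition $T=\sum_n t_n(\xi_n\otimes\eta_n)$, reduces the claim to $\sum_{\lambda\in\Lambda}Q_S(\xi_n,\eta_n)(\lambda)=\langle\xi_n,\eta_n\rangle$, and obtains that identity by polarizing the tight frame condition \eqref{eq:weak_tightness}; summing $\sum_n t_n\langle\xi_n,\eta_n\rangle=\tr(T)$ finishes the proof. You instead write each sample as $\tr(T\pi(\lambda)S\pi(\lambda)^*)$, interchange the trace with the sum over $\Lambda$, and invoke the reconstruction formula of Proposition \ref{prop:approximation_of_identity_lattice}, i.e.\ the strong convergence $\sum_\lambda\pi(\lambda)S\pi(\lambda)^*=I$. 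These are two faces of the same fact --- the polarized identity $\sum_\lambda Q_S(\psi,\phi)(\lambda)=\langle\psi,\phi\rangle$ is exactly the weak form of that reconstruction formula --- so your route buys a more conceptual statement (``the frame operator is the identity, so the trace collapses'') at the price of having to justify an interchange of a trace with a series that converges only strongly, whereas the paper's polarization argument keeps everything scalar from the start and only needs to swap $\sum_n$ with $\sum_\lambda$.

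The one step you should firm up is the absolute convergence of the double series: the diagonal bound $\sum_\lambda Q_S(e_m)(\lambda)=1$ does not by itself control $|\langle T\pi(\lambda)S\pi(\lambda)^*e_m,e_m\rangle|$, because $T$ mixes basis vectors. The clean fix is the Cauchy--Schwarz inequality for the positive operator $A_\lambda=\pi(\lambda)S\pi(\lambda)^*$, namely $|\langle A_\lambda f,g\rangle|\le\langle A_\lambda f,f\rangle^{1/2}\langle A_\lambda g,g\rangle^{1/2}$, which gives
\begin{align*}
\sum_{\lambda}\big|\langle A_\lambda e_m,T^*e_m\rangle\big|\le\Big(\sum_\lambda Q_S(e_m)(\lambda)\Big)^{1/2}\Big(\sum_\lambda Q_S(T^*e_m)(\lambda)\Big)^{1/2}=\Vert T^*e_m\Vert ,
\end{align*}
and then you must choose the orthonormal basis adapted to $T$ (take $e_m=\xi_m$ from the singular value decomposition, so $\Vert T^*\xi_m\Vert=t_m$ and $\sum_m t_m=\Vert T\Vert_{\mathcal{S}^1}<\infty$); for a generic basis $\sum_m\Vert T^*e_m\Vert$ need not be finite. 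Once you make that choice explicit, Fubini applies and your computation goes through. Note that at that point you have essentially reproduced the paper's proof, since splitting off the singular vectors of $T$ and applying Cauchy--Schwarz to $Q_S$ is exactly what the polarization argument accomplishes.
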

    \begin{proof}
        Expand $T$ in its singular value decomposition with orthonormal bases $(\xi_n)_n$ and $(\eta_n)_n$ of $L^2(\R^d)$ as $T = \sum_n t_n (\xi_n \otimes \eta_n)$ and note that
        \begin{align*}
            \sum_{\lambda \in \Lambda} T \star \check{S}(\lambda) &= \sum_n t_n \sum_{\lambda \in \Lambda} (\xi_n \otimes \eta_n) \star \check{S}(\lambda)= \sum_n t_n \sum_{\lambda \in \Lambda} Q_S(\xi_n, \eta_n)(\lambda).
        \end{align*}
        To proceed, we need an analogue of the frame identity \eqref{eq:weak_tightness} for $Q_S(\xi_n, \eta_n)$ which can be shown using a polarization argument. A straightforward calculation shows that
        $$
        Q_S(\psi, \phi)(\lambda) = \frac{1}{4}\Big( Q_S(\psi + \phi)(\lambda) - Q_S(\psi - \phi)(\lambda) + i Q_S(\psi + i\phi)(\lambda) - iQ_S(\psi - i\phi)(\lambda)\Big).
        $$
        Hence by summing over all $\lambda \in \Lambda$ on both sides, we obtain
        \begin{align*}
            \sum_{\lambda \in \Lambda} Q_{S}(\psi, \phi)(\lambda) = \frac{1}{4} \Big( \Vert \psi + \phi \Vert^2 - \Vert \psi - \phi \Vert^2 + i \Vert \psi + i\phi \Vert^2 - i\Vert \psi - i\phi \Vert^2\Big) = \langle \psi, \phi\rangle.
        \end{align*}
        We can now finish the computation as
        \begin{align*}
            \sum_{\lambda \in \Lambda} T \star \check{S} (\lambda) = \sum_n t_n \sum_{\lambda \in \Lambda} Q_S(\xi_n, \eta_n)(\lambda) = \sum_n t_n  \langle \xi_n, \eta_n \rangle = \tr(T).
        \end{align*}
    \end{proof}
    Next, we show how the frame properties of $S$ are changed when we convolve it with a well-behaved measure. This generalizes the continuous property discussed in \cite[Section 7.2]{Luef2019_acc}.
    
    \begin{proposition}
        Let $(S, \Lambda)$ be a mixed-state Gabor frame with frame bounds $A, B$ and $\mu_c \in M(\R^{2d})$ a discrete measure of the form $\mu_c = \sum_{\lambda \in \Lambda} c(\lambda) \delta_\lambda$ where $c \in \ell^1(\Lambda)$. Then $(\check{\mu_c} \star S, \Lambda)$ is also a mixed-state Gabor frame with frame bounds $A \sum_{\lambda \in \Lambda} c(\lambda), B\sum_{\lambda \in \Lambda} c(\lambda)$.
    \end{proposition}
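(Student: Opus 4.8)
The plan is to reduce the frame inequality for $(\check{\mu_c}\star S,\Lambda)$ to the one already assumed for $(S,\Lambda)$ by identifying the Cohen's class distribution of the convolved operator as a lattice convolution of $Q_S$. First I would record that $\check{\mu_c}\star S$ is a legitimate window operator: by Proposition \ref{prop:measure_op_conv_properties} \ref{item:me_op_pos} it is positive, since $\check{\mu_c}$ is a positive measure (here $c\geq 0$, which is implicit in the statement, as otherwise $\check{\mu_c}\star S$ need not be positive) and $S$ is positive, and $\Vert\check{\mu_c}\star S\Vert_{\mathcal{S}^1}\leq\Vert c\Vert_{\ell^1(\Lambda)}\Vert S\Vert_{\mathcal{S}^1}<\infty$ by Proposition \ref{prop:interpolated_mapping_meas_op}.

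The key step is the identity
\[
Q_{\check{\mu_c}\star S}(\psi) = \mu_c * Q_S(\psi) \qquad\text{for all }\psi\in L^2(\R^d).
\]
I would obtain it by starting from $Q_T(\psi)(z)=(\psi\otimes\psi)\star\check{T}(z)$ with $T=\check{\mu_c}\star S$, then using Proposition \ref{prop:measure_op_conv_properties} \ref{item:me_op_check} together with $\check{\check{\mu_c}}=\mu_c$ to rewrite $(\check{\mu_c}\star S)\check{\,}=\mu_c\star\check{S}$, so that $Q_{\check{\mu_c}\star S}(\psi)=(\psi\otimes\psi)\star(\mu_c\star\check{S})$. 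Invoking commutativity of the operator--operator convolution and then the associativity rule $(\mu\star R)\star T=\mu*(R\star T)$ of Proposition \ref{prop:measure_op_conv_properties} \ref{item:me_op_ass1} turns this into $\mu_c*\big(\check{S}\star(\psi\otimes\psi)\big)=\mu_c*Q_S(\psi)$.

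With the identity in hand I would evaluate on the lattice and sum. Since $\mu_c=\sum_{\lambda'\in\Lambda}c(\lambda')\delta_{\lambda'}$,
\[
\sum_{\lambda\in\Lambda} Q_{\check{\mu_c}\star S}(\psi)(\lambda) = \sum_{\lambda\in\Lambda}\sum_{\lambda'\in\Lambda} c(\lambda')\,Q_S(\psi)(\lambda-\lambda').
\]
Because $c\in\ell^1(\Lambda)$, $Q_S(\psi)\geq 0$, and $\sum_{\mu\in\Lambda}Q_S(\psi)(\mu)\leq B\Vert\psi\Vert^2$, every term is non-negative and the double series converges, so the two sums may be interchanged; for each fixed $\lambda'$ the map $\lambda\mapsto\lambda-\lambda'$ is a bijection of $\Lambda$, hence $\sum_{\lambda\in\Lambda}Q_S(\psi)(\lambda-\lambda')=\sum_{\mu\in\Lambda}Q_S(\psi)(\mu)\in[A\Vert\psi\Vert^2,B\Vert\psi\Vert^2]$ by \eqref{eq:weak_tightness}. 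Multiplying by $c(\lambda')$ and summing over $\lambda'$ gives
\[
A\Big(\sum_{\lambda'\in\Lambda}c(\lambda')\Big)\Vert\psi\Vert^2 \leq \sum_{\lambda\in\Lambda} Q_{\check{\mu_c}\star S}(\psi)(\lambda) \leq B\Big(\sum_{\lambda'\in\Lambda}c(\lambda')\Big)\Vert\psi\Vert^2,
\]
which is exactly the claim.

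The main obstacle is establishing the key identity cleanly: keeping the parity checks straight and invoking the correct associativity and commutativity statements. A fully elementary alternative avoids the convolution calculus altogether: expand $S=\sum_n s_n(\varphi_n\otimes\varphi_n)$, observe $\check{\mu_c}\star S=\sum_{\lambda',n}c(\lambda')s_n\,(\pi(-\lambda')\varphi_n)\otimes(\pi(-\lambda')\varphi_n)$, and compute $Q_{\check{\mu_c}\star S}(\psi)(\lambda)$ directly from $Q_S(\psi)(z)=\sum_n s_n|\langle\psi,\pi(z)\varphi_n\rangle|^2$, the unimodular factor in $\pi(\lambda)\pi(-\lambda')$ being absorbed by the modulus; summing over $\lambda\in\Lambda$ and using translation invariance of the lattice then leads to the same bounds.
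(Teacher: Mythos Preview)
Your proposal is correct and follows essentially the same route as the paper: establish $Q_{\check{\mu_c}\star S}(\psi)=\mu_c*Q_S(\psi)$ via Proposition~\ref{prop:measure_op_conv_properties}~\ref{item:me_op_check} and~\ref{item:me_op_ass1}, then sum over $\Lambda$, interchange sums, and use translation invariance of the lattice together with the frame bounds for $(S,\Lambda)$. You are somewhat more careful than the paper in explicitly verifying positivity and trace-class membership of $\check{\mu_c}\star S$ (and in noting the implicit assumption $c\geq 0$), and your alternative spectral-expansion argument is a nice redundancy, but the core proof is the same.
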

    \begin{proof}
        We compute the sum
        \begin{align*}
            \sum_{\lambda \in \Lambda} Q_{\check{\mu_c} \star S}(\psi)(\lambda) &= \sum_{\lambda \in \Lambda} (\psi \otimes \psi) \star (\mu_c \star \check{S})(\lambda) \\
            &= \sum_{\lambda \in \Lambda} \mu_c * Q_S(\psi)(\lambda)\\
            &= \sum_{\lambda \in \Lambda} \sum_{\lambda' \in \Lambda} Q_S(\psi)(\lambda - \lambda') c(\lambda')\\
            &= \left(\sum_{\lambda \in \Lambda} c(\lambda)\right) \left( \sum_{\lambda \in \Lambda} Q_S(\psi)(\lambda) \right),
        \end{align*}
        where we used Proposition \ref{prop:measure_op_conv_properties} \ref{item:me_op_check} for the first equality, Proposition \ref{prop:measure_op_conv_properties} \ref{item:me_op_ass1} for the second and Fubini for the final. The desired conclusion now follows upon substituting into the frame condition \eqref{eq:weak_tightness}.
    \end{proof}
    
    \subsection{Eigenvalues of mixed-state Gabor multipliers}\label{sec:eigenvalues_of_gabor}
    Gabor multipliers or localization operators on lattices were first systematically investigated by Feichtinger and Nowak in \cite{Feichtinger2003} while the generalization of replacing the window by an operator was first discussed briefly in \cite[Section 4.1]{Skrettingland2020} but never named nor investigated further. Following the terminology of \cite{Luef2019}, when we replace the window of a Gabor multiplier with a trace-class operator, we refer to the resulting operator as a \emph{mixed-state} Gabor multiplier. We make this precise in the following definition.
    \begin{definition}
    Given a lattice $\Lambda \subset \R^{2d}$, an operator $S \in \mathcal{L}(L^2(\R^d))$ and a compact subset $\Omega \subset \R^{2d}$, we refer to the operator
    \begin{align}\label{eq:mixed_state_gabor_multiplier_def}
        G_{\Omega, \Lambda}^S = \sum_{\lambda \in \Lambda} \chi_\Omega(\lambda) \delta_\lambda \star S =: \mu_\Omega^\Lambda \star S
    \end{align}
    as a \emph{mixed-state Gabor multiplier} and write $\mu_\Omega^\Lambda = \sum_{\lambda \in \Lambda} \chi_\Omega(\lambda) \delta_\lambda$ for the measure.
    \end{definition} 
    In light of the reconstruction formula in Proposition \ref{prop:approximation_of_identity_lattice} above it makes sense that we will need to place similar conditions on $S$ and $\Lambda$ in order for mixed-state Gabor multipliers to be well behaved. We collect the properties we will need in the following definition.
    \begin{definition}
        Given a lattice $\Lambda$, an operator $S \in \mathcal{L}(L^2(\R^d))$ is said to be a \emph{density operator with respect to} $\Lambda$ if $S$ is positive, trace-class and such that $(S, \Lambda)$ is a tight mixed-state Gabor frame with frame constant $1$.
    \end{definition}
   Now we are ready to state our main theorem on the eigenvalues of mixed-state Gabor multipliers. It generalizes a result in \cite{Feichtinger2003}  by a method 
   similar to the way how the 
  corresponding  result for localization operators in    \cite{Feichtinger2001} 
  \cite{Luef2019_acc} was derived. 
    \begin{theorem}\label{theorem:measure_eigenvalues_almost_1}
        Let $S$ be a density operator with respect to $\Lambda$, let $\Omega \subset \R^{2d}$ be compact and fix $\delta \in (0,1)$. If $\{ \lambda_k^{R\Omega} \}_k$ are the eigenvalues of $G_{R\Omega, \Lambda}^S$, then
        \begin{align*}
            \frac{\# \big\{ k : \lambda_k^{R\Omega} > 1 - \delta \big\}}{|R\Omega \cap \Lambda| \tr(S)} \to 1\quad \text{as }R \to \infty.
	\end{align*}
    \end{theorem}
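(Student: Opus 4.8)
The plan is to show that $G_{R\Omega,\Lambda}^S$ is, up to a small trace-class error, a projection-like operator with $|R\Omega\cap\Lambda|\,\tr(S)$ eigenvalues close to $1$, and then deduce the eigenvalue-counting statement from a standard Weyl-type argument. The key structural observation is that $G_{R\Omega,\Lambda}^S=\mu_\Omega^{R\Lambda\text{-scaled}}\star S$ can be compared against the ``full'' reconstruction operator $\sum_{\lambda\in\Lambda}\pi(\lambda)S\pi(\lambda)^*=I$ from Proposition~\ref{prop:approximation_of_identity_lattice} (since $S$ is a density operator with respect to $\Lambda$, the frame constant is $1$). Concretely, I would first compute the trace: by Proposition~\ref{prop:measure_op_conv_properties}~\ref{item:me_op_trace}, $\tr(G_{R\Omega,\Lambda}^S)=\mu_\Omega^\Lambda(R\Omega)\cdot\tr(S)=|R\Omega\cap\Lambda|\,\tr(S)$, and similarly $\tr\big((G_{R\Omega,\Lambda}^S)^2\big)$ can be expanded via the operator-operator convolution identity $(\mu\star S)\star T=\mu*(S\star T)$ from Proposition~\ref{prop:measure_op_conv_properties}~\ref{item:me_op_ass1} together with Proposition~\ref{prop:sum_over_op_conv}.

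The core estimate I would aim for is
\begin{align*}
    \tr\big((G_{R\Omega,\Lambda}^S)^2\big) = |R\Omega\cap\Lambda|\,\tr(S) + o\big(|R\Omega\cap\Lambda|\big)\quad\text{as }R\to\infty,
\end{align*}
i.e.\ that the ``perimeter'' contribution is negligible compared to the ``volume'' contribution $|R\Omega\cap\Lambda|\sim |R\Omega|/\operatorname{vol}(\Lambda)$. This is where the compactness of $\Omega$ and the off-diagonal decay of $S\star\check S$ enter: writing $\tr\big((G_{R\Omega,\Lambda}^S)^2\big)=\sum_{\lambda,\lambda'}\chi_\Omega(\lambda/R)\chi_\Omega(\lambda'/R)\,(S\star\check S)(\lambda-\lambda')$ and using that $S\star\check S\in W(C_0,\ell^1)$ (or at least is summable over $\Lambda$ with total sum $\tr(S)^2$... actually $\sum_\mu (S\star\check S)(\mu)$ should evaluate via Proposition~\ref{prop:sum_over_op_conv} applied with $T=S\star\check S$-type manipulations), the terms with $\lambda$ near the boundary of $R\Omega$ contribute $O(|\partial(R\Omega)|)=O(R^{2d-1})=o(R^{2d})$. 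Combined with $0\le G_{R\Omega,\Lambda}^S\le I$ (positivity from Proposition~\ref{prop:measure_op_conv_properties}~\ref{item:me_op_pos}, and the upper bound $\le I$ because subtracting the missing nonnegative terms from the full sum $=I$ yields a positive operator), one gets that $\sum_k\lambda_k^{R\Omega}(1-\lambda_k^{R\Omega}) = \tr(G)-\tr(G^2) = o\big(|R\Omega\cap\Lambda|\big)$.

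From here the conclusion is routine: since each term $\lambda_k(1-\lambda_k)\ge 0$ and any eigenvalue in $[\delta,1-\delta]$ contributes at least $\delta(1-\delta)>0$, we get $\#\{k:\delta\le\lambda_k^{R\Omega}\le 1-\delta\}=o\big(|R\Omega\cap\Lambda|\big)$; meanwhile $\sum_k\lambda_k^{R\Omega}=|R\Omega\cap\Lambda|\,\tr(S)$ and $\lambda_k\le 1$ force $\#\{k:\lambda_k^{R\Omega}>1-\delta\}\ge \big(|R\Omega\cap\Lambda|\,\tr(S) - (1-\delta)\cdot(\text{number of remaining eigenvalues}) - \delta\cdot\#\{\text{small eigenvalues}}\big)/1$, and a two-sided squeeze of the same kind gives the ratio $\to 1$. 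The main obstacle I anticipate is establishing the boundary-versus-volume estimate rigorously, i.e.\ controlling $\sum_{\lambda\in R\Omega\cap\Lambda}\sum_{\lambda'\in(R\Omega)^c\cap\Lambda}(S\star\check S)(\lambda-\lambda')$; this requires knowing that $S\star\check S$ (a positive function in $C_0$ by Proposition~\ref{prop:op_conv_properties}~\ref{item:op_op_positive} and the mapping properties) restricts to a summable sequence on $\Lambda$ with a usable tail bound — most cleanly obtained if one additionally invokes that $S\star\check S\in W(C_0,\ell^1)(\R^{2d})$, which would need either an extra hypothesis on $S$ or an argument that the density-operator condition already forces enough regularity. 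I would handle this by reducing to a BUPU-type estimate over $\Omega$ and dominated convergence in $R$, mirroring the localization-operator arguments in \cite{Feichtinger2001, Luef2019_acc}.
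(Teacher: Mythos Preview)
Your proposal is correct and follows essentially the same route as the paper: compute $\tr(G_{R\Omega,\Lambda}^S)=|R\Omega\cap\Lambda|\tr(S)$, expand $\tr\big((G_{R\Omega,\Lambda}^S)^2\big)$ as the double sum $\sum_{\lambda,\lambda'\in R\Omega\cap\Lambda}\tilde S(\lambda'-\lambda)$ with $\tilde S=S\star\check S$, show this double sum is $|R\Omega\cap\Lambda|\tr(S)+o(|R\Omega\cap\Lambda|)$, and conclude via an eigenvalue-counting argument from $\sum_k\lambda_k(1-\lambda_k)=o(|R\Omega\cap\Lambda|)$ together with $0\le\lambda_k\le 1$.

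Two small remarks. First, your anticipated obstacle dissolves: you do \emph{not} need $S\star\check S\in W(C_0,\ell^1)$. The density-operator hypothesis gives, via Proposition~\ref{prop:sum_over_op_conv} with $T=S$, that $\sum_{\lambda\in\Lambda}\tilde S(\lambda)=\tr(S)$, and since $\tilde S\ge 0$ this is already a usable tail bound (choose a finite $R_0\Omega$ capturing all but $\varepsilon$ of the sum, then estimate the inner-boundary terms directly). The paper packages this as a separate lemma. Second, the paper carries out the final counting step via the auxiliary function $H(t)=\chi_{(1-\delta,1]}(t)-t$, noting $|H(t)|\le\max\{1/\delta,1/(1-\delta)\}(t-t^2)$, which gives $\big|\#\{k:\lambda_k>1-\delta\}-|R\Omega\cap\Lambda|\tr(S)\big|\le C_\delta\big(\tr G-\tr G^2\big)$ in one stroke; this is equivalent to, and a bit cleaner than, the two-sided squeeze you sketch.
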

    In the pathological case where $(R\Omega)_{R > 0}$ does not exhaust $\R^{2d}$ (e.g. $\Omega = \{ 0 \}$), the result is not meaningful and we from here on assume that this is not the case.
    
    Much of the proof and notation is analogous to that in \cite{Luef2019_acc} and before moving to the proof  we state and prove some important lemmata.
    \begin{lemma}\label{lemma:eigenvalue_bounds}
        Let $S$ be a density operator with respect to $\Lambda$ and $\Omega \subset \R^{2d}$ a compact domain. Then the eigenvalues $\big\{ \lambda_k^\Omega \big\}_k$ of $G_{\Omega, \Lambda}^S$ satisfy $0 \leq \lambda_k^\Omega \leq 1$.
    \end{lemma}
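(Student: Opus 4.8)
The plan is to establish the operator inequality $0 \le G_{\Omega,\Lambda}^S \le I_{L^2}$, from which the statement about the $\lambda_k^\Omega$ is immediate once we know $G_{\Omega,\Lambda}^S$ is a self-adjoint compact operator. Self-adjointness follows from Proposition \ref{prop:measure_op_conv_properties}\ref{item:me_op_adjoint}, since $\mu_\Omega^\Lambda$ is a real (indeed positive) measure and $S = S^*$; compactness follows from Proposition \ref{prop:measure_op_conv_properties}\ref{item:me_op_comp} (in fact, since $\Omega$ is compact and $\Lambda$ discrete, $\Omega \cap \Lambda$ is finite, so $G_{\Omega,\Lambda}^S$ is a finite sum of unitary conjugates of the trace-class operator $S$, hence trace-class). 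Thus $G_{\Omega,\Lambda}^S$ has a genuine decreasing sequence of eigenvalues $\{\lambda_k^\Omega\}_k$, and the two-sided bound $0 \le G_{\Omega,\Lambda}^S \le I_{L^2}$ forces $0 \le \lambda_k^\Omega \le 1$ for all $k$.

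For the lower bound, observe that $\mu_\Omega^\Lambda = \sum_{\lambda \in \Lambda} \chi_\Omega(\lambda)\delta_\lambda$ is a positive measure and $S$ is positive by hypothesis, so $G_{\Omega,\Lambda}^S = \mu_\Omega^\Lambda \star S$ is a positive operator by Proposition \ref{prop:measure_op_conv_properties}\ref{item:me_op_pos}; in particular $\lambda_k^\Omega \ge 0$.

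For the upper bound, fix $\psi \in L^2(\R^d)$ and use the weak description of the convolution (Theorem \ref{thm:weak_def_of_measure_operator_convolution}, applicable since $S \in \mathcal{S}^1$) to write
$$\big\langle G_{\Omega,\Lambda}^S \psi, \psi \big\rangle = \sum_{\lambda \in \Lambda} \chi_\Omega(\lambda)\big\langle \pi(\lambda) S \pi(\lambda)^* \psi, \psi \big\rangle = \sum_{\lambda \in \Lambda} \chi_\Omega(\lambda)\, Q_S(\psi)(\lambda).$$
Each term $Q_S(\psi)(\lambda) = \big\langle S\pi(\lambda)^*\psi, \pi(\lambda)^*\psi \big\rangle$ is nonnegative because $S \ge 0$, and $0 \le \chi_\Omega(\lambda) \le 1$, so dropping the indicator only enlarges the (nonnegative, unconditionally convergent) sum:
$$\big\langle G_{\Omega,\Lambda}^S \psi, \psi \big\rangle \le \sum_{\lambda \in \Lambda} \big\langle \pi(\lambda) S \pi(\lambda)^* \psi, \psi \big\rangle.$$
Since $S$ is a density operator with respect to $\Lambda$, the pair $(S,\Lambda)$ is a tight mixed-state Gabor frame with frame constant $1$, so Proposition \ref{prop:approximation_of_identity_lattice} gives $\sum_{\lambda \in \Lambda} \pi(\lambda) S \pi(\lambda)^* \psi = \psi$ in $L^2(\R^d)$; pairing with $\psi$ yields $\sum_{\lambda \in \Lambda} \big\langle \pi(\lambda) S \pi(\lambda)^* \psi, \psi \big\rangle = \Vert \psi \Vert^2$ (equivalently, this is the tight-frame identity \eqref{eq:weak_tightness} with $A = B = 1$ applied to $\sum_{\lambda \in \Lambda} Q_S(\psi)(\lambda)$). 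Hence $\big\langle G_{\Omega,\Lambda}^S \psi, \psi \big\rangle \le \Vert \psi \Vert^2$ for all $\psi$, i.e. $G_{\Omega,\Lambda}^S \le I_{L^2}$, which completes the proof.

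I do not anticipate any substantial obstacle: morally, $G_{\Omega,\Lambda}^S$ is a partial sum, with nonnegative summands, of the resolution of the identity furnished by Proposition \ref{prop:approximation_of_identity_lattice}. The only points worth a word of care are the unconditional convergence of $\sum_{\lambda \in \Lambda}\big\langle \pi(\lambda) S \pi(\lambda)^* \psi, \psi \big\rangle$ (automatic since all terms are $\ge 0$, with limit $\Vert\psi\Vert^2$ supplied by Proposition \ref{prop:approximation_of_identity_lattice}), which is what legitimizes discarding the factors $\chi_\Omega(\lambda)$, and the preliminary remark that $G_{\Omega,\Lambda}^S$ is genuinely a compact self-adjoint operator so that speaking of its eigenvalue sequence is meaningful.
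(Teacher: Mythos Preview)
Your proof is correct and follows essentially the same approach as the paper: positivity of $\mu_\Omega^\Lambda \star S$ gives the lower bound, and for the upper bound you expand $\langle G_{\Omega,\Lambda}^S\psi,\psi\rangle$ via the weak formulation, drop the factor $\chi_\Omega$ using nonnegativity of the summands, and invoke the tight-frame identity $\sum_{\lambda\in\Lambda} Q_S(\psi)(\lambda)=\Vert\psi\Vert^2$. The only difference is cosmetic: the paper applies this estimate directly to the normalized eigenfunctions $h_k^\Omega$, whereas you phrase it as the operator inequality $0\le G_{\Omega,\Lambda}^S\le I_{L^2}$ and add the (harmless) preliminary remarks on self-adjointness and compactness.
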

    \begin{proof}
        By the positivity of $G_{\Omega, \Lambda}^S$ from Proposition \ref{prop:measure_op_conv_properties} \ref{item:me_op_pos}, the eigenvalues are non-negative and real-valued. For the upper bound we, let $h_k^\Omega$ denote the eigenfunction associated to $\lambda_k^\Omega$, then
        \begin{align*}
            \lambda_k^\Omega &= \big\langle G_{\Omega, \Lambda}^S h_k^\Omega, h_k^\Omega \big\rangle = \left\langle \left( \sum_{\lambda \in \Lambda} \chi_\Omega(\lambda) \delta_\lambda \star S\right) h_k^\Omega, h_k^\Omega \right\rangle\\
            &\leq \sum_{\lambda \in \Lambda} \big\langle \pi(\lambda) S \pi(\lambda)^* h_k^\Omega, h_k^\Omega \big\rangle = \sum_{\lambda \in \Lambda} Q_S(h_k^\Omega)(\lambda) = \big\Vert h_k^\Omega \big\Vert^2 = 1.
        \end{align*}
    \end{proof}
    In the following, the function $\tilde{S} = S \star \check{S} : \R^{2d} \to \R$ will play an important role. We start by proving some of its properties.
    \begin{lemma}\label{lemma:square_trace_expression}
        Let $G_{\Omega, \Lambda}^S$ be a mixed-state Gabor multiplier. Then
        $$
        \tr\big(\big( G_{\Omega, \Lambda}^S \big)^2\big) = \sum_{\lambda \in \Lambda \cap \Omega} \sum_{\lambda' \in \Lambda \cap \Omega}     \tilde{S}(\lambda'-\lambda).
        $$
    \end{lemma}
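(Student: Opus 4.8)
The plan is to use that $\Lambda\cap\Omega$ is finite, so that $G_{\Omega,\Lambda}^S=\sum_{\lambda\in\Lambda\cap\Omega}\pi(\lambda)S\pi(\lambda)^*$ is a genuine finite sum of operators (and of trace-class operators in the relevant case $S\in\mathcal{S}^1$). I would square this expression, take the trace term by term, and recognize each resulting summand as an evaluation of the operator-operator convolution from \eqref{eq:op_op_conv_def}. Expanding by bilinearity,
\begin{align*}
\big(G_{\Omega,\Lambda}^S\big)^2=\sum_{\lambda\in\Lambda\cap\Omega}\sum_{\lambda'\in\Lambda\cap\Omega}\pi(\lambda)S\pi(\lambda)^*\pi(\lambda')S\pi(\lambda')^*,
\end{align*}
and since the sum is finite, $\tr\big((G_{\Omega,\Lambda}^S)^2\big)=\sum_{\lambda,\lambda'\in\Lambda\cap\Omega}\tr\big(\pi(\lambda)S\pi(\lambda)^*\pi(\lambda')S\pi(\lambda')^*\big)$, where I may apply cyclicity of the trace freely on each summand.

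The key step is the identification of a single summand. For fixed $(\lambda,\lambda')$, cyclicity rewrites $\tr\big(\pi(\lambda)S\pi(\lambda)^*\pi(\lambda')S\pi(\lambda')^*\big)=\tr\big(S\,U\,S\,U^*\big)$ with $U:=\pi(\lambda)^*\pi(\lambda')$, using $\pi(\lambda')^*\pi(\lambda)=U^*$. The composition law for time-frequency shifts gives $U=c\,\pi(\lambda'-\lambda)$ for a unimodular scalar $c=c(\lambda,\lambda')$; since $U$ contributes a factor $c$ and $U^*$ the conjugate factor $\bar c$, these cancel and $S\,U\,S\,U^*=S\,\pi(\lambda'-\lambda)\,S\,\pi(\lambda'-\lambda)^*$. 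Finally, from \eqref{eq:op_op_conv_def}, $(T\star S)(z)=\tr\big(T\pi(z)\check{S}\pi(z)^*\big)$; taking $T=S$ and applying this with $\check S$ in place of $S$ (so that $(\check S)\check{}=PPSPP=S$) yields $(S\star\check{S})(z)=\tr\big(S\pi(z)S\pi(z)^*\big)$. Hence the summand equals $(S\star\check S)(\lambda'-\lambda)=\tilde S(\lambda'-\lambda)$, and summing over $\lambda,\lambda'\in\Lambda\cap\Omega$ gives the claim.

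This proof is essentially bookkeeping and I do not expect a serious obstacle. The two points needing care are: the additivity and finiteness of the trace, which are immediate because $\Lambda\cap\Omega$ is finite and each $\pi(\lambda)S\pi(\lambda)^*$ (hence $(G_{\Omega,\Lambda}^S)^2$) is trace class when $S\in\mathcal{S}^1$; and the handling of the Heisenberg cocycle, where the only fact required is that $\pi(\lambda)^*\pi(\lambda')$ is a unimodular multiple of $\pi(\lambda'-\lambda)$, the scalar cancelling automatically in the product $S\,U\,S\,U^*$ since $U$ and $U^*$ carry conjugate phases. No explicit form of the phase is needed.
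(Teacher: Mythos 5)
Your proof is correct, but it takes a genuinely different route from the paper's. You expand $G_{\Omega,\Lambda}^S$ as the finite sum $\sum_{\lambda\in\Lambda\cap\Omega}\pi(\lambda)S\pi(\lambda)^*$, square, apply cyclicity of the trace to each summand, and then use the Weyl commutation relations to see that $\pi(\lambda)^*\pi(\lambda')$ is a unimodular multiple of $\pi(\lambda'-\lambda)$ whose phase cancels against its conjugate in $S\,U\,S\,U^*$ --- a hands-on computation whose only nontrivial inputs are the composition law for time-frequency shifts and the identity $(S\star\check S)(z)=\tr\bigl(S\pi(z)S\pi(z)^*\bigr)$, both of which you use correctly. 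The paper instead stays entirely inside the convolution calculus: it writes $\tr\bigl((G_{\Omega,\Lambda}^S)^2\bigr)=\bigl(\mu_\Omega^\Lambda\star S\bigr)\star\bigl(\check\mu_\Omega^\Lambda\star\check S\bigr)(0)$ and then applies the covariance of the check operation (Proposition \ref{prop:measure_op_conv_properties} \ref{item:me_op_check}) and the associativity $(\mu\star S)\star T=\mu*(S\star T)$ (Proposition \ref{prop:measure_op_conv_properties} \ref{item:me_op_ass1}) to reduce everything to $\mu_\Omega^\Lambda*\bigl(\check\mu_\Omega^\Lambda*\tilde S\bigr)(0)$, which unwinds to the same double sum. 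Your argument is more elementary and self-contained (no appeal to the measure-operator machinery, and the cocycle cancellation is made explicit), but it relies on $\Lambda\cap\Omega$ being finite; the paper's argument is the abstract version of the same bookkeeping and would survive replacing $\mu_\Omega^\Lambda$ by a general bounded measure, which is in keeping with the framework the paper is advertising. Both are valid proofs of the lemma as stated.
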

    \begin{proof}
    Note first that $\tr\big(\big(G_{\Omega, \Lambda}^S\big)^2\big) = G_{\Omega, \Lambda}^S \star \check{G}_{\Omega,\Lambda}^S(0)$ by \eqref{eq:op_op_conv_def} and hence by Proposition \ref{prop:measure_op_conv_properties} \ref{item:me_op_check}, Proposition \ref{prop:measure_op_conv_properties} \ref{item:me_op_ass1} and the symmetry of $\star$,
    \begin{align*}
        \tr\big(\big( G_{\Omega, \Lambda}^S \big)^2\big) &= \big(\mu_\Omega^\Lambda \star S\big) \star \big(\check{\mu}_\Omega^\Lambda \star \check{S}\big)(0)\\
        &= \mu_\Omega^\Lambda * \big(\check{\mu}_\Omega^\Lambda * (S \star \check{S})\big)(0)\\
        &= \int_{\R^{2d}} \check{\mu}_\Omega^\Lambda * \tilde{S}(0-z)\,d\mu_\Omega^\Lambda(z)\\
        &= \sum_{\lambda \in \Omega \cap \Lambda} \check{\mu}_\Omega^\Lambda * \tilde{S}(-\lambda)\\
        &= \sum_{\lambda \in \Omega \cap \Lambda} \int_{\R^{2d}} \tilde{S}(-z-\lambda)\, d\check{\mu}_\Omega^\Lambda(z)\\
        &= \sum_{\lambda \in \Omega \cap \Lambda} \sum_{\lambda' \in \Omega \cap \Lambda} \tilde{S}(\lambda'-\lambda).
    \end{align*}
    \end{proof}
    
    \begin{lemma}
    Let $S$ be a density operator with respect to $\Lambda$. Then $\tilde{S}$ is non-negative and
    $$
    \sum_{\lambda \in \Lambda} \tilde{S}(\lambda) = \tr(S).
    $$
    \end{lemma}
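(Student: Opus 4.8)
The plan is to treat the two assertions separately, each reducing to a result already established in the excerpt.

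For the non-negativity of $\tilde S$, first unwind the definition of the operator--operator convolution in \eqref{eq:op_op_conv_def}. Since $\check{\check S} = PPSPP = S$, we have
$\tilde S(z) = S \star \check S(z) = \tr\big(S\,\pi(z)S\pi(z)^*\big)$.
Writing $S = \sqrt S\,\sqrt S$ (legitimate because $S$ is positive) and using cyclicity of the trace, this equals $\tr\big((\sqrt S\,\pi(z)\sqrt S)(\sqrt S\,\pi(z)\sqrt S)^*\big) = \big\Vert \sqrt S\,\pi(z)\sqrt S\big\Vert_{\mathcal{S}^2}^2 \ge 0$. Alternatively, one may simply note that $\check S = PSP$ is positive because $P$ is a self-adjoint unitary, and then invoke Proposition \ref{prop:op_conv_properties} \ref{item:op_op_positive}; either route is short. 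A small point to check along the way is that the cyclic rearrangement is valid, which follows since $S \in \mathcal{S}^1$ and $\pi(z)$ is bounded, so every operator appearing is trace class and the traces converge absolutely.

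For the summation formula, the idea is to apply Proposition \ref{prop:sum_over_op_conv} with $T = S$. By hypothesis $S$ is a density operator with respect to $\Lambda$, so $S \in \mathcal{S}^1$ and $(S,\Lambda)$ is a tight mixed-state Gabor frame with frame constant $1$, which are exactly the hypotheses of that proposition. Its conclusion reads $\sum_{\lambda\in\Lambda} S\star\check S(\lambda) = \tr(S)$, which is precisely $\sum_{\lambda\in\Lambda}\tilde S(\lambda) = \tr(S)$.

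There is essentially no genuine obstacle: the statement is a direct corollary of Proposition \ref{prop:op_conv_properties} \ref{item:op_op_positive} (or the short Hilbert--Schmidt computation) together with Proposition \ref{prop:sum_over_op_conv}. The only thing requiring care is bookkeeping with the parity operator --- verifying $\check{\check S} = S$ and that conjugation by $P$ preserves positivity --- and confirming absolute convergence of the traces so that cyclic permutation is permitted.
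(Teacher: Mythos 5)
Your proof is correct and follows essentially the same route as the paper: non-negativity via Proposition \ref{prop:op_conv_properties} \ref{item:op_op_positive} and the summation formula by applying Proposition \ref{prop:sum_over_op_conv} with $T = S$. The extra Hilbert--Schmidt computation and the parity bookkeeping are fine but not needed beyond what the cited propositions already provide.
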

    \begin{proof}
    Non-negativity follows by Proposition \ref{prop:op_conv_properties} \ref{item:op_op_positive} and the sum follows from $(S, \Lambda)$ being a tight mixed-state Gabor frame with frame constant $1$ by Proposition \ref{prop:sum_over_op_conv}.
    \end{proof}
    
    \begin{lemma}\label{lemma:mixed_state_trace}
        Let $S$ be a density operator with respect to $\Lambda$ and $\big\{ \lambda_k^\Omega \big\}_k$ the eigenvalues of $G_{\Omega, \Lambda}^S$ counted with multiplicity. Then
        $$
        \sum_k \lambda_k^\Omega = |\Omega \cap \Lambda|\tr(S).
        $$
    \end{lemma}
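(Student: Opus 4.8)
The plan is to identify $\sum_k \lambda_k^\Omega$ with the trace of $G_{\Omega,\Lambda}^S$ and then invoke the trace formula for measure-operator convolutions. First I would note that $\mu_\Omega^\Lambda = \sum_{\lambda \in \Lambda} \chi_\Omega(\lambda)\delta_\lambda$ is a \emph{finite} positive measure: since $\Omega$ is compact and $\Lambda$ is a lattice, the set $\Omega \cap \Lambda$ is finite, so $\mu_\Omega^\Lambda \in M(\R^{2d})$ with total mass $\mu_\Omega^\Lambda(\R^{2d}) = |\Omega \cap \Lambda|$. By Corollary \ref{corollary:uniqueness} (equivalently Proposition \ref{prop:interpolated_mapping_meas_op} with $p=1$), $G_{\Omega,\Lambda}^S = \mu_\Omega^\Lambda \star S \in \mathcal{S}^1$, and by Proposition \ref{prop:measure_op_conv_properties} \ref{item:me_op_pos} it is positive because $\mu_\Omega^\Lambda$ is a positive measure and $S$ is a positive operator.

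Next, since $G_{\Omega,\Lambda}^S$ is a positive trace-class operator, its eigenvalues $\{\lambda_k^\Omega\}_k$ are non-negative, and expanding the trace in an orthonormal eigenbasis of $G_{\Omega,\Lambda}^S$ shows $\sum_k \lambda_k^\Omega = \tr(G_{\Omega,\Lambda}^S)$. Finally I would apply Proposition \ref{prop:measure_op_conv_properties} \ref{item:me_op_trace}, which gives $\tr(\mu_\Omega^\Lambda \star S) = \mu_\Omega^\Lambda(\R^{2d})\tr(S) = |\Omega \cap \Lambda|\tr(S)$, yielding the claim.

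There is no substantial obstacle here; the only points deserving a word of justification are that $\Omega \cap \Lambda$ is finite (so that $\mu_\Omega^\Lambda$ is a genuine bounded measure and $G_{\Omega,\Lambda}^S$ is trace-class, allowing the trace formula to apply) and that for a positive trace-class operator the trace coincides with the sum of eigenvalues counted with multiplicity. I would also remark that the full strength of the density-operator hypothesis is not needed for this lemma --- only that $S$ is positive and trace-class --- though phrasing it in terms of density operators keeps the statement uniform with the surrounding results in Section \ref{sec:eigenvalues_of_gabor}.
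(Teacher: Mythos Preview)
Your proposal is correct and follows essentially the same route as the paper: identify $\sum_k \lambda_k^\Omega$ with $\tr(G_{\Omega,\Lambda}^S)$ and then apply Proposition \ref{prop:measure_op_conv_properties} \ref{item:me_op_trace}. The only cosmetic difference is that the paper invokes Lidskii's theorem for the first step, whereas you use the more elementary (and sufficient) observation that for a positive trace-class operator the trace computed in an eigenbasis gives the eigenvalue sum directly; your remark that only positivity and trace-class are needed is also apt.
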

    \begin{proof}
    By a theorem of Lidskii \cite{Simon2005}, the sum of the eigenvalues counted with multiplicity is equal to the trace of the operator and so the result follows from Proposition \ref{prop:measure_op_conv_properties} \ref{item:me_op_trace}.
    \end{proof}
    
    The next lemma is our replacement for the approximation of the identity argument used in the corresponding proofs in \cite{Luef2019_acc} and \cite{Abreu2015}.
    \begin{lemma}\label{lemma:approximate_identity_replacement}
    Let $S$ be a density operator with respect to $\Lambda$. Then
    \begin{align*}
        \lim_{R \to \infty}\left|\frac{1}{|R\Omega \cap \Lambda|} \sum_{\lambda \in R\Omega \cap \Lambda} \sum_{\lambda' \in R\Omega \cap \Lambda} \tilde{S}(\lambda'-\lambda) - \tr(S)\right| = 0.
    \end{align*}
    \end{lemma}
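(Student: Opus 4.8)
The plan is to reindex the double sum by the lattice difference $\nu=\lambda'-\lambda$ and then recognize the result as a counting-measure average of $\tilde S$ against weights that tend to $1$; dominated convergence together with the already-established identity $\sum_{\nu\in\Lambda}\tilde S(\nu)=\tr(S)$ will then finish. Concretely, set $A_R:=R\Omega\cap\Lambda$. Since $\tilde S$ depends only on the difference of its arguments,
\begin{align*}
\sum_{\lambda\in A_R}\sum_{\lambda'\in A_R}\tilde S(\lambda'-\lambda) = \sum_{\nu\in\Lambda}\tilde S(\nu)\,\big|A_R\cap(A_R-\nu)\big|,
\end{align*}
because for fixed $\nu\in\Lambda$ the number of pairs $(\lambda,\lambda')\in A_R\times A_R$ with $\lambda'-\lambda=\nu$ equals $|A_R\cap(A_R-\nu)|$ (here $A_R-\nu=(R\Omega-\nu)\cap\Lambda$ since $\nu\in\Lambda$). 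Dividing by $|A_R|$, the quantity inside the limit becomes $\big|\sum_{\nu\in\Lambda}\tilde S(\nu)w_R(\nu)-\tr(S)\big|$ with $w_R(\nu):=|A_R\cap(A_R-\nu)|/|A_R|\in[0,1]$.

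I would then record two facts. First, $|A_R|\to\infty$ as $R\to\infty$: since $(R\Omega)_{R>0}$ exhausts $\R^{2d}$ and $\Omega$ is a compact domain, $R\Omega$ eventually contains balls of arbitrarily large radius and hence arbitrarily many points of $\Lambda$. Second, for each fixed $\nu\in\Lambda$ one has $w_R(\nu)\to1$; indeed
\begin{align*}
|A_R|-|A_R\cap(A_R-\nu)| = \big|\{\lambda\in R\Omega\cap\Lambda:\lambda+\nu\notin R\Omega\}\big|,
\end{align*}
which is bounded by the number of lattice points in the boundary layer $\{z\in R\Omega:\operatorname{dist}(z,(R\Omega)^c)\le|\nu|\}$; for such $\Omega$ this layer has volume $O(R^{2d-1})$ whereas $|R\Omega|=R^{2d}|\Omega|$, so the lattice-point ratio is $o(1)$ and $w_R(\nu)\to1$.

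Finally, since $\tilde S\ge0$ by the preceding lemma we have $0\le w_R(\nu)\tilde S(\nu)\le\tilde S(\nu)$, and $\sum_{\nu\in\Lambda}\tilde S(\nu)=\tr(S)<\infty$, so dominated convergence on $\Lambda$ equipped with the counting measure gives $\sum_{\nu\in\Lambda}\tilde S(\nu)w_R(\nu)\to\sum_{\nu\in\Lambda}\tilde S(\nu)=\tr(S)$, which is precisely the claim; equivalently, split $\Lambda=F\cup(\Lambda\setminus F)$ with $F$ finite and $\sum_{\nu\notin F}\tilde S(\nu)<\varepsilon$, use $0\le w_R\le1$ on the tail and the pointwise limit on $F$, then let $R\to\infty$. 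The only non-routine step is the second fact above, i.e.\ the F\o{}lner-type negligibility of the boundary layer of $R\Omega$ relative to its bulk, and it is exactly there that the standing assumption that $(R\Omega)_{R>0}$ exhausts $\R^{2d}$ (and that $\Omega$ is a reasonable compact domain) is used; everything else is bookkeeping plus the identity $\sum_{\nu\in\Lambda}\tilde S(\nu)=\tr(S)$.
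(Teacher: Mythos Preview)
Your argument is correct and follows a route parallel to, but organized differently from, the paper's. You reindex the double sum by the difference $\nu=\lambda'-\lambda$, introduce the weights $w_R(\nu)=|A_R\cap(A_R-\nu)|/|A_R|$, and apply dominated convergence on $\ell^1(\Lambda)$ using $0\le w_R\le 1$ and the identity $\sum_{\nu\in\Lambda}\tilde S(\nu)=\tr(S)$. The paper instead keeps the iterated sum and argues directly: for $\lambda$ in a slightly shrunk dilate $R'\Omega$ (with $R'=R-\operatorname{diam}(R_0\Omega)$), the inner sum $\sum_{\lambda'\in R\Omega\cap\Lambda}\tilde S(\lambda'-\lambda)$ already captures all of $R_0\Omega\cap\Lambda$ and hence exceeds $\tr(S)-\varepsilon$; combined with $|R'\Omega\cap\Lambda|/|R\Omega\cap\Lambda|\to1$ this yields the lower bound, and the upper bound is automatic from $\tilde S\ge0$.

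Both arguments ultimately rest on the same F\o{}lner-type fact---that the ``boundary'' of $R\Omega\cap\Lambda$ is negligible relative to its bulk---but they invoke it in dual ways: you shift by a \emph{fixed} $\nu$ and count lost points (an additive boundary layer), whereas the paper shrinks $\Omega$ \emph{multiplicatively} to carve out an interior where every inner sum is already near its limit. Your packaging via dominated convergence is arguably cleaner, giving the upper and lower bounds simultaneously rather than treating them asymmetrically. One small remark: your claim that the boundary layer has volume $O(R^{2d-1})$ is stronger than what you actually need; $o(R^{2d})$ suffices, and that follows already from continuity of Lebesgue measure under small translations for any Jordan-measurable $\Omega$, so you can relax the ``reasonable compact domain'' caveat accordingly.
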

    \begin{proof}
    Fix $\varepsilon > 0$ and note that since $\sum_{\lambda \in \Lambda} \tilde{S}(\lambda) = \tr(S)$, we can find a sufficiently large $R_0$ so that
    $$
    \sum_{\lambda \in R_0\Omega \cap \Lambda} \tilde{S}(\lambda) > \tr(S) - \varepsilon.
    $$
    Note also that if $\lambda \in (R-\operatorname{diam}(R_0\Omega))\Omega \cap \Lambda$ then 
    $$
    R_0 \Omega \cap \Lambda \subset \big\{ \lambda - \lambda' : \lambda ' \in R\Omega \cap \Lambda \big\}.
    $$
    Now let $R' = R-\operatorname{diam}(R_0\Omega)$ and write
    \begin{align*}
        \frac{1}{|R\Omega \cap \Lambda|} \sum_{\lambda \in R\Omega \cap \Lambda} \sum_{\lambda' \in R\Omega \cap \Lambda} \tilde{S}(\lambda'-\lambda) &\geq \frac{1}{|R\Omega \cap \Lambda|} \sum_{\lambda \in R'\Omega \cap \Lambda} \sum_{\lambda' \in R \Omega \cap \Lambda} \tilde{S}(\lambda'-\lambda)\\
        &\geq \frac{1}{|R\Omega \cap \Lambda|} \sum_{\lambda \in R'\Omega \cap \Lambda} \sum_{\lambda' \in R_0 \Omega \cap \Lambda} \tilde{S}(\lambda')\\
        &>\frac{|R'\Omega \cap \Lambda|}{|R\Omega \cap \Lambda|}\big(\tr(S)-\varepsilon\big).
    \end{align*}
    Since $\frac{R'}{R} \to 1$ as $R \to \infty$, the quantity can be made arbitrarily close to $\tr(S)$. Hence the desired conclusion follows.
    \end{proof}
    
    \begin{lemma}\label{lemma:eigenvalue_inequality_estimate}
        Let $S$ be a density operator with respect to $\Lambda$. Then for each $\delta \in (0,1)$,
        $$
        \Big| \# \big\{ k : \lambda_k^\Omega > 1-\delta \big\} - |\Omega \cap \Lambda|\tr(S) \Big| \leq \max\left\{ \frac{1}{\delta}, \frac{1}{1-\delta} \right\} \left| \sum_{\lambda \in \Omega \cap \Lambda} \sum_{\lambda' \in \Omega \cap \Lambda} \tilde{S}(\lambda' - \lambda) - |\Omega \cap \Lambda|\tr(S)\right|.
        $$
    \end{lemma}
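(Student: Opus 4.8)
The plan is to translate both sides into statements about the eigenvalues $\{\lambda_k^\Omega\}_k$ and then invoke an elementary scalar inequality. Since $\Omega$ is compact and $\Lambda$ discrete, the set $\Omega\cap\Lambda$ is finite, so $G_{\Omega,\Lambda}^S=\sum_{\lambda\in\Omega\cap\Lambda}\pi(\lambda)S\pi(\lambda)^*$ is a finite sum of conjugates of the trace-class operator $S$; in particular $G_{\Omega,\Lambda}^S$ and $(G_{\Omega,\Lambda}^S)^2$ are both trace-class and their eigenvalue sequences are absolutely summable. By Lemma \ref{lemma:mixed_state_trace} (Lidskii's theorem) one has $\sum_k\lambda_k^\Omega=|\Omega\cap\Lambda|\tr(S)$, and by Lemma \ref{lemma:square_trace_expression} together with $\tr\big((G_{\Omega,\Lambda}^S)^2\big)=\sum_k(\lambda_k^\Omega)^2$ (valid since $G_{\Omega,\Lambda}^S$ is positive, hence self-adjoint) one has $\sum_k(\lambda_k^\Omega)^2=\sum_{\lambda,\lambda'\in\Omega\cap\Lambda}\tilde S(\lambda'-\lambda)$. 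Subtracting, the expression inside the absolute value on the right-hand side of the lemma equals
\[
\sum_k\lambda_k^\Omega-\sum_k(\lambda_k^\Omega)^2=\sum_k\lambda_k^\Omega\big(1-\lambda_k^\Omega\big),
\]
which is non-negative by Lemma \ref{lemma:eigenvalue_bounds}; hence the absolute value may be dropped, and it suffices to bound $\big|\#\{k:\lambda_k^\Omega>1-\delta\}-\sum_k\lambda_k^\Omega\big|$ by $\max\{\delta^{-1},(1-\delta)^{-1}\}\sum_k\lambda_k^\Omega(1-\lambda_k^\Omega)$.

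For the latter I would use the pointwise estimate that for every $t\in[0,1]$,
\[
\big|\chi_{(1-\delta,1]}(t)-t\big|\le\max\Big\{\tfrac1\delta,\tfrac1{1-\delta}\Big\}\,t(1-t),
\]
which is checked by a two-line case distinction: if $t>1-\delta$ the left side is $1-t$ and, after dividing by $1-t$, the claim reads $t\ge1-\delta$; if $t\le1-\delta$ the left side is $t$ and, after dividing by $t$, the claim reads $1-t\ge\delta$. Applying this with $t=\lambda_k^\Omega$, summing over $k$, and using $\big|\sum_k a_k\big|\le\sum_k|a_k|$ (legitimate by the absolute summability noted above, and because only finitely many $\lambda_k^\Omega$ exceed $1-\delta$) yields exactly the required bound, since $\#\{k:\lambda_k^\Omega>1-\delta\}=\sum_k\chi_{(1-\delta,1]}(\lambda_k^\Omega)$. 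Combining the two displays finishes the proof.

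I do not expect a genuine obstacle here: the real content is the identification of the right-hand bracket with $\tr(G_{\Omega,\Lambda}^S)-\tr\big((G_{\Omega,\Lambda}^S)^2\big)=\sum_k\lambda_k^\Omega(1-\lambda_k^\Omega)\ge 0$ and the elementary scalar inequality above. The only points needing a word of care are (i) that all the relevant operators are trace-class, so that Lidskii's theorem, the identity $\tr(G^2)=\sum_k(\lambda_k^\Omega)^2$, and the rearrangement of sums are all justified — which is immediate from the finiteness of $\Omega\cap\Lambda$ — and (ii) that eigenvalues are counted with multiplicity throughout, consistently in the trace formulas of Lemmas \ref{lemma:square_trace_expression} and \ref{lemma:mixed_state_trace} and in the counting function $\#\{k:\lambda_k^\Omega>1-\delta\}$.
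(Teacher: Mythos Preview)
Your proof is correct and essentially the same as the paper's: the paper defines $H(t)=\chi_{(1-\delta,1]}(t)-t$ via functional calculus, observes $\tr\big(H(G_{\Omega,\Lambda}^S)\big)=\#\{k:\lambda_k^\Omega>1-\delta\}-|\Omega\cap\Lambda|\tr(S)$, and bounds $|H(t)|\le\max\{\delta^{-1},(1-\delta)^{-1}\}\,t(1-t)$, which is exactly your scalar inequality. Your version is slightly more explicit in verifying the pointwise bound and in noting that the right-hand bracket is non-negative, but the argument is the same.
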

    \begin{proof}
        Define the operator $H(G_{\Omega, \Lambda}^S)$ using the eigendecomposition $G_{\Omega, \Lambda}^S = \sum_k \lambda_k^\Omega (h_k^\Omega \otimes h_k^\Omega)$ as
        $$
            H\big( G_{\Omega, \Lambda}^S \big) = \sum_k H(\lambda_k^\Omega) (h_k^\Omega \otimes h_k^\Omega) ,\qquad H(t) = \begin{cases}
		        -t  \qquad &\text{if } 0 \leq t \leq 1-\delta,\\
		        1-t \qquad &\text{if } 1-\delta < t \leq 1,
		    \end{cases}
        $$
		which is well defined since $0 \leq \lambda_k^\Omega \leq 1$ by Lemma \ref{lemma:eigenvalue_bounds}. From this it follows that 
		\begin{align*}
			\tr\big(H\big( G_{\Omega, \Lambda}^S \big)\big) &= \sum_k H(\lambda_k^\Omega)= \#\big\{ k : \lambda_k^\Omega > 1-\delta \big\} - |\Omega \cap \Lambda|\tr(S)
		\end{align*}
		by Lemma \ref{lemma:mixed_state_trace} and the definition of $H$. Hence
		\begin{align*}
			\Big| \#\big\{ k : \lambda_k^\Omega > 1-\delta \big\} - |\Omega \cap \Lambda|\tr(S) \Big| = \big|\tr\big(H\big( G_{\Omega, \Lambda}^S \big)\big)\big| &\leq \tr\big(|H|\big( G_{\Omega, \Lambda}^S \big)\big).
		\end{align*}
		The function $H$ can be bounded as
	    $$
	    |H(t)| \leq \max\left\{ \frac{1}{\delta}, \frac{1}{1-\delta} \right\}(t - t^2),
	    $$
		and so it follows that
		\begin{align*}
		    \Big| \#\big\{ k : \lambda_k^\Omega > 1-\delta \big\} - |\Omega \cap \Lambda|\tr(S) \Big| &\leq \max\left\{ \frac{1}{\delta}, \frac{1}{1-\delta} \right\} \tr\Big( G_{\Omega, \Lambda}^S - \big( G_{\Omega, \Lambda}^S \big)^2\Big),
		\end{align*}
		which can be written as in the original statement by Lemma \ref{lemma:square_trace_expression}.
    \end{proof}
    We are now ready to finish the proof.
    \begin{proof}[Proof of Theorem \ref{theorem:measure_eigenvalues_almost_1}]
        Applying Lemma \ref{lemma:eigenvalue_inequality_estimate} to $R\Omega$, we find
        \begin{align*}
            \Bigg| \# \{ k : \lambda_k^{R\Omega} &> 1-\delta \} - |R\Omega \cap \Lambda|\tr(S) \Bigg|\\
            &\leq \max\left\{ \frac{1}{\delta}, \frac{1}{1-\delta} \right\} \left| \sum_{\lambda \in R\Omega \cap \Lambda} \sum_{\lambda' \in R\Omega \cap \Lambda} \tilde{S}(\lambda' - \lambda) - |R\Omega \cap \Lambda|\tr(S)\right|.
        \end{align*}
        The desired conclusion then follows upon dividing by $|R \Omega \cap \Lambda|$ and applying Lemma \ref{lemma:approximate_identity_replacement}.
    \end{proof}
    
    \subsection{A Berezin-Lieb inequality on lattices}\label{sec:berezin_lieb}
    Berezin-Lieb inequalities have been investigated in \cite{Werner1984, Klauder2012, Luef2018_berezin} and recently used in \cite{Dorfler2021} in the continuous setting. As discussed above, in applications we are more interested in lattice formulations which is why we present a version of the Berezin-Lieb inequality for measure-operator convolutions and operator-operator convolutions on lattices below.
    \begin{theorem}
        Let $S \in \mathcal{S}^1$ be positive with $\tr(S) = 1$ and $\Lambda$ be a lattice such that $(S, \Lambda )$ is a mixed-state Gabor frame with upper frame constant $B$. If $T \in \mathcal{S}^1$ is positive and $\Phi$ is a non-negative, convex and continuous function on a domain containing the spectrum of $T$ and the range of $T \star \check{S}$, then
        $$
        \sum_{\lambda \in \Lambda} \Phi \circ (T \star \check{S})(\lambda) \leq B \tr\big(\Phi(T)\big).
        $$
        Similarly, if $\mu_c = \sum_{\lambda \in \Lambda} c(\lambda) \delta_\lambda$ is a positive bounded measure and $\Phi$ a non-negative, convex and continuous function on a domain containing the spectrum of $\mu_c \star S$ and the range of $B c$ restricted to $\Lambda$, then
        $$
        \tr(\Phi(\mu_c \star S)) \leq \sum_{\lambda \in \Lambda} \Phi(B c(\lambda)),
        $$
        where we also have to  assume that $\Phi$ is non-decreasing if $(S, \Lambda)$ is not tight.
    \end{theorem}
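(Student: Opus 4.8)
The plan is to recognise both inequalities as instances of the Berezin--Lieb circle of ideas: the first bounds the \emph{covariant} (lower) symbol, the second the \emph{contravariant} (upper) symbol. Write $\rho_\lambda := \pi(\lambda)S\pi(\lambda)^*$; since conjugation by a unitary preserves the trace and $\tr(S)=1$, each $\rho_\lambda$ is a density operator. Two reformulations set things up. First, $Q_S(\psi)(\lambda) = (\psi\otimes\psi)\star\check S(\lambda) = \langle\rho_\lambda\psi,\psi\rangle$, so the frame condition \eqref{eq:weak_tightness} says exactly that $A\,I \le \sum_{\lambda\in\Lambda}\rho_\lambda \le B\,I$ as positive operators — the partial sums form an increasing net bounded above, hence converge strongly — and $\sum_{\lambda\in\Lambda}\rho_\lambda = B\,I$ in the tight case by Proposition~\ref{prop:approximation_of_identity_lattice}. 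Second, by \eqref{eq:op_op_conv_def}, $T\star\check S(z) = \tr(T\,\pi(z)S\pi(z)^*) = \tr(T\rho_z)$.

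For the first inequality I would first establish the pointwise operator Jensen estimate: for any density operator $\rho$, positive $T\in\mathcal S^1$ and $\Phi$ convex on an interval containing the spectrum of $T$,
\[
\Phi\big(\tr(T\rho)\big) \;\le\; \tr\big(\Phi(T)\rho\big).
\]
This comes from diagonalising $\rho=\sum_j q_j(\eta_j\otimes\eta_j)$ with $\sum_j q_j=1$, applying scalar Jensen to $\tr(T\rho)=\sum_j q_j\langle T\eta_j,\eta_j\rangle$, and then the single-vector inequality $\Phi(\langle T\eta,\eta\rangle)\le\langle\Phi(T)\eta,\eta\rangle$ (Jensen for the spectral measure $d\langle E_T(\cdot)\eta,\eta\rangle$). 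Taking $\rho=\rho_\lambda$ gives $\Phi(T\star\check S(\lambda))\le\tr(\Phi(T)\rho_\lambda)$; then summing over $\lambda$, moving the series of positive operators through the trace, and using $\Phi(T)\ge0$, $\sum_\lambda\rho_\lambda\le B\,I$ and monotonicity of the trace on positive operators produces $\sum_{\lambda}\Phi(T\star\check S(\lambda)) \le \tr(\Phi(T)\sum_\lambda\rho_\lambda) \le B\,\tr(\Phi(T))$.

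For the second inequality I would diagonalise the positive trace-class operator $\mu_c\star S=\sum_k\mu_k\,(h_k\otimes h_k)$ and write, for each eigenvalue,
\[
\mu_k \;=\; \langle(\mu_c\star S)h_k,h_k\rangle \;=\; \sum_{\lambda\in\Lambda} c(\lambda)\,w^{(k)}_\lambda,\qquad w^{(k)}_\lambda := \langle\rho_\lambda h_k,h_k\rangle\ge 0,
\]
with $\sigma_k := \sum_\lambda w^{(k)}_\lambda = \langle(\sum_\lambda\rho_\lambda)h_k,h_k\rangle \le B$, equality holding when $(S,\Lambda)$ is tight. In the tight case the numbers $w^{(k)}_\lambda/B$ are probability weights and $\mu_k=\sum_\lambda(Bc(\lambda))(w^{(k)}_\lambda/B)$, so Jensen bounds $\Phi(\mu_k)$ by a weighted average of the $\Phi(Bc(\lambda))$; summing over $k$ and collapsing the double sum via $\sum_k w^{(k)}_\lambda=\tr(\rho_\lambda)=1$ (using a full orthonormal basis of eigenvectors, where the normalisation $\Phi(0)=0$ implicit in finiteness of $\tr(\Phi(\mu_c\star S))$ handles the kernel directions) gives the claim. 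When only the upper bound $B$ is assumed, $\sigma_k\le B$ but possibly $\sigma_k<B$ (and $\sigma_k>0$ by the lower frame bound); writing $\mu_k=\sigma_k\,\bar c_k$ with $\bar c_k := \sum_\lambda(w^{(k)}_\lambda/\sigma_k)c(\lambda)$ a convex combination of the non-negative numbers $c(\lambda)$, one has $\mu_k\le B\,\bar c_k$, and monotonicity of $\Phi$ lets one pass to $\Phi(\mu_k)\le\Phi(B\bar c_k)$ before applying Jensen as above.

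The main obstacle is twofold. For the first inequality it is the pointwise operator Jensen inequality, together with justifying the interchange of the lattice sum with the trace. For the second it is the non-tight case: the family $\{\rho_\lambda\}_{\lambda\in\Lambda}$ is then only a sub-resolution of the identity, so an eigenvalue $\mu_k$ can fall short of its tight value, and this defect is exactly what forces the extra hypothesis that $\Phi$ be non-decreasing; one must also keep track of the kernel of $\mu_c\star S$ — equivalently, of the value $\Phi(0)$ — when interchanging the sums over eigenvalues and over lattice points.
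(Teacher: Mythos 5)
Your argument is correct and follows essentially the same route as the paper: for the first inequality the paper applies scalar Jensen directly in the eigenbasis of $T$ with weights $Q_S(\xi_n)(\lambda)$ summing to $\tr(S)=1$, which is exactly your operator Jensen estimate $\Phi(\tr(T\rho_\lambda))\le\tr(\Phi(T)\rho_\lambda)$ written out, after which both proofs sum over $\Lambda$ and invoke the upper frame bound. For the second inequality the paper likewise diagonalizes $\mu_c \star S$, rescales the argument by $B/\sigma_k$ using monotonicity of $\Phi$ in the non-tight case, applies Jensen, and collapses the double sum via $\sum_k \langle \rho_\lambda h_k, h_k\rangle = \tr(S) = 1$, precisely as you propose.
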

    \begin{proof}
        Using the singular value decomposition  $T = \sum_n t_n (\xi_n \otimes \xi_n)$, we find that
        \begin{align*}
            T \star \check{S} (\lambda) &= \sum_n t_n \big\langle \check{S} \pi(\lambda)^* \xi_n, \pi(\lambda)^* \xi_n \big\rangle = \sum_n t_n Q_S(\xi_n)(\lambda) \\
            \implies \Phi \circ (T \star \check{S})(\lambda) &\leq \sum_n \Phi(t_n) Q_S(\xi_n)(\lambda),
        \end{align*}
        using Jensen's inequality. Thus by summing over $\Lambda$ we get
        \begin{align*}
            \sum_{\lambda \in \Lambda} \Phi \circ (T \star \check{S})(\lambda) &\leq \sum_{\lambda \in \Lambda} \sum_n \Phi(t_n) Q_S(\xi_n)(\lambda)\\
            &= \sum_n \Phi(t_n) \sum_{\lambda \in \Lambda} Q_S(\xi_n)(\lambda)\\
            &=B \tr(\Phi(T)).
        \end{align*}
        Next for the measure-operator convolution statement, note that by the positivity of $\mu_c$ and $S$, $\mu_c \star S$ is positive via Proposition \ref{prop:measure_op_conv_properties} \ref{item:me_op_pos} and so we can write its singular value decomposition as $\mu_c \star S = \sum_n \lambda_n (\xi_n \otimes \xi_n)$. Hence
        \begin{align*}
            \tr(\Phi(\mu_c \star S)) &= \tr\left(\sum_n \Phi(\lambda_n) (\xi_n \otimes \xi_n)\right) \\
            &=\sum_n \Phi\big(\langle (\mu_c \star S) \xi_n, \xi_n \rangle \big)\\
            &= \sum_n \Phi\left( \sum_{\lambda \in \Lambda} c(\lambda) \langle S \pi(\lambda)^* \xi_n, \pi(\lambda)^* \xi_n \rangle \right),
        \end{align*}
        where we used that $\lambda_n = \langle (\mu_c \star S) \xi_n, \xi_n \rangle$. Next if $(S, \Lambda)$ is not a tight frame, we use that $\Phi$ is non-decreasing to multiply the argument by $\frac{B}{\sum_{\lambda \in \Lambda} Q_{\check{S}}(\xi_n)(\lambda)}$ which yields
        \begin{align*}
            \tr(\Phi(\mu_c \star S)) &\leq \sum_n \Phi \left( \frac{\sum_{\lambda \in \lambda} B c(\lambda) Q_{\check{S}}(\xi_n)(\lambda) }{\sum_{\lambda \in \Lambda} Q_{\check{S}}(\xi_n)(\lambda) } \right)\\
            &\leq \sum_n \sum_{\lambda \in \Lambda} \Phi(B c(\lambda)) Q_{\check{S}}(\xi_n)(\lambda),
        \end{align*}
        where the last step follows by Jensen's inequality. By switching the order of summation and using that $\{ \pi(\lambda)^* \xi_n \}_n$ is an orthonormal basis of $L^2(\R^d)$ by the unitarity of $\pi$, we deduce that the last quantity can be written as $\sum_{\lambda \in \Lambda} \Phi(B c(\lambda))$ which is what we wished to show.
    \end{proof}
    
    \subsection{Convergence of sequences of Gabor multipliers}\label{sec:gabor_seq_conv}
    In this section, we focus our attention on deducing the consequences of Corollary \ref{corollary:measure_weak_implies_s1}. We have already seen that measure-operator convolutions generalize both Gabor multipliers and localization operators and so our goal will be to apply the corollary to these settings. In Sections \ref{sec:preliminaries} and \ref{sec:def_prop_measure_operator_conv}, we have used the BUPU framework for the bounded and tight nets of measures but we will now turn our attention to the more explicit setting of parameterized lattices. More specifically, we will consider rectangular lattices $\Lambda_{\alpha, \beta} = \alpha \Z^d \times \beta \Z^d \lhd \R^{2d}$ parameterized by $\alpha, \beta > 0$. This setting is prevalent in applications and while our results can easily be extended to larger classes of lattices we settle for this setting in the interest of clarity.

    Given such a rectangular parameterized lattice, we can define the discretization in measure form of a function $m : \R^{2d} \to \C$ as
    \begin{align}\label{eq:discretization_of_mask}
        \mu_{\alpha, \beta}^m = \alpha^d \beta^d \sum_{\lambda \in \Lambda_{\alpha, \beta}} m(\lambda) \delta_\lambda.
    \end{align}
    In order to apply Corollary \ref{corollary:measure_weak_implies_s1}, we will need to prove that the above measures are uniformly bounded in $\alpha$ and $\beta$. An important tool for this will be the following lemma.
    \begin{lemma}\label{lemma:riemann_W_implies_M}
        For any $m \in W(L^\infty, \ell^1)(\R^{2d})$, we can choose $\alpha_0, \beta_0$ such that
        \begin{align*}
            \big\Vert \mu_{\alpha, \beta}^m \big\Vert_{M} \leq C(\alpha_0, \beta_0)\Vert m \Vert_{W(L^\infty, \ell^1)}
        \end{align*}
        for all $\alpha < \alpha_0$ and $\beta < \beta_0$.
    \end{lemma}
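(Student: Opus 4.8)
The plan is to bound the total-variation norm $\Vert \mu_{\alpha,\beta}^m \Vert_{M} = \alpha^d\beta^d \sum_{\lambda \in \Lambda_{\alpha,\beta}} |m(\lambda)|$ by grouping the lattice points according to which unit cube contains them and then invoking the characterization of the Wiener amalgam norm through a BUPU. Concretely, I would work with the BUPU $\Psi_0 = \{\chi_{Q_n}\}_{n\in\Z^{2d}}$ of indicator functions of the cubes $Q_n = n + [0,1)^{2d}$ (these satisfy axioms (i)--(iv) of Section \ref{sec:wiener_and_bupu}; the defining functions need not be continuous, cf. the trivial construction mentioned there). Since all BUPUs induce equivalent norms on $W(L^\infty,\ell^1)(\R^{2d})$ by \cite{he03}, there is a constant $C_0 > 0$, independent of $m$, with
\begin{align*}
\sum_{n\in\Z^{2d}} \Vert m\,\chi_{Q_n}\Vert_{L^\infty} \leq C_0\,\Vert m \Vert_{W(L^\infty,\ell^1)}.
\end{align*}

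The heart of the matter is an elementary lattice-point count. Since $Q_n$ is a product of $2d$ half-open unit intervals and $\Lambda_{\alpha,\beta} = \alpha\Z^d \times \beta\Z^d$ is a corresponding product of copies of $\alpha\Z$ and $\beta\Z$, and since $\alpha\Z$ (resp.\ $\beta\Z$) meets a half-open interval of length one in at most $1/\alpha + 1$ (resp.\ $1/\beta + 1$) points, we get $|\Lambda_{\alpha,\beta} \cap Q_n| \leq (1/\alpha+1)^d(1/\beta+1)^d$ for every $n\in\Z^{2d}$. Multiplying by the cell volume yields the uniform bound $\alpha^d\beta^d\,|\Lambda_{\alpha,\beta}\cap Q_n| \leq (1+\alpha)^d(1+\beta)^d$, hence
\begin{align*}
\Vert \mu_{\alpha,\beta}^m \Vert_{M} = \alpha^d\beta^d \sum_{n\in\Z^{2d}} \sum_{\lambda\in\Lambda_{\alpha,\beta}\cap Q_n} |m(\lambda)| \leq (1+\alpha)^d(1+\beta)^d \sum_{n\in\Z^{2d}} \Vert m\,\chi_{Q_n}\Vert_{L^\infty}.
\end{align*}
For $\alpha < \alpha_0$ and $\beta < \beta_0$ the prefactor is at most $(1+\alpha_0)^d(1+\beta_0)^d$, so combining with the norm equivalence gives the assertion with $C(\alpha_0,\beta_0) = C_0\,(1+\alpha_0)^d(1+\beta_0)^d$. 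In fact any $\alpha_0,\beta_0$ work; for the applications one simply takes them small.

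I do not expect a genuine obstacle: the whole point is the counting estimate together with the cancellation of the factor $\alpha^d\beta^d$ built into the normalization \eqref{eq:discretization_of_mask}. The only mild subtlety is that elements of $W(L^\infty,\ell^1)$ are equivalence classes, so the point evaluations $m(\lambda)$ defining $\mu_{\alpha,\beta}^m$ are to be read for a fixed representative; this is harmless, and in the settings where the lemma is used (convergence of masks in $W(C_0,\ell^1)$) the symbol $m$ is continuous and the pointwise bound $|m(\lambda)| \leq \Vert m\,\chi_{Q_n}\Vert_{L^\infty}$ holds literally.
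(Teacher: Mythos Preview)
Your proposal is correct and follows essentially the same approach as the paper: both group lattice points by cubes, bound each point evaluation by the local $L^\infty$ norm, use that $\alpha^d\beta^d\,|\Lambda_{\alpha,\beta}\cap Q|$ is uniformly bounded for $\alpha<\alpha_0,\ \beta<\beta_0$, and sum using the BUPU characterization of the $W(L^\infty,\ell^1)$ norm. The only cosmetic difference is that the paper takes cubes of side $(\alpha_0,\beta_0)$ so the $(\alpha_0,\beta_0)$-dependence enters through the norm-equivalence constant, whereas you fix unit cubes and obtain the dependence explicitly as $(1+\alpha_0)^d(1+\beta_0)^d$ from the lattice-point count.
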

    \begin{proof}
        Each point in $\Lambda_{\alpha, \beta}$ may be associated to a half-open square of the form $Q_{n,k}^{\alpha_0, \beta_0} = \big[\alpha_0 n, \alpha_0 (n+1)\big) \times \big[\beta_0 k, \beta_0(k+1)\big)$ as these cover $\R^{2d}$. We label these collections of points as $P_{n,k} = \Lambda_{\alpha, \beta} \cap Q_{n,k}^{\alpha_0, \beta_0}$. It then holds that
        \begin{align*}
            \Vert \mu_{\alpha, \beta}^m \Vert_M = \alpha^d \beta^d\sum_{\lambda \in \Lambda_{\alpha, \beta}} |m(\lambda)| = \alpha^d \beta^d \sum_{n,k} \sum_{\lambda \in P_{n,k}} |m(\lambda)|.
        \end{align*}
        Meanwhile, by the equivalence of norms on $W(L^\infty, \ell^1)(\R^{2d})$ for different BUPU, we have that $\Vert m \Vert_{W(L^\infty, \ell^1)} \geq C(\alpha_0, \beta_0) \sum_{n,k} \Vert m \Vert_{L^\infty(Q_{n,k}^{\alpha_0, \beta_0})}$ and so our desired result will follow if we can show that
        \begin{align*}
            \alpha^d \beta^d \sum_{\lambda \in P_{n,k}} |m(\lambda)| \leq C\Vert m \Vert_{L^\infty(Q_{n,k}^{\alpha_0, \beta_0})}
        \end{align*}
        for some constant $C$. Indeed, each evaluation $|m(\lambda)|$ for $\lambda \in P_{n,k}$ is bounded from above by $\Vert m \Vert_{L^\infty(Q_{n,k}^{\alpha_0, \beta_0})}$ because $P_{n,k} \subset Q_{n,k}^{\alpha_0, \beta_0}$. Moreover, because $\alpha < \alpha_0, \beta < \beta_0$, the quantity
        $$
        \sum_{\lambda \in P_{n,k}} \alpha^d\beta^d = \alpha^d \beta^d\big|\Lambda_{\alpha, \beta} \cap Q_{n,k}^{\alpha_0, \beta_0}\big|
        $$
        is uniformly bounded, which finishes the proof.
    \end{proof}
    
    \subsubsection{Approximating localization operators}
  As a first application, we will show that the discretization \eqref{eq:discretization_of_mask} essentially approaches the original mask for our purposes, when the lattice scaling parameters tends to zero.
    \begin{theorem}\label{theorem:lattice_integral_convergence_riemann}
        Let $m : \R^{2d} \to \C$ be a Riemann-integrable function in $W(L^\infty, \ell^1)(\R^{2d})$ and $S$ a trace-class operator. Then we have the convergence
        \begin{align*}
            \big\Vert \mu_{\alpha, \beta}^m \star S - m \star S \big\Vert_{\mathcal{S}^1} \to 0 \quad \text{ as }\alpha, \beta \to 0.
        \end{align*}
    \end{theorem}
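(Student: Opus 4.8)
The plan is to exhibit the family $\bigl(\mu_{\alpha,\beta}^m\bigr)$ as a bounded and tight net in $M(\R^{2d})$ whose $w^*$-limit, as $\alpha,\beta\to 0$, is the absolutely continuous measure $m\,dz$, and then to quote Corollary~\ref{corollary:measure_weak_implies_s1} directly. Since $W(L^\infty,\ell^1)(\R^{2d})\subset L^1(\R^{2d})$, the measure $m\,dz$ is a genuine element of $M(\R^{2d})$, and by the identification of the $L^1$-module action with the function–operator convolution (Corollary~\ref{corollary:uniqueness} together with the weak formula of Theorem~\ref{thm:weak_def_of_measure_operator_convolution}, which reproduces \eqref{eq:main_func_op_def}) one has $(m\,dz)\star S=m\star S$. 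Thus Corollary~\ref{corollary:measure_weak_implies_s1} yields precisely $\Vert\mu_{\alpha,\beta}^m\star S-m\star S\Vert_{\mathcal{S}^1}\to 0$. Throughout I would restrict the net to the directed index set $\{(\alpha,\beta):0<\alpha<\alpha_0,\,0<\beta<\beta_0\}$ (ordered by reverse magnitude), which is harmless since only the behaviour near $(0,0)$ matters.

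Boundedness is immediate from Lemma~\ref{lemma:riemann_W_implies_M}: there are $\alpha_0,\beta_0$ with $\Vert\mu_{\alpha,\beta}^m\Vert_{M}\le C(\alpha_0,\beta_0)\Vert m\Vert_{W(L^\infty,\ell^1)}$ for all $\alpha<\alpha_0$, $\beta<\beta_0$, so the net is uniformly bounded on the chosen index set. For tightness, fix $\varepsilon>0$, choose $k\in C_c(\R^{2d})$ with $0\le k\le 1$ and $k\equiv 1$ on a large cube $K$, and observe that $\mu_{\alpha,\beta}^m-k\cdot\mu_{\alpha,\beta}^m=\mu_{\alpha,\beta}^{m(1-k)}$; applying Lemma~\ref{lemma:riemann_W_implies_M} to the function $m(1-k)$, which is supported off $K$, gives $\Vert\mu_{\alpha,\beta}^m-k\cdot\mu_{\alpha,\beta}^m\Vert_{M}\le C(\alpha_0,\beta_0)\Vert m(1-k)\Vert_{W(L^\infty,\ell^1)}$, and the last norm can be made $<\varepsilon/C(\alpha_0,\beta_0)$ by taking $K$ large, since $m\in W(L^\infty,\ell^1)$. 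This bound is uniform in $\alpha<\alpha_0$, $\beta<\beta_0$, which is what tightness requires.

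The substantive point is the $w^*$-convergence: for every $f\in C_0(\R^{2d})$ one must show
\[
\alpha^d\beta^d\sum_{\lambda\in\Lambda_{\alpha,\beta}}m(\lambda)f(\lambda)\;\longrightarrow\;\int_{\R^{2d}}m(z)f(z)\,dz\qquad\text{as }\alpha,\beta\to 0.
\]
Here I would split $\R^{2d}=K\sqcup K^c$ for a large cube $K$ adapted to $\Lambda_{\alpha,\beta}$-fundamental cubes. On $K$, the partial sum over $\Lambda_{\alpha,\beta}\cap K$ is a Riemann sum (with mesh $\to 0$) of the bounded function $mf$, which is Riemann-integrable on $K$ because $m$ is Riemann-integrable and $f$ is continuous; hence it converges to $\int_K mf$. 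On $K^c$, both the integral tail $\int_{K^c}|mf|$ and the discrete tail $\alpha^d\beta^d\sum_{\lambda\in\Lambda_{\alpha,\beta}\setminus K}|m(\lambda)f(\lambda)|$ are controlled by $\Vert f\Vert_{\infty}$ times the amalgam tail of $m$ outside a slightly smaller cube (using $mf\in W(L^\infty,\ell^1)$, hence $mf\in L^1$, and Lemma~\ref{lemma:riemann_W_implies_M}), so both become arbitrarily small uniformly in $\alpha,\beta$ as $K$ grows. Combining the two halves identifies $m\,dz$ as the $w^*$-limit. The main obstacle is precisely this simultaneous handling of the non-compact tails and the Riemann-sum convergence on compacta; the Riemann-integrability hypothesis on $m$ and its membership in $W(L^\infty,\ell^1)$ are exactly what make the two estimates go through. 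Feeding boundedness, tightness, and the identified $w^*$-limit into Corollary~\ref{corollary:measure_weak_implies_s1} completes the argument.
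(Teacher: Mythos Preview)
Your proposal is correct and follows essentially the same strategy as the paper: verify boundedness (via Lemma~\ref{lemma:riemann_W_implies_M}), tightness, and $w^*$-convergence of $(\mu_{\alpha,\beta}^m)$ to $m\,dz$, then invoke Corollary~\ref{corollary:measure_weak_implies_s1}. The only organizational difference is that the paper first reduces to compactly supported $m$ by a triangle-inequality argument (making tightness trivial and the Riemann-sum step clean), whereas you prove tightness directly from the amalgam condition and handle the tail inside the $w^*$-convergence step; both routes use the same ingredients.
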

    \begin{proof}
        As a first step we will show that $m$ may be taken to have compact support without loss of generality. Fix $\varepsilon > 0$ and let $K \subset \R^{2d}$ be such that $\Vert m \chi_{K^c}\Vert_{W(L^\infty, \ell^1)} < \varepsilon$. We may then estimate
        \begin{align*}
            \big\Vert \mu_{\alpha, \beta}^m \star S - m \star S \big\Vert_{\mathcal{S}^1} &\leq \big\Vert \mu_{\alpha, \beta}^m \star S - \mu_{\alpha, \beta}^{m \chi_K} \star S \big\Vert_{\mathcal{S}^1} + \big\Vert \mu_{\alpha, \beta}^{m \chi_K} \star S - m \star S \big\Vert_{\mathcal{S}^1}\\
            &\leq \big\Vert \mu_{\alpha, \beta}^{m \chi_{K^c} }\big\Vert_{M} \Vert S \Vert_{\mathcal{S}^1} +  \big\Vert \mu_{\alpha, \beta}^{m \chi_K} \star S - m \star S \big\Vert_{\mathcal{S}^1}\\
            &\leq C\varepsilon\Vert S \Vert_{\mathcal{S}^1}  +  \big\Vert \mu_{\alpha, \beta}^{m \chi_K} \star S - m\chi_K \star S \big\Vert_{\mathcal{S}^1} + \big\Vert m \chi_K \star S - m \star S \big\Vert_{\mathcal{S}^1}\\
            &\leq C\varepsilon\Vert S \Vert_{\mathcal{S}^1} + \big\Vert \mu_{\alpha, \beta}^{m \chi_K} \star S - m\chi_K \star S \big\Vert_{\mathcal{S}^1} + \Vert m\chi_{K^c} \Vert_M \Vert S \Vert_{\mathcal{S}^1} \\
            &\leq 2C\varepsilon\Vert S \Vert_{\mathcal{S}^1} + \big\Vert \mu_{\alpha, \beta}^{m \chi_K} \star S - m\chi_K \star S \big\Vert_{\mathcal{S}^1},
        \end{align*}
        where we used Lemma \ref{lemma:riemann_W_implies_M} and the estimate of Proposition \ref{prop:interpolated_mapping_meas_op} twice. Since $\varepsilon$ was arbitrary, we may now replace $m \chi_K$ by $m$ and assume that $m$ has compact support.
        
        To apply Corollary \ref{corollary:measure_weak_implies_s1} to $\Vert \mu_{\alpha, \beta}^m \star S - m \star S \Vert_{\mathcal{S}^1}$ and get the desired convergence, we need to show that $\mu_{\alpha, \beta}^m \to m$ in the weak-* sense and that the sequence $\big( \mu_{\alpha, \beta}^m \big)_{\alpha, \beta}$ is bounded and tight. Boundedness follows directly from Lemma \ref{lemma:riemann_W_implies_M} while tightness follows from the compactness of the support of $m$ established above.
        
        The desired weak-* convergence can be formulated as that for any $f \in C_b(\R^{2d})$,
        \begin{align*}
            \mu_{\alpha, \beta}^m (f) \to \int_{\supp(m)} m(z) f(z) \,dz\qquad \text{ as }\alpha, \beta \to 0.
        \end{align*}
        We claim that the left-hand side can be realized as a Riemann sum approximating the right hand side. Indeed, from the definition of $\mu_{\alpha, \beta}^m$ we have that
        \begin{align}\label{eq:mu_alpha_applied_to_f_riemann}
            \mu_{\alpha, \beta}^m(f) = \sum_{\lambda \in \Lambda_{\alpha, \beta}} m(\lambda)f(\lambda) \alpha^d \beta^d,
        \end{align}
        and this sum goes over the rectangles $\big[n\alpha, (n+1)\alpha\big) \times \big[ k\beta, (k+1)\beta\big)$ which have area $\alpha^d \beta^d$. Now since $m$ is Riemann integrable and $f$ is continuous and bounded and therefore uniformly bounded on the support of $m$, $m \cdot f$ is also Riemann integrable and \eqref{eq:mu_alpha_applied_to_f_riemann} converges to $\int_{\supp(m)} m(z)f(z)\,dz$ as promised.
    \end{proof}
    As discussed in the preliminaries section, $m \star S$ is a mixed-state localization operator. Moreover, convolving the discretized version of $m$ with $S$ essentially gives a mixed-state Gabor multiplier. To see this, we define Gabor multipliers $G_{m, \alpha, \beta}^\varphi$ based on the lattice parameters as
    \begin{align}\label{eq:gabor_multiplier_def}
        G_{m, \alpha, \beta}^\varphi\psi = |\Lambda_{\alpha, \beta}|\sum_{n, k \in \mathbb{Z}} m(n\alpha, k\beta) \langle \psi, \pi(n\alpha, k\beta)\varphi \rangle \pi(n\alpha, k\beta)\varphi = \mu_{\alpha, \beta}^m \star (\varphi \otimes \varphi)(\psi).
    \end{align}
    Note that this definition differs from \eqref{eq:mixed_state_gabor_multiplier_def} used earlier, even when the convolution is with an arbitrary trace-class operator. The reason is that we incorporate the lattice normalization as we will take the mask to be constant and the lattice parameters to be varying.
    
    By taking $m$ to be Riemann integrable and in $W(L^\infty, \ell^1)(\R^{2d})$, the rank-one case of Theorem \ref{theorem:lattice_integral_convergence_riemann} allows us to use the definition \eqref{eq:gabor_multiplier_def} and deduce that
    \begin{align*}
        \big\Vert G_{m, \alpha, \beta}^\varphi - A_m^\varphi \big\Vert_{\mathcal{S}^1} \to 0\qquad \text{ as }\alpha, \beta \to 0,
    \end{align*}
    since the localization operator $A_m^\varphi$, from \eqref{eq:def_loc_op}, can be written as $A_m^\varphi = m \star (\varphi \otimes \varphi)$. Even more explicitly, we can formulate the following corollary when the mask is simply an indicator function.
    \begin{corollary}
        Let $\Omega \subset \R^{2d}$ a compact Jordan measurable subset, $\varphi \in L^2(\R^d)$ and $G_{m, \alpha, \beta}^\varphi$ as in \eqref{eq:gabor_multiplier_def}. Then
        \begin{align*}
            \big\Vert G_{\Omega, \alpha, \beta}^\varphi - A_\Omega^\varphi \big\Vert_{\mathcal{S}^1} \to 0\qquad \text{ as }\alpha, \beta \to 0.
        \end{align*}
    \end{corollary}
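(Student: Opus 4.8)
The plan is to recognize this statement as the rank-one, indicator-mask specialization of Theorem \ref{theorem:lattice_integral_convergence_riemann}. Setting $m = \chi_\Omega$ and $S = \varphi \otimes \varphi$, the localization operator is $A_\Omega^\varphi = \chi_\Omega \star (\varphi \otimes \varphi)$ by the identity $A_m^\varphi = m \star (\varphi \otimes \varphi)$ recalled in Section \ref{sec:mixed_state_loc}, while the Gabor multiplier is $G_{\Omega, \alpha, \beta}^\varphi = \mu_{\alpha, \beta}^{\chi_\Omega} \star (\varphi \otimes \varphi)$ directly from the definition \eqref{eq:gabor_multiplier_def}. Hence the assertion to be proved is exactly $\big\Vert \mu_{\alpha, \beta}^{\chi_\Omega} \star S - \chi_\Omega \star S \big\Vert_{\mathcal{S}^1} \to 0$ as $\alpha, \beta \to 0$, which is the conclusion of Theorem \ref{theorem:lattice_integral_convergence_riemann} once its three hypotheses are checked for this choice of $m$ and $S$.

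First I would note that $S = \varphi \otimes \varphi$ is a rank-one operator with $\tr(\varphi \otimes \varphi) = \Vert \varphi \Vert^2 < \infty$, so $S \in \mathcal{S}^1$. Next, $\chi_\Omega \in W(L^\infty, \ell^1)(\R^{2d})$: since $\Omega$ is compact, for any fixed BUPU $\{\psi_i\}_{i \in I}$ only finitely many members have support meeting $\Omega$ (by boundedness of $\Omega$ together with the local finiteness built into properties (iii)--(iv) of a BUPU), and for each such $i$ one has $\Vert \chi_\Omega \psi_i \Vert_{L^\infty} \leq \Vert \psi_i \Vert_{L^\infty} \leq \sup_{j} \Vert \psi_j \Vert_{L^\infty} < \infty$, so $\sum_{i \in I} \Vert \chi_\Omega \psi_i \Vert_{L^\infty}$ is a finite sum of finite terms.

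The remaining point is that $\chi_\Omega$ is Riemann integrable, and this is precisely where Jordan measurability of $\Omega$ is used: a bounded, compactly supported function is Riemann integrable if and only if its set of discontinuities is Lebesgue-null, and the discontinuity set of $\chi_\Omega$ is contained in the topological boundary $\partial \Omega$, which has Lebesgue measure zero exactly because $\Omega$ is Jordan measurable. With all hypotheses of Theorem \ref{theorem:lattice_integral_convergence_riemann} in place, the desired trace-norm convergence follows at once.

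I do not expect a genuine obstacle here; the content is essentially bookkeeping. The only mild subtleties are (a) recalling that Jordan measurability is the right hypothesis to force Riemann integrability of the indicator, and (b) keeping track of the normalization conventions so that $G_{\Omega, \alpha, \beta}^\varphi$ from \eqref{eq:gabor_multiplier_def} and $A_\Omega^\varphi$ from \eqref{eq:def_loc_op} align with $\mu_{\alpha,\beta}^{\chi_\Omega} \star S$ and $\chi_\Omega \star S$ respectively, so that Theorem \ref{theorem:lattice_integral_convergence_riemann} can be invoked verbatim.
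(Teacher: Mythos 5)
Your proposal is correct and matches the paper's argument: the paper likewise derives the corollary by specializing Theorem \ref{theorem:lattice_integral_convergence_riemann} to $m = \chi_\Omega$ and $S = \varphi \otimes \varphi$, noting that Jordan measurability of $\Omega$ is equivalent to Riemann integrability of $\chi_\Omega$ and that compact support gives $\chi_\Omega \in W(L^\infty, \ell^1)(\R^{2d})$. Your additional verifications (trace-class of the rank-one operator, the BUPU finiteness argument) are just spelled-out versions of what the paper leaves implicit.
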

    The proof is essentially immediate upon noting that Jordan measurability of $\Omega$ is equivalent to Riemann integrability of $\chi_\Omega$ and that compact support implies $\chi_\Omega \in W(L^\infty, \ell^1)(\R^{2d})$.

    These results should be contrasted with those \cite[Section 5.9]{Feichtinger2003} where localization operators are approximated by Gabor multipliers in the trace-class norm in a similar way. The largest difference is that our result is valid for arbitrary $L^2$ windows (and of course also operator windows).

    One consequence of this result is that (mixed-state) localization operators are dense in the space of localization operators. In \cite{Bayer2015}, it was shown that localization operators with varying masks are dense in $\mathcal{S}^1$ as long as $V_\varphi \varphi \neq 0$. The generalization of this to mixed-state localization operators was established in \cite{Kiukas2012, Luef2018} where the condition was replaced by $\mathcal{F}_W(S) \neq 0$. Combining these results, we can approximate arbitrary $\mathcal{S}^1$ operators as follows.
    \begin{corollary}
    For any trace-class operator $S$ with $\mathcal{F}_W(S)$ free of zeros, the set
        $$
        \left\{ \mu_{\alpha, \beta}^m \star S : \alpha, \beta > 0, m \in L^1(\R^{2d})\right\}
        $$
        is dense in $\mathcal{S}^1$.
    \end{corollary}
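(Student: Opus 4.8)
The plan is to chain together two approximation results already available: the known density of mixed-state localization operators in $\mathcal{S}^1$ (from \cite{Kiukas2012, Luef2018}, which applies precisely because $\mathcal{F}_W(S)$ has no zeros) and the discretization convergence of Theorem \ref{theorem:lattice_integral_convergence_riemann}. Fix an arbitrary target $T \in \mathcal{S}^1$ and $\varepsilon > 0$; the goal is to produce parameters $\alpha, \beta > 0$ and a mask $m \in L^1(\R^{2d})$ with $\Vert \mu_{\alpha, \beta}^m \star S - T \Vert_{\mathcal{S}^1} < \varepsilon$. First I would use \cite{Kiukas2012, Luef2018} to find $m_0 \in L^1(\R^{2d})$ with $\Vert m_0 \star S - T \Vert_{\mathcal{S}^1} < \varepsilon/3$.

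The bridging step is to replace $m_0$ by a function lying in the intersection of all the regularity classes that Theorem \ref{theorem:lattice_integral_convergence_riemann} requires. Since $C_c(\R^{2d})$ is dense in $L^1(\R^{2d})$, I would pick $m_1 \in C_c(\R^{2d})$ with $\Vert m_1 - m_0 \Vert_{L^1} < \varepsilon / \big(3 \Vert S \Vert_{\mathcal{S}^1}\big)$ (handling the trivial case $S = 0$ separately). Applying the estimate of Proposition \ref{prop:interpolated_mapping_meas_op} to the absolutely continuous measure $m_1 - m_0$ gives
$$
\Vert m_1 \star S - m_0 \star S \Vert_{\mathcal{S}^1} \leq \Vert m_1 - m_0 \Vert_{L^1} \Vert S \Vert_{\mathcal{S}^1} < \varepsilon/3.
$$
A continuous compactly supported function is Riemann integrable and, being bounded with compact support, belongs to $W(L^\infty, \ell^1)(\R^{2d})$, so $m_1$ satisfies the hypotheses of Theorem \ref{theorem:lattice_integral_convergence_riemann}.

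Finally I would invoke that theorem for $m_1$: there exist $\alpha, \beta > 0$ small enough that $\Vert \mu_{\alpha, \beta}^{m_1} \star S - m_1 \star S \Vert_{\mathcal{S}^1} < \varepsilon/3$. Combining the three estimates by the triangle inequality yields $\Vert \mu_{\alpha, \beta}^{m_1} \star S - T \Vert_{\mathcal{S}^1} < \varepsilon$, and since $m_1 \in L^1(\R^{2d})$ the operator $\mu_{\alpha, \beta}^{m_1} \star S$ belongs to the set in the statement, so that set is dense in $\mathcal{S}^1$.

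There is no serious obstacle here; the only point demanding care is the choice of the intermediate mask $m_1$, which must simultaneously be an honest $L^1$ function (so that it is an admissible mask and $m_1 \star S$ is a genuine mixed-state localization operator), Riemann integrable, and an element of $W(L^\infty, \ell^1)(\R^{2d})$, since Theorem \ref{theorem:lattice_integral_convergence_riemann} uses all three properties at once — functions in $C_c(\R^{2d})$ meet every requirement, which is why they are the natural class to pass through.
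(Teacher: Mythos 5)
Your proof is correct and follows essentially the same route as the paper: cite the density of mixed-state localization operators (valid since $\mathcal{F}_W(S)$ has no zeros), approximate the mask by one admissible for Theorem \ref{theorem:lattice_integral_convergence_riemann}, and conclude by the triangle inequality. If anything, your choice to pass through $C_c(\R^{2d})$ is slightly more careful than the paper's appeal to density of $W(L^\infty,\ell^1)(\R^{2d})$ in $L^1(\R^{2d})$, since it also secures the Riemann integrability hypothesis explicitly.
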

    \begin{proof}
        The result follows from \cite[Theorem 7.6]{Luef2018} and Theorem \ref{theorem:lattice_integral_convergence_riemann} upon noting that $W(L^\infty, \ell^1)(\R^{2d})$ is dense in $L^1(\R^{2d})$.
    \end{proof}

    \subsubsection{Mask, lattice and window continuity}
    Next, we turn our attention to investigating how the mixed-state Gabor multipliers $\mu_{\alpha,\beta}^m \star S$ behave when varying the parameters. Results of this nature have been proved in \cite{fe03-5, Feichtinger2003} but we will be able to prove stronger convergence under stricter conditions on the mask but weaker conditions on the window.
    \begin{theorem}
        Let $(\alpha_n)_n$ and $(\beta_n)_n$ be sequences converging to $\alpha, \beta > 0$ respectively, $(m_n)_n$ a sequence in the Wiener amalgam space $W(C_0, \ell^1)(\R^{d})$ converging to $m$ in the corresponding norm and $(S_n)_n$ a sequence of trace-class operators converging to $S$ in $\mathcal{S}^1$. Then for the associated mixed-state Gabor multipliers we have the convergence
        \begin{align*}
            G_{m_n, \alpha_n, \beta_n}^{S_n} \to G_{m, \alpha, \beta}^{S} \quad \text{ in }\mathcal{S}^1 \text{ as }n \to \infty.
        \end{align*}
    \end{theorem}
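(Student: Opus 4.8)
The plan is to reduce the three-parameter convergence to a single application of Corollary~\ref{corollary:measure_weak_implies_s1} by splitting the difference $G_{m_n,\alpha_n,\beta_n}^{S_n}-G_{m,\alpha,\beta}^{S}$ into a term where only the operator window varies and a term where only the mask/lattice data vary. Concretely, write
\begin{align*}
\big\Vert G_{m_n,\alpha_n,\beta_n}^{S_n}-G_{m,\alpha,\beta}^{S}\big\Vert_{\mathcal{S}^1}
&\leq \big\Vert \mu_{\alpha_n,\beta_n}^{m_n}\star(S_n-S)\big\Vert_{\mathcal{S}^1}
+\big\Vert \big(\mu_{\alpha_n,\beta_n}^{m_n}-\mu_{\alpha,\beta}^{m}\big)\star S\big\Vert_{\mathcal{S}^1}.
\end{align*}
For the first term I would invoke Proposition~\ref{prop:interpolated_mapping_meas_op} to bound it by $\big\Vert\mu_{\alpha_n,\beta_n}^{m_n}\big\Vert_M\,\Vert S_n-S\Vert_{\mathcal{S}^1}$, so it suffices to show $\sup_n\big\Vert\mu_{\alpha_n,\beta_n}^{m_n}\big\Vert_M<\infty$; then $\Vert S_n-S\Vert_{\mathcal{S}^1}\to0$ finishes it. The uniform bound on the measure norms is exactly what Lemma~\ref{lemma:riemann_W_implies_M} provides, once one notes $W(C_0,\ell^1)\hookrightarrow W(L^\infty,\ell^1)$ and that $(m_n)$ is norm-convergent hence bounded in $W(C_0,\ell^1)$; since $\alpha_n\to\alpha>0$ and $\beta_n\to\beta>0$, eventually $\alpha_n<\alpha_0$ and $\beta_n<\beta_0$ for the thresholds furnished by the lemma, so $\big\Vert\mu_{\alpha_n,\beta_n}^{m_n}\big\Vert_M\leq C(\alpha_0,\beta_0)\sup_n\Vert m_n\Vert_{W(L^\infty,\ell^1)}<\infty$.

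For the second term the strategy is to apply Corollary~\ref{corollary:measure_weak_implies_s1} with $\mu_0=\mu_{\alpha,\beta}^{m}$ and the net (sequence) $\mu_n:=\mu_{\alpha_n,\beta_n}^{m_n}$. Three things must be checked: (i) boundedness in $M(\R^{2d})$, which was just established; (ii) tightness; (iii) $w^*$-convergence $\mu_n\to\mu_{\alpha,\beta}^{m}$. For tightness one uses that the $W(C_0,\ell^1)$-convergence of $(m_n)$ forces the tails to be uniformly small: given $\varepsilon>0$ choose a large box $K$ so that $\sup_n\Vert m_n\chi_{K^c}\Vert_{W(L^\infty,\ell^1)}<\varepsilon$ (possible because a convergent sequence has equi-small tails in this amalgam norm), and then Lemma~\ref{lemma:riemann_W_implies_M} applied to $m_n\chi_{K^c}$ gives $\Vert\mu_n-\chi_K\cdot\mu_n\Vert_M\leq C\varepsilon$ uniformly in $n$; choosing a cutoff $k\in C_c(\R^{2d})$ with $k\equiv1$ on $K$ yields the tightness condition. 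For the $w^*$-convergence I would test against an arbitrary $f\in C_0(\R^{2d})$ and write $\mu_n(f)-\mu_{\alpha,\beta}^m(f)$ as a difference of Riemann-type sums; it splits into $\big(\mu_{\alpha_n,\beta_n}^{m_n}-\mu_{\alpha_n,\beta_n}^{m}\big)(f)$, bounded by $\Vert\mu_{\alpha_n,\beta_n}^{m_n-m}\Vert_M\Vert f\Vert_{L^\infty}\leq C\Vert m_n-m\Vert_{W(L^\infty,\ell^1)}\Vert f\Vert_{L^\infty}\to0$, plus $\big(\mu_{\alpha_n,\beta_n}^{m}-\mu_{\alpha,\beta}^{m}\big)(f)$, which is a difference of Riemann sums for $\int mf$ over shrinking-then-fixed grids and tends to $0$ by uniform continuity of $f$ and dominated convergence (using the uniform tail control to handle the non-compact domain, noting $mf\in L^1$ since $m\in W(C_0,\ell^1)\subset L^1$ and $f$ is bounded).

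The main obstacle I anticipate is item (iii), the $w^*$-convergence of $\mu_{\alpha_n,\beta_n}^{m}$ to $\mu_{\alpha,\beta}^{m}$ when the lattice \emph{spacing} itself moves rather than merely refining: the sampling points $n\alpha_n$ do not nest inside the limiting grid $n\alpha$, so this is not a standard Riemann-sum refinement and one must argue continuity of the map $(\alpha,\beta)\mapsto\mu_{\alpha,\beta}^m$ into $(M(\R^{2d}),w^*)$ directly. The cleanest route is to observe that for fixed continuous $f$ of compact support the finite sums $\alpha_n^d\beta_n^d\sum m(n\alpha_n,k\beta_n)f(n\alpha_n,k\beta_n)$ converge to $\int m f$ because they are Riemann sums for the \emph{fixed} integrable function $mf$ whose mesh width $\sqrt{d}\max(\alpha_n,\beta_n)$ is bounded (Riemann integrability of $m$ is needed here, which is why the hypothesis should be read as including it, or one reduces to continuous $m$ by density in $W(C_0,\ell^1)$), and then pass from $C_c$ to $C_0$ using the uniform tightness. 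Once (i)--(iii) are in hand, Corollary~\ref{corollary:measure_weak_implies_s1} gives $\big\Vert\big(\mu_{\alpha_n,\beta_n}^{m_n}-\mu_{\alpha,\beta}^{m}\big)\star S\big\Vert_{\mathcal{S}^1}\to0$, and combining the two estimates completes the proof.
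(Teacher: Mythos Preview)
Your overall architecture matches the paper's proof almost exactly: the same triangle-inequality split into an ``operator-window'' term controlled by Proposition~\ref{prop:interpolated_mapping_meas_op} together with the uniform measure bound from Lemma~\ref{lemma:riemann_W_implies_M}, and a ``mask/lattice'' term handled by Corollary~\ref{corollary:measure_weak_implies_s1} after verifying boundedness, tightness, and $w^*$-convergence. The boundedness and tightness arguments you give are essentially the paper's.

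There is, however, a genuine gap in your treatment of the $w^*$-convergence, precisely at the point you yourself flag as the main obstacle. You propose to show $\mu_{\alpha_n,\beta_n}^{m}(f)\to\mu_{\alpha,\beta}^{m}(f)$ by arguing that the left-hand side is a Riemann sum converging to $\int mf$. But the \emph{target} $\mu_{\alpha,\beta}^{m}(f)=\alpha^d\beta^d\sum_{k,j} m(k\alpha,j\beta)f(k\alpha,j\beta)$ is a discrete sum on a lattice of fixed positive spacing and is in general \emph{not} equal to $\int mf$; there is no reason for a Riemann sum with mesh bounded away from zero to approximate the integral. So the argument, as written, proves convergence to the wrong limit.

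The paper's resolution is much more elementary and avoids Riemann sums altogether. After replacing $m$ by a compactly supported function (via the same cut-off trick you use for tightness), one observes that because $\alpha_n\to\alpha>0$ and $\beta_n\to\beta>0$, only finitely many indices $(k,j)$ contribute to either sum, uniformly in $n$. One then estimates
\[
\big|\mu_{\alpha_n,\beta_n}^{m}(f)-\mu_{\alpha,\beta}^{m}(f)\big|
\leq \sum_{k,j}\big|\,m(k\alpha_n,j\beta_n)f(k\alpha_n,j\beta_n)\alpha_n^d\beta_n^d - m(k\alpha,j\beta)f(k\alpha,j\beta)\alpha^d\beta^d\,\big|,
\]
a finite sum each term of which tends to zero by continuity of $m$ (this is where $m\in W(C_0,\ell^1)$ rather than $W(L^\infty,\ell^1)$ is used) and of $f$. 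Replacing your Riemann-sum paragraph with this termwise-continuity argument closes the gap.
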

    \begin{proof}
        To begin, we will need to prove that $\big( \mu_{\alpha_n, \beta_n}^{m_n} \big)_n$ is a tight and bounded sequence of measures. Boundedness follows from Lemma \ref{lemma:riemann_W_implies_M} as functions in $W(C_0, \ell^1)(\R^{2d})$ clearly are Riemann integrable and in $W(L^\infty, \ell^1)(\R^{2d})$ and we can bound $\Vert m_n \Vert_{W(L^\infty, \ell^1)}$ from above uniformly since $(m_n)_n$ converges in this space. For tightness, we again use Lemma \ref{lemma:riemann_W_implies_M} to relate the $M(\R^{2d})$ and $W(C_0, \ell^1)(\R^{2d})$ norms and use that for any set $K$, the sequence $\big(m_n\big|_{K^c}\big)_n$ converges in $W(C_0, \ell^1)(\R^{2d})$ which can be made arbitrarily small by letting $K$ be large due to the finiteness of $\Vert m \Vert_{W(C_0, \ell^1)}$. 

        With verification out of the way, we next show that we can treat the convergence of $(S_n)_n$ separately. Indeed, by the triangle inequality
        \begin{align}\label{eq:triangle_gabor_multiplier_window}
            \big\Vert G_{m_n, \alpha_n, \beta_n}^{S_n} - G_{m, \alpha, \beta}^{S} \big\Vert_{\mathcal{S}^1} \leq \big\Vert G_{m_n, \alpha_n, \beta_n}^{S_n} - G_{m_n, \alpha_n, \beta_n}^S \big\Vert_{\mathcal{S}^1} + \big\Vert G_{m_n, \alpha_n, \beta_n}^S - G_{m, \alpha, \beta}^S \big\Vert_{\mathcal{S}^1}.
        \end{align}
        Expanding the first term as $\big\Vert \mu_{\alpha_n, \beta_n}^{m_n} \star (S_n - S) \big\Vert_{\mathcal{S}^1}$ and applying the estimate of Corollary \ref{corollary:essential}, we see that this term can be made arbitrarily small uniformly by the boundedness of $\big( \mu_{\alpha_n, \beta_n}^{m_n} \big)_n$. We may hence focus our attention on showing convergence of the last term of \eqref{eq:triangle_gabor_multiplier_window}. As is hopefully expected, this is done by applying Corollary \ref{corollary:measure_weak_implies_s1} to
        \begin{align*}
            \big\Vert G_{m_n, \alpha_n, \beta_n}^S - G_{m, \alpha, \beta}^S \big\Vert_{\mathcal{S}^1} = \big\Vert \mu_{\alpha_n, \beta_n}^{m_n} \star S - \mu_{\alpha, \beta}^m \star S \big\Vert_{\mathcal{S}^1}.
        \end{align*}
        Since we have already shown that $\big( \mu_{\alpha_n, \beta_n}^{m_n} \big)_n$ is a tight and bounded sequence, it only remains to show that for all $f \in C_b(\R^{2d})$,
        \begin{align*}
            \mu_{\alpha_n, \beta_n}^{m_n}(f) \to \mu_{\alpha, \beta}^m(f)\qquad \text{ as }n \to \infty
        \end{align*}
        We treat $m$ first by estimating
        \begin{equation}\label{eq:deep_m_cont_estimate}
            \begin{aligned}
                \big| \mu_{\alpha_n, \beta_n}^{m_n}(f) - \mu_{\alpha, \beta}^m(f) \big| &\leq \big| \mu_{\alpha_n, \beta_n}^{m_n}(f) - \mu_{\alpha_n, \beta_n}^m(f) \big| + \big|\mu_{\alpha_n, \beta_n}^m(f) - \mu_{\alpha, \beta}^m(f) \big|\\
                &\leq \big| \mu_{\alpha_n, \beta_n}^{m_n - m}(f) \big| + \big|\mu_{\alpha_n, \beta_n}^m(f) - \mu_{\alpha, \beta}^m(f) \big|\\
                &\leq C(2\alpha, 2\beta)\Vert m - m_n \Vert_{W(C_0, \ell^1)} \Vert f \Vert_{C_b} + \big|\mu_{\alpha_n, \beta_n}^m(f) - \mu_{\alpha, \beta}^m(f) \big|,
            \end{aligned}
        \end{equation}
        where we for the last step used Lemma \ref{lemma:riemann_W_implies_M} with the fact that for large enough $n$, $\alpha_n \leq 2 \alpha$ and $\beta_n \leq 2 \beta$ since $\alpha_n \to \alpha$ and $\beta_n \to \beta$ as $n \to \infty$. As the first term can be made arbitrarily small by considering sufficiently large $n$, we may restrict our attention to the second term. Here we must exchange $m$ for a function with compact support. This is done by first estimating that $\big|\mu_{\alpha_n, \beta_n}^m(f) - \mu_{\alpha, \beta}^m(f) \big|$ is less than 
        \begin{align}\label{eq:gabor_cont_triangle_compact_supp_mask}
             \big|\mu_{\alpha_n, \beta_n}^m(f) - \mu_{\alpha_n, \beta_n}^{m \chi_K}(f) \big| + \big|\mu_{\alpha_n, \beta_n}^{m \chi_K}(f) - \mu_{\alpha, \beta}^{m \chi_K}(f) \big| + \big|\mu_{\alpha, \beta}^{m \chi_K}(f) - \mu_{\alpha, \beta}^m(f) \big|
        \end{align}
        for any set $K \subset \R^{2d}$ by the triangle inequality. The first and last terms may be estimated in the same way as in \eqref{eq:deep_m_cont_estimate} and upon choosing $K$ large enough so that $\Vert m \chi_{K^c} \Vert_{W(C_0, \ell^1)}$ is sufficiently small, we see that we only need to estimate the middle term of \eqref{eq:gabor_cont_triangle_compact_supp_mask} where we can replace $m \chi_K$ by a new $m$ with compact support.

        Now since $m$ has compact support, the sum
        \begin{align*}
            \big| \mu_{\alpha_n, \beta_n}^m(f) - \mu_{\alpha, \beta}^m(f) \big| \leq \sum_{k,j \in \Z^d} \big|m(k\alpha_n, j\beta_n) f(k\alpha_n, j\beta_n) \alpha_n^d \beta_n^d - m(k\alpha, j\beta) f(k\alpha, j\beta) \alpha^d \beta^d\big|
        \end{align*}
        has a finite number of terms and so to show that it goes to zero as $n \to \infty$, it suffices to show that each term goes to zero as $n \to \infty$. This is however easy to see by the continuity of $m$ and $f$ and consequently, we are done.
    \end{proof}
    For a comparable earlier result in \cite[Theorem 3.3]{fe03-5}, we had to assume $m \in W(C_0, \ell^2)(\R^{2d})$ and as a result obtained $\mathcal{S}^2$ convergence. In our setting, requiring $m \in W(C_0, \ell^1)(\R^{2d})$ is more natural as Corollary \ref{corollary:measure_weak_implies_s1} only works for $\mathcal{S}^1$ convergence which requires the mask to be a bounded measure or integrable. It should also be noted that the continuity assumption also is natural as the lattice setting means that any discontinuities in the mask may be picked up for some parameters but not all which can change the operator norm. In \cite[Theorem 5.6.7]{Feichtinger2003} trace-class convergence was proved but the mask was assumed to be in $S_0(\R^{2d})$ and then windows were not allowed to vary. The relaxing of conditions compared to the earlier results should largely be attributed to the alternative path: instead of  performing the proof at the level of operator symbol we use methods from quantum harmonic analysis and the framework laid out in \cite{Feichtinger2022}. 
    
    In the interest of clarity, we again highlight the rank-one case of Gabor multipliers.
    \begin{corollary}
        Let $(\alpha_n)_n$ and $(\beta_n)_n$ be sequences converging to $\alpha, \beta > 0$ respectively, $(m_n)_n$ a sequence in the Wiener amalgam space $W(C_0, \ell^1)(\R^{d})$ converging to $m$ and $(\varphi_n)_n$ a sequence of $L^2(\R^d)$ windows converging to $\varphi \in L^2(\R^d)$. Then for the associated  Gabor multipliers we have the convergence
        \begin{align*}
            G_{m_n, \alpha_n, \beta_n}^{\varphi_n} \to G_{m, \alpha, \beta}^\varphi \quad \text{ in }\mathcal{S}^1 \text{ as }n \to \infty.
        \end{align*}
    \end{corollary}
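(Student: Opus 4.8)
The plan is to obtain this corollary as the rank-one specialization of the preceding theorem. I would set $S_n = \varphi_n \otimes \varphi_n$ and $S = \varphi \otimes \varphi$ and observe that, by the definition \eqref{eq:gabor_multiplier_def}, one has $G_{m, \alpha, \beta}^\varphi = \mu_{\alpha, \beta}^m \star (\varphi \otimes \varphi) = G_{m, \alpha, \beta}^{\varphi \otimes \varphi}$, so the Gabor-multiplier convergence asserted here is literally the mixed-state statement of the theorem applied to these rank-one window operators. The hypotheses on $(\alpha_n)_n$, $(\beta_n)_n$ and $(m_n)_n$ carry over verbatim, so the only genuine verification needed is that $\varphi_n \to \varphi$ in $L^2(\R^d)$ forces $\varphi_n \otimes \varphi_n \to \varphi \otimes \varphi$ in $\mathcal{S}^1$.

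To check that, I would use that the rank-one operator $\psi \otimes \phi$ has a single singular value, so that $\Vert \psi \otimes \phi \Vert_{\mathcal{S}^1} = \Vert \psi \Vert \, \Vert \phi \Vert$, together with the decomposition
\begin{align*}
    \varphi_n \otimes \varphi_n - \varphi \otimes \varphi = (\varphi_n - \varphi) \otimes \varphi_n + \varphi \otimes (\varphi_n - \varphi).
\end{align*}
The triangle inequality in $\mathcal{S}^1$ then gives
\begin{align*}
    \big\Vert \varphi_n \otimes \varphi_n - \varphi \otimes \varphi \big\Vert_{\mathcal{S}^1} \leq \Vert \varphi_n - \varphi \Vert \, \Vert \varphi_n \Vert + \Vert \varphi \Vert \, \Vert \varphi_n - \varphi \Vert,
\end{align*}
and the right-hand side tends to zero since $\Vert \varphi_n - \varphi \Vert \to 0$ while $\Vert \varphi_n \Vert$ stays bounded (being a convergent sequence of norms). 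In particular $(\varphi_n \otimes \varphi_n)_n$ is a sequence of trace-class operators converging to $\varphi \otimes \varphi$ in $\mathcal{S}^1$, as required by the theorem.

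With this in hand, an application of the preceding theorem with $S_n = \varphi_n \otimes \varphi_n$ and $S = \varphi \otimes \varphi$ immediately yields $G_{m_n, \alpha_n, \beta_n}^{\varphi_n} \to G_{m, \alpha, \beta}^\varphi$ in $\mathcal{S}^1$ as $n \to \infty$, which is the claim. I do not expect any real obstacle here: the corollary is essentially a translation of the theorem into the rank-one language, and the single non-trivial ingredient is the elementary continuity of the map $\varphi \mapsto \varphi \otimes \varphi$ from $L^2(\R^d)$ into $\mathcal{S}^1$, handled by the bilinear estimate above.
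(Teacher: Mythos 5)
Your proposal is correct and is exactly the argument the paper intends: the corollary is stated without proof as the rank-one specialization of the preceding theorem, and the only point needing verification is that $\varphi_n \to \varphi$ in $L^2(\R^d)$ implies $\varphi_n \otimes \varphi_n \to \varphi \otimes \varphi$ in $\mathcal{S}^1$, which your decomposition $(\varphi_n - \varphi) \otimes \varphi_n + \varphi \otimes (\varphi_n - \varphi)$ together with $\Vert \psi \otimes \phi \Vert_{\mathcal{S}^1} = \Vert \psi \Vert \, \Vert \phi \Vert$ handles correctly.
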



    \printbibliography
	
\end{document}